\newcommand{\phix}{\ensuremath{{\phi(\cdot)}}}
\newcommand{\tildeA}{\widetilde{A}}
\newcommand{\tp}{\widetilde{\phi}}
\numberwithin{equation}{section}
\numberwithin{theorem}{section}
\begin{document}

\author[A.\ Cianchi]{Andrea Cianchi}
\address{Dipartimento di Matematica e Informatica \lq\lq U. Dini"\\
Universit\`a di Firenze\\
Viale Morgagni 67/a\\
50134 Firenze\\
Italy} 
\email{andrea.cianchi@unifi.it}

\author[L.\ Diening]{Lars Diening}
\address{Fakult\"at für Mathematik, University Bielefeld,
Universit\"atsstrasse 25, 33615 Bielefeld, Germany}
\email{lars.diening@uni-bielefeld.de}

\subjclass[2000]{46E35, 46E30}

\keywords{Musielak-Orlicz spaces, Sobolev inequalities, generalized Young functions, Riesz potentials}

\title{Sobolev embeddings in Musielak-Orlicz spaces
}

\begin{abstract}
 An embedding theorem for Sobolev spaces built upon general Musielak-Orlicz norms is offered. These norms are defined in terms of generalized Young functions which also depend on the $x$ variable. Under minimal conditions on the latter dependence, a Sobolev conjugate is associated with any function of this type. 
Such a conjugate is sharp, in the sense that, for each fixed $x$, it agrees with the sharp Sobolev conjugate in classical Orlicz spaces. Both Sobolev inequalities in the whole $\RRn$   and   Sobolev-Poincar\'e inequalities in domains are established. Compact Sobolev embeddings are also presented. In particular,  optimal embeddings for standard Orlicz-Sobolev spaces,   variable exponent Sobolev spaces, and double-phase Sobolev spaces are recovered and complemented in borderline cases. A key tool, of independent interest, in our approach is a new weak type inequality for Riesz potentials in Musielak-Orlicz spaces involving a sharp fractional-order Sobolev conjugate.
\end{abstract}
\maketitle

\section{Introduction}
Among the diverse extensions of the classical Lebesgue spaces $L^p(\Omega)$ on a measurable set $\Omega \subset \RRn$,   the variable exponent spaces $L^{p(\cdot)}(\Omega)$ and the Orlicz spaces $L^A(\Omega)$ have a prominent position. Loosely speaking, the former are obtained by replacing the function $t^p$  with a function $t^{p(x)}$, where the exponent $p(x)$   may depend on the space variable $x$. The latter surface when the role of the power $t^p$ is played by a more general Young function $A(t)$, namely a nonnegative convex function vanishing at $0$. 

These two families of spaces are distinguished members of the class of Musielak-Orlicz spaces $L^{\phi (\cdot)}(\Omega)$, also known as generalized Orlicz spaces. They are built upon generalized Young functions $$\phi : \RR \times [0, \infty) \to [0, \infty].$$ These functions are measurable in the variable $x\in \RRn$ for each fixed $t\in [0, \infty)$, and are Young functions in the  variable  $t\in [0, \infty)$ for a.e. fixed  $x\in \RRn$.

Accordingly, when $\Omega$ is open, the  homogeneous Musielak-Orlicz-Sobolev spaces $V^{1,\phi (\cdot)}(\Omega)$
 generalize the standard Sobolev spaces and consist of those functions whose weak derivatives belong to   $L^{\phi (\cdot)}(\Omega)$. 

The theory of Musielak-Orlicz spaces and of their Sobolev counterparts has been considerably developed in recent years. 
They were systematically introduced in \cite{Mus83}, although they already appear in the earlier contribution \cite{Nak50}. Recent comprehensive monographs on this topic are \cite{HarHas19book, MenLan19}.
Special classes of Musielak-Orlicz spaces are the subject of the books \cite{Cruz13, DieHarHasRuz17}. The interest in these spaces is especially
motivated by applications to the study of partial differential equations driven by nonstandard nonlinearities. A sample of results on the regularity of solutions to elliptic equations and variational problems in Musielak-Orlicz-Sobolev spaces can be collected from the papers \cite{Baroni18, berselli16, schw15, Byun22, Chl21, colombo15,  De22, Ho23, Ok22}  and 
the book \cite{ChlGwiSwieWro21}.

Despite extensive investigations on Musielak-Orlicz spaces, the picture of Sobolev-type embeddings  -- a fundamental tool in the applications mentioned above --   is still incomplete.  
A sharp embedding theorem is available for classical Orlicz-Sobolev spaces, associated with functions $\phi$ independent of $x$, and hence of the form
\begin{align}\label{A}
\phi(x,t) =A(t)
\end{align}
for some Young function $A$. This embedding is established in \cite{cianchi1, cianchi2} and is of use in the analysis of partial differential equations whose nonlinearities are not polynomial.

 An optimal embedding for functions $\phi$ of plain variable exponent type, namely  functions obeying
\begin{align}\label{variable}
\phi(x,t) =t^{p(x)},
\end{align}
is also well-known.   
Such an embedding was first proved in \cite{Diening04} and extended in \cite{Ed00, Ed02}. Its most general form was established in \cite{HarHas08Sobolev1n}, and can also be found in  \cite[Section 8.3]{DieHarHasRuz17}. This embedding is known to hold if the function $p(\cdot)$ satisfies a local log-H\"older continuity property, as well as a logarithmic decay assumption near infinity. On the other hand, 
it may fail if these assumptions are dropped, as demonstrated by examples from \cite[Proposition 8.3.7]{DieHarHasRuz17}.
Variable exponent Sobolev spaces enter various mathematical models of physical phenomena, such as the theory of
electrorheological fluids.

Sobolev embeddings for Musielak-Orlicz spaces associated with generalized Young functions  defined as
\begin{align}\label{double}
 \phi(x,t) = t^p + a(x)t^q
\end{align} 
are obtained in \cite{Ho23} under suitable hypotheses on the function $a(\cdot)$.  
Functions of this kind have a role in the formulation of so-called double-phase variational problems, which have been the object of renewed attention in recent years.

Embeddings for Musielak-Orlicz-Sobolev spaces built upon generalized Young functions $\phi$ of other specific forms have been considered in various contributions. Results for functions $\phi$ of general type can also be found in the literature -- see e.g. \cite{CruHas18extrapolation, Fan12embedding-musielak, HarHas08Sobolev1n, HarHas17riesz, HarHas19book, MiTaShi23}. However, they require unessential restrictions on $\phi$  and need not yield optimal conclusions. In particular, the  result of 
\cite{Fan12embedding-musielak} 
necessitates a Lipschitz dependence of $\phi$ in the $x$ variable, and, more importantly, in the case of functions $\phi$ independent of $x$ it just reproduces the non-sharp Sobolev embedding from \cite{Don71}.

The purpose of the present paper is to offer an embedding theorem for general Musielak-Orlicz-Sobolev spaces.
The Sobolev conjugate $\phi_n$ of 
any generalized Young function $\phi$ satisfying some natural assumptions on the dependence on the $x$-variable is exhibited. 
Such a Sobolev conjugate is sharp,  in the sense that the receipt that yields $\phi_n(x,\cdot)$ from $\phi(x,\cdot)$, for each fixed $x$, is exactly the same that produces the optimal Sobolev conjugate of a classical Young function, independent of $x$, in the Orlicz-Sobolev embedding.
The function $\phi_n$ is a generalized Young function   such that 
$$ V^{1,\phi (\cdot)}_{d}(\RRn) \embedding L^{\phi_n (\cdot)}(\RRn).$$
Namely, the inequality
$$
\|u\|_{L^{ \phi_n(\cdot)}(\RRn)}\leq c \|\nabla u\|_{L^{\phi(\cdot)}(\RRn)}
$$
holds for    some constant $c$ and for every $u \in V^{1,\phi(\cdot)}_{d}(\RRn)$,
Here, the arrow $\lq\lq \embedding "$ stands for continuous embedding, and $V^{1,\phi (\cdot)}_{d}(\RRn)$ denotes the space of those functions $u\in V^{1,\phi (\cdot)}(\RRn)$ which decay near infinity, in the sense that the Lebesgue measure of the level set $\{|u|>t\}$ is finite for every $t>0$. The assumptions to be imposed on the function $\phi$ are standard in the current theory of Musielak-Orlicz-Sobolev spaces, as 
recorded in \cite{HarHas19book}. They
 amount to the non-degeneracy and non-singularity in the $x$ variable for $t=1$,  to quantitative information on local oscillation in the $x$ variable, and to a suitable asymptotic behavior as $|x|\to \infty$.  These assumptions are a natural extension of those customarily requested in the basic case of variable exponent Sobolev spaces.

 Sobolev-Poincar\'e type inequalities, and ensuing Sobolev embeddings, on open sets $\Omega \subset \RRn$  with finite measure are also offered. Because of the finiteness of the ground domain, only the behavior of the generalized Young functions defining the Musielak-Orlicz norms is relevant in these inequalities. The Sobolev conjugate of $\phi$ can thus be constructed after modifying $\phi$ near $0$. This enables one to avoid unnecessary decay conditions on $\phi$.
The resultant Sobolev conjugate is denoted by~$\phi_{n,\diamond}$. 
The Sobolev-Poincar\'e  inequality
in
$V^{1,\phi(\cdot)}_0(\Omega)$, the subspace of those functions from $V^{1,\phi(\cdot)}(\Omega)$ which vanish, in a suitable sense, on $\partial \Omega$, reads
$$
\|u\|_{L^{  \phi_{n, \diamond}(\cdot)}(\Omega)}\leq c \|\nabla u\|_{L^{\phi(\cdot)}(\Omega)}
$$
 for  some constant $c$ and every  $u \in V^{1,\phi(\cdot)}_0(\Omega)$.
\\ A parallel mean-value Sobolev-Poincar\'e  inequality of the form
$$
\|u-u_\Omega\|_{L^{  \phi_{n, \diamond}(\cdot)}(\Omega)}\leq c \|\nabla u\|_{L^{\phi(\cdot)}(\Omega)}
$$
holds in sufficiently regular bounded domains $\Omega$, for  some constant $c$ and every  $u \in V^{1,\phi(\cdot)}(\Omega)$.  Here, $u_\Omega$ stands for the mean value of $u$ over $\Omega$. 
Bounded John domains $\Omega$ are admissible, for instance. Recall that the class of John domains includes, in particular, Lipschitz domains and domains with the interior cone property.
\\ Compact embeddings for the spaces $V^{1,\phi(\cdot)}_0(\Omega)$ and $V^{1,\phi(\cdot)}(\Omega)$ are discussed as well.

Our embeddings recover the optimal known results for specific choices of the function $\phi$. In particular, they reproduce the Sobolev embeddings for functions $\phi$ as in \eqref{A}, \eqref{variable}, and \eqref{double}, and also allow for critical and supercritical values of parameters associated with these functions.

A major dissimilarity between Musielak-Orlicz and classical Orlicz spaces is that, unlike the latter, the former are not rearrangement-invariant. This drawback makes some powerful tools in dealing with Sobolev type inequalities in Orlicz spaces, such as symmetrizations and rearrangement-based interpolation techniques, useless in the Musielak-Orlicz environment. 
As in most of the available proofs of Sobolev embeddings for spaces from the  Musielak-Orlicz class, we  resort to an approach relying on a 
 weak-type estimate for the Riesz potential operator $I_\alpha$ in the same spaces. A critical step in our proof amounts to a new estimate of this kind, which
takes the form:
$$
\int_{\{|I_\alpha f|>t\}}\phi_{\frac n\alpha}(x,ct)\, dx \leq \int_{\RRn}\phi(x, |f(x)|)\, dx
$$
for every (suitably normalized) generalized Young function $\phi$, for some positive constant $c$ and for every function $f$ such that $\|f\|_{L^{\phi(\cdot)}(\RRn)}\leq 1$. Here, $\alpha \in (0,n)$,  and $\phi_{\frac n\alpha}$ is the sharp Sobolev conjugate of $\phi$ of order $\alpha$. 
The function  $\phi_{\frac n\alpha}$ agrees with $\phi_n$ if $\alpha =1$, and reproduces, for each fixed $x$, the Sobolev conjugate for optimal weak-type inequalities for  $I_\alpha$ in Orlicz spaces exhibited in \cite{cianchi_JLMS}. Moreover,
our estimate includes, as a special case,  a well-known inequality in variable exponent Lebesgue spaces \cite{Diening04}.  It also augments weak-type inequalities for Riesz potentials in Musielak-Orlicz spaces contained in \cite{CruHas18extrapolation, HarHas08Sobolev1n, HarHas17riesz}.
 
Thanks to a representation formula for Sobolev functions in terms of Riesz potentials of order $\alpha =1$, the weak-type inequality for these potentials implies a parallel weak-type Sobolev inequality in  Musielak-Orlicz spaces. A nowadays standard discretization-truncation argument introduced by V.Maz'ya \cite{Ma61, Mabook} then upgrades the weak-type Sobolev inequality to a strong one.  The relevant representation formula, though classical for compactly supported functions,  cannot apparently be found in the literature for functions decaying near infinity in the very weak sense specified above. Its proof requires some additional argument and it is hence stated in a separate lemma, of possible independent interest.

\section{Generalized Young functions and Musielak-Orlicz spaces}\label{background}

This section is devoted to basic notions about generalized Young functions and the associated Musielak-Orlicz spaces. 
We rely upon the modern foundations of the theory of these spaces developed in the monographs \cite{DieHarHasRuz17,HarHas19book}, to which we refer the reader for more details.

A Young function is a function $A\,:\, [0,\infty) \to [0,\infty]$ which is convex, continuous, non-constant and such that $A(0)=0$.  The following representation formula holds
\begin{align*}
  A(t) = \int_0^t a(\tau)\,d\tau,
\end{align*}
where $a: [0, \infty) \to [0, \infty]$ is a non-decreasing  function. 
 \\
By~$A^{-1} : [0, \infty) \to [0, \infty)$ we denote the left-continuous inverse of $A$, given by $$A^{-1}(t) = \inf \set{\tau \geq 0\,:\, A(\tau) \geq t} \quad \text{ for $t \geq 0$.}$$

\smallskip
 \par\noindent
A function $\phi\,:\, \RRn \times [0, \infty) \to [0,\infty]$ is called
 a generalized Young function  if
\begin{enumerate}
\item [(i)] the function $\phi(x,\cdot)$ is a Young function for a.e.  $x \in \RRn$,
\item [(ii)]  the function $\phi(\cdot, t)$ is measurable for every~$t \geq 0$.
\end{enumerate}
Notice that
\begin{equation}\label{oct25}
s\,\phi(x, t) \leq   \phi(x,st) \quad \text{if $s\geq 1$ and $t \geq 0$,}
\end{equation}
for $x\in \RRn$.
\\ For each $x \in \RRn$, we denote by $\phi^{-1}(x, \cdot)$ the  left-continuous inverse of the Young function $\phi(x, \cdot)$. 
One has that
$$\phi(x,\phi^{-1}(x, t))=t\quad \text{and} \quad \phi^{-1}(x, \phi(x,t))\leq t \quad  \text{for $x\in \RRn$ and $t \geq 0$.}$$
The Young conjugate of  a generalized Young function $\phi$ is  denoted by 
$\widetilde \phi$ and defined as 
$$\widetilde \phi (x,t) = \sup\{\tau t - \phi(x,\tau): \tau\geq 0\} \quad \text{for $x\in\RRn$ and  $t\geq 0$.}$$
It is also a generalized Young function and 
\begin{equation}\label{6}
t \leq \phi^{-1}(x, t) \tp^{-1}(x,t) \leq 2t \quad \text{for $x\in \RRn$ and $t\geq 0$.}
\end{equation}
The function $\phi$ is said to satisfy the $\Delta_2$-condition if there exists a constant $c$ such that
\begin{equation}\label{delta2}
\phi(x,2t) \leq c \,\phi (x,t)\quad \text{for $x\in\RRn$ and  $t\geq 0$.}
\end{equation}
In what follows, we write
\begin{equation}\label{oct1} \phi(x,t) \approx \psi (x,t)
\end{equation}
to denote that there exist positive constants $c_1$ and $c_2$ such that $c_1  \phi(x,t) \leq \psi (x,t) \leq c_2  \phi(x,t)$ for $x\in \RRn$ and $t\geq 0$. This relation differs from the equivalence in the sense of Young functions, denoted by
\begin{equation}\label{oct2} \phi(x,t) \simeq \psi (x,t),
\end{equation}
which means that $ \phi(x,c_1 t) \leq \psi (x,t) \leq  \phi(x,c_2t)$. Relations \eqref{oct1} and \eqref{oct2} are said to hold near zero or near infinity if the defining inequalities just hold, uniformly in $x$, for small or large values of $t$, respectively. 
These relations will also be employed between functions that are not generalized Young functions.  Observe that, thanks to property \eqref{oct25}, if $\phi$ and $\psi$ are generalized Young functions such that $\phi \approx \psi$, then  $\phi \simeq \psi$ as well.
{\color{black} \\ We say that the function $\vartheta$   grows essentially more slowly than $\phi$ near infinity in a set $\Omega \subset \RRn$ if
 \begin{align}\label{moreslowly}
      \lim_{t \to \infty} \esssup_{x \in \Omega}\frac{\vartheta(x,c t)}{\phi(x,t)} &= 0 \qquad \text{for every $c>0$.}
  \end{align}}
\smallskip
\par
Let $\Omega$ be a measurable set in $\RRn$. Denote by $L^0(\Omega)$ the set of measurable functions in $\Omega$. 
The   Musielak-Orlicz space~$L^{\phix}(\Omega)$, associated with a generalized Young function  ~$\phi$, is defined as
\begin{align*}
  L^\phix(\Omega)  =\bigg\{ u \in L^0(\Omega)\,:\,  \int_{\Omega} \phi\bigg(x,\frac{\abs{u(x)}}\lambda \bigg)\,dx<\infty \text{ for some } \lambda >0\bigg\}.
\end{align*}
The space $L^{\phix}(\Omega)$ is a Banach space when equipped with the norm
\begin{align}\label{norm}
  \norm{u}_{ L^\phix(\Omega) } = \inf \bigg\{ \lambda > 0\,:\, \int_{\Omega} \phi\bigg(x,\frac{\abs{u(x)}}\lambda \bigg)\,dx \leq 1\bigg\}.
\end{align}
The following H\"older type inequality holds:
\begin{equation}\label{holder}
\int_\Omega |u(x)v(x)|\, dx \leq 2 \|u\|_{L^{\phix}(\Omega)}  \|v\|_{L^{\tp (\cdot)}(\Omega) }
\end{equation}
for $u \in L^{\phix}(\Omega)$ and $v \in L^{\tp (\cdot)}(\Omega) $. Furthermore,
\begin{equation}\label{revholder}
\sup_{u \in L^{\phix}(\Omega)}\frac{\int_\Omega |u(x)v(x)|\, dx}{ \|u\|_{L^{\phix}(\Omega)} }\geq  \|v\|_{L^{\tp (\cdot)}(\Omega) }
\end{equation}
for $v \in L^{\tp (\cdot)}(\Omega)$.

Musielak-Orlicz spaces are, in particular, normed function lattices on $\Omega$. Recall that a normed function lattice $X(\Omega)$  is a normed linear space of functions in $L^0(\Omega)$, endowed with a norm $\|\cdot\|_{X(\Omega)}$ such that
\begin{equation}\label{lattice}
\text{if $|u|\le |v|$ a.e. in $\Omega$, then $\|u\|_{X(\Omega)}\le\|v\|_{X(\Omega)}$.}
\end{equation}

Additional assumptions are needed on generalized Young functions for several classical results from Real and Harmonic Analysis to hold in the associated Musielak-Orlicz spaces. The requirements to be imposed in the model case of variable exponent spaces are thoroughly exposed in the monograph \cite{DieHarHasRuz17}. Their extension to the general  Musielak-Orlicz space depends upon the following definitions, which were introduced in \cite{HarHas19book}.

\begin{definition}
  \label{def:axioms}
  A generalized Young function $\phi$ is said to satisfy condition:
  \begin{enumerate}
  \item[{\rm (A0)}] if there exists~$\beta \in (0,1]$ such that
    \begin{align*}
      \beta \leq \phi^{-1}(x,1) \leq \frac 1 \beta
    \end{align*}
    for a.e.~$x \in \RRn$;
  \item[{\rm (A1)}] if there exists~$\beta \in (0,1]$   such that
    \begin{align*}
      \beta \phi^{-1}(x,t) \leq \phi^{-1}(y,t)
    \end{align*}
    for every $t \in [1, \frac{1}{\abs{B}}]$, for  a.e.~$x,y \in B$, and   every ball~$B \subset \RRn$ with $\abs{B} \leq 1$;
  \item[{\rm (A2)}] if for every~$s>0$ there exist~$\beta \in (0,1]$  and $h \in L^1(\RRn) \cap L^\infty(\RRn)$ such that
    \begin{align*}
      \beta \phi^{-1}(x,t) \leq \phi^{-1}(y,t)
    \end{align*}
    for a.e.~$x,y \in \RRn$ and every $t \in [h(x)+h(y),s]$.
  \end{enumerate}
\end{definition}
Condition~(A0) is usually called the weight condition, (A1) the local continuity condition, and (A2) the decay condition. Note that, in spite of its name, condition (A1) just provides information on the local oscillation of $\phi$ in the $x$ variable, but does not entail its continuity. This terminology is inspired by the special setting of variable exponent functions.

{\color{black} Condition (A0) also ensures that, if $E$ is any set in $\RRn$ with finite Lebesgue measure $|E|$, then $\|\chi_E\|_{L^\phix (\RRn)}<\infty$. Here, $\chi_E$ denotes the characteristic function of $E$.
Thanks to inequalities \eqref{6}, condition (A0),  with $\frac 1\beta$ replaced with $\frac 2\beta$, is inherited by the function $\widetilde \phi$. Thus, as a consequence of inequality \eqref{holder}, 
\begin{equation}\label{nov150}
\int_{E}|u|\,dx <\infty \quad \text{ if}\quad  \|u\chi_E\|_{L^\phix (\RRn)}<\infty.
\end{equation}
In particular, if $\phi$ satisfies (A0) and $|\Omega|<\infty$, then
\begin{equation}\label{nov151}
L^\phix(\Omega) \embedding L^1(\Omega).
\end{equation}

The function $\phi_\infty : [0, \infty) \to [0, \infty]$, associated with a generalized Young function $\phi$ by
\begin{equation}\label{phiinf}
\phi_\infty (t) = \limsup_{|x|\to \infty} \phi(x,t) \qquad \text{for $t \geq 0$,}
\end{equation}
plays a role in equivalent formulations of condition (A2). If $\phi_\infty$ is non-degenerate, in the sense that it is neither constantly equal to $0$ nor to $\infty$, then it is a Young function. This is certainly the case when $\phi$ satisfies condition~(A0).
 \\ As shown in \cite[Lemma~4.2.9]{HarHas19book} and \cite{HH2023arXiv}, 
for a generalized Young function $\phi$ satisfying condition (A0),  the following conditions  are equivalent to (A2).

\begin{definition}
  \label{def:axioms'} 
 A generalized Young function  $\phi$ is said to satisfy condition:
\begin{enumerate}
\item[{\rm (A2')}] if 
  \begin{align*}
    L^\phix(\RRn) \cap L^\infty(\RRn) = L^{\phi_\infty}(\RRn) \cap L^\infty(\RRn),
  \end{align*}
  with equivalence of norms, where   $\phi_\infty$ is the Young function defined by \eqref{phiinf};
\item[{\rm (A2'')}] if there exist $\beta \in (0,1]$ and  $h \in L^1(\RRn) \cap L^\infty (\RRn)$  such that for a.e. $x\in \RRn$
  \begin{align*}
 \phi(x, \beta t) \leq \phi_\infty (t) + h(x) \,\, \text{if $\phi_\infty (t) \in [0,1]$} \quad 
\text{and} \quad 
 \phi_\infty(\beta t) \leq \phi(x,t) + h(x) \,\,\text{if $\phi (x,t) \in [0,1]$.}
  \end{align*}
\item[{\rm (A2''')}]  if for every $\sigma >0$ there exist $\beta \in (0,1]$ and $h \in L^1(\RRn) \cap L^\infty(\RRn)$, with $h \geq 0$, such for a.e. $x, y \in \RRn$
\begin{align*}
\beta \phi^{-1}(x,t) \leq \phi^{-1}(y,t + h(x) +h(y)) \quad  \text{if $t \in [0, \sigma]$.}
\end{align*}
\end{enumerate}
\end{definition}}

Let us briefly discuss the customary generalized Young functions defined as in \eqref{variable} and \eqref{double}
in connection with properties (A0), (A1) and (A2) introduced above. We refer the reader to \cite{HarHas19book} for additional examples. 

\begin{example}{\rm\bf{ [Variable exponents]}} \label{exvariable}{\rm
The best known instances of generalized Young functions, depending on the $x$-variable in a non trivial way, are provided by variable powers, namely 
 functions given by
\begin{equation}\label{var0}
\phi(x,t) = t^{p(x)} \quad \text{for  $x\in \RRn$ and $t \geq0$,}
\end{equation}
where
$$p\,:\, \RRn \to [1,\infty).$$  
They are at the basis of the definition of the variable exponent Lebesgue spaces, whose systematic analysis was initiated in the paper  \cite{Ko91}.
\\ 
 If the function $p$ satisfies a $\log$-H\"older continuity continuity property of the form
\begin{equation}\label{var1}
 \biggabs{\frac{1}{p(x)} - \frac{1}{p(y)}} \leq \frac{c}{\log(e+ \frac{1}{\abs{x-y}})} \quad \text{for  $x,y \in \RRn$,}
\end{equation}
for some constant $c$,
then $\phi$
is a generalized Young function fulfilling properties (A0) and  (A1) -- see  \cite[Section 7.1]{HarHas19book}. 
\\ If, in addition, 
 the logarithmic decay condition near infinity
\begin{equation}\label{var2}
  \biggabs{\frac{1}{p(x)} - \frac{1}{p_\infty}} \leq \frac{c}{\log(e+ \abs{x})} \quad \text{for  $x\in \RRn$,}
\end{equation}
holds for some constants $p_\infty \in [1, \infty)$ and  $c>0$, then the function $\phi$ also satisfies property (A2).  In particular,   under assumptions \eqref{var1} and \eqref{var2}, we have that $p\in L^\infty(\RRn)$.
\\ Observe that condition \eqref{var2} implies Nekvinda's condition introduced in \cite{Nek04}. 
\\
The same assertions are true for the variant given by
$$\phi(x,t) =  \tfrac{1}{p(x)} t^{p(x)}\quad \text{for  $x\in \RRn$ and $t \geq0$,}$$
which is of common use in the formulation of variational problems associated with variable exponent functionals. 
}
\end{example}

\begin{example}{\rm\bf{ [Double-phase]}} \label{doubleex}{\rm Another popular class of generalized Young functions consists of so-called double-phase Young functions. They have the form
\begin{equation}\label{double1}
\phi(x,t)= t^p + a(x)t^q \quad \text{for  $x\in \RRn$ and $t \geq0$,}
\end{equation}
for some exponents $q >p \geq 1$ and for a nonnegative function $a\in L^\infty(\RRn)$. As shown in \cite[Section~7.2]{HarHas19book},
 a function $\phi$ of this kind satisfies properties (A0) and (A2), with $h=0$. It  also fulfills property (A1) if and only if
\begin{equation}\label{double5}
a(x) \leq c\Big(a(y) + |x-y|^{\frac{n}{p}(q-p)}\Big)\quad \text{for  $x,y\in \RRn$,}
\end{equation}
for some constant $c>0$.  In particular, condition \eqref{double5} holds, provided that
\begin{equation}\label{double2}
q \leq \frac{np}{n-1},
\end{equation}
 and 
 \begin{equation}\label{double3}
a \in C^{0,\frac{n}{p}(q-p)}(\RRn).
\end{equation}
 An alternate version of the function \eqref{double1}, which still enjoys properties (A0), (A1), and (A2) under the same assumptions, is obtained by replacing the sum with the maximum on the right-hand side of \eqref{double1}. This results in the function 
\begin{equation}\label{double4}
 \phi(x,t)= \max\{t^p, a(x)t^q\} \quad \quad \text{for  $x\in \RRn$ and $t \geq0$.}
\end{equation}
  A combination of the functions from \eqref{var0} and \eqref{double1} yields variable exponent double-phase functions given by
\begin{equation}\label{doublevar1}
\phi(x,t)= t^{p(x)} + a(x)t^{q(x)} \quad \quad \text{for  $x\in \RRn$ and $t \geq0$.}
\end{equation}
They have been examined, also in connection with conditions (A0), (A1), and (A2), in the papers  \cite{crespo22} and \cite{Ho23}.}
\end{example}

\smallskip
A handy subclass of generalized Young functions consists of normalized functions according to the next definition patterned on   \cite{HarHas17riesz}.
\begin{definition}\label{normalized} A  generalized Young function $\phi$  will be called normalized   if:
\\ (i)
    \begin{align}\label{july3}
       \phi^{-1}(x,1)=1 \quad \text{for a.e. $x\in \RRn$;}
    \end{align}
\\ (ii)
 there exists~$\beta \in (0,1]$   such that
    \begin{align}\label{july4}
      \beta \phi^{-1}(x,t) \leq \phi^{-1}(y,t)
    \end{align}
    for every $t \in [0, \frac{1}{\abs{B}}]$,  for  a.e.~$x,y \in B$, and   every ball~$B \subset \RRn$;
 \\ (iii)
    \begin{align}\label{july5}
        \phi(x,t) =       \phi _\infty(t) \quad \text{for $t\in [0,1]$.}
    \end{align}
\end{definition}

The following properties of normalized generalized Young functions follow from their definition, via inequalities \eqref{6}.

\begin{proposition}\label{feb1}
Assume  that   $\varphi$ is a normalized generalized Young function, and  let $\beta\in (0,1]$ be the constant appearing in Definition  \ref{normalized}.  Then:
\\ (i) \begin{equation}\label{feb2}
  \phi (x,t) \leq   \phi (y,\tfrac t\beta)
\end{equation}
for  every $t \in [0,\beta \phi^{-1}(y, \abs{B}^{-1})]$,  for  a.e.~$x,y \in B$, and   every ball~$B \subset \RRn$;
 \\ (ii)
\begin{equation}\label{feb3}
\tfrac \beta 2 \widetilde \phi ^{-1}(x,t) \leq \widetilde \phi ^{-1}(y,t)
\end{equation}
for every $t \in [0, \abs{B}^{-1}]$, for  a.e.~$x,y \in B$, and   every ball~$B \subset \RRn$;
\\ (iii)
\begin{equation}\label{25}
  \widetilde \phi (x,t) \leq   \widetilde\phi (y,\tfrac 2\beta t)
\end{equation}
for every $t \in [0,\frac \beta 2 \widetilde \phi^{-1}(y, \abs{B}^{-1})]$, for  a.e.~$x,y \in B$, and   every ball~$B \subset \RRn$.
\end{proposition}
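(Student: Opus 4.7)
The plan is to deduce all three statements from the normalization condition \eqref{july4}, from the inverse identities $\phi(x,\phi^{-1}(x,s))=s$ and $\phi^{-1}(x,\phi(x,s))\leq s$ recalled in the excerpt, and from the two-sided bound \eqref{6}. Since $\widetilde\phi$ is itself a generalized Young function, the same identities apply to it, so (iii) will follow from (ii) by exactly the argument that derives (i) from \eqref{july4}.

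For (i), I would first interchange $x$ and $y$ in \eqref{july4} to obtain $\beta\phi^{-1}(y,s)\leq \phi^{-1}(x,s)$ for $s\in[0,|B|^{-1}]$. Given $t\in[0,\beta\phi^{-1}(y,|B|^{-1})]$, this exchanged inequality at $s=|B|^{-1}$ yields $t\leq \phi^{-1}(x,|B|^{-1})$, and hence $s:=\phi(x,t)$ lies in $[0,|B|^{-1}]$ by monotonicity and the identity $\phi(x,\phi^{-1}(x,|B|^{-1}))=|B|^{-1}$. Applying the exchanged normalization inequality at this $s$, together with $\phi^{-1}(x,\phi(x,t))\leq t$, gives $\phi^{-1}(y,s)\leq t/\beta$. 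Since $\phi(y,\cdot)$ is non-decreasing and $\phi(y,\phi^{-1}(y,s))=s$, I conclude $\phi(x,t)=s\leq \phi(y,t/\beta)$, which is \eqref{feb2}.

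For (ii), I would combine three estimates: $\widetilde\phi^{-1}(y,t)\geq t/\phi^{-1}(y,t)$ from \eqref{6}, $\phi^{-1}(y,t)\leq \phi^{-1}(x,t)/\beta$ from the swapped form of \eqref{july4} on $[0,|B|^{-1}]$, and $\phi^{-1}(x,t)\leq 2t/\widetilde\phi^{-1}(x,t)$ again from \eqref{6}. Chaining these gives $\widetilde\phi^{-1}(y,t)\geq \beta t/\phi^{-1}(x,t)\geq (\beta/2)\,\widetilde\phi^{-1}(x,t)$, which is \eqref{feb3}. Part (iii) then follows by repeating the argument of (i) verbatim with $\phi$ replaced by $\widetilde\phi$ and $\beta$ replaced by $\beta/2$, using (ii) in place of \eqref{july4}; the required range $t\in[0,(\beta/2)\widetilde\phi^{-1}(y,|B|^{-1})]$ ensures, via the swapped form of (ii) at $s=|B|^{-1}$, that $\widetilde\phi(x,t)\leq|B|^{-1}$, so the normalization applies at the intermediate value.

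None of these steps presents a genuine obstacle; the only thing requiring care is tracking the admissible range of the argument through each substitution so that the normalization condition, restricted to $t\in[0,|B|^{-1}]$, remains applicable at every stage.
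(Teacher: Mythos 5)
Your proof is correct and uses precisely the ingredients the paper cites (the normalization condition \eqref{july4}, the inverse identities for Young functions, and the two-sided bound \eqref{6}); the paper itself only asserts that the claims ``follow from their definition, via inequalities \eqref{6},'' without giving details, and your argument is the natural filling-in of that sketch. In particular, your bootstrapping of (iii) from (ii) by rerunning the argument for (i) with $\widetilde\phi$ and $\beta/2$ is exactly the intended route.
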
 

A useful result from \cite{HarHas17riesz} ensures that any  generalized Young function $\phi$ satisfying conditions 
(A0), (A1), and (A2) can be replaced with a normalized generalized Young function $\overline \phi$ without altering the associated Musielak-Orlicz space, up to equivalent norms. The recipe is the following. First, 
define the generalized Young function $\phi_0 : \RRn \times [0, \infty) \to [0, \infty]$ as 
\begin{align}\label{july8}
\phi_0(x,t)=
 \max\big\{\phi\big(x,\phi^{-1}(x,1)t\big), 2t-1\big\} \quad \text{for $x\in \RRn$ and $t \geq 0$.}
\end{align}
Then, the function  $\overline \phi : \RRn \times [0, \infty) \to [0, \infty]$ is given by
\begin{align}\label{july7}
      \overline \phi(x,t)= \begin{cases} 2\phi_0(x,t)-1 \quad &\text{if $t \geq 1$}
\\ \limsup_{|x|\to \infty} \phi_0(x,t) \quad &\text{if $0\leq t <1$,}
\end{cases}
\end{align}
for $x\in \RRn$. 

\begin{proposition}
{\rm\bf{ \cite[Proposition 4.2]{HarHas17riesz}}}
 \label{replacephi} Let $\phi$ be a generalized Young function satisfying conditions 
(A0), (A1) and (A2).  Then, the function $\overline \phi$ defined by \eqref{july7} is a normalized generalized Young function, which satisfies condition \eqref{july4} with the same $\beta$ as in condition (A1) for $\phi$. Moreover, 
   \begin{align} \label{july6}
L^{\phix}(\RRn) = L^{\overline {\phi}(\cdot)}(\RRn),
    \end{align}
up to equivalent norms,   with equivalence constants depending on the constant $\beta$ from conditions (A0), (A1), and on the function $h$ from condition (A2).
\end{proposition}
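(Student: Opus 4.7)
My plan is to verify, in turn, that $\overline\phi$ is a generalized Young function, that it satisfies the three conditions (i), (ii), (iii) of Definition~\ref{normalized}, and finally that $\|\cdot\|_{L^{\phix}(\RRn)}$ and $\|\cdot\|_{L^{\overline\phi(\cdot)}(\RRn)}$ are equivalent. The construction \eqref{july8}--\eqref{july7} has three effects that I would analyze separately: the rescaling $t\mapsto \phi^{-1}(x,1)\,t$ inside $\phi_0$ forces the value of the inverse at $1$ to equal $1$; the max with $2t-1$ combined with the subsequent doubling $2\phi_0-1$ is a technical device that pins down $\overline\phi(x,1)=1$ and patches the two pieces of $\overline\phi$ continuously at $t=1$; and passing to $\limsup_{|x|\to\infty}$ on $[0,1)$ removes the $x$-dependence exactly where condition (iii) demands it.

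First I would check that $\phi_0$ is a generalized Young function: the auxiliary map $\psi(x,t):=\phi(x,\phi^{-1}(x,1)\,t)$ is measurable in $x$ and is, for a.e.\ $x$, a Young function in $t$, and so is its pointwise max with the convex affine function $2t-1$ (the max vanishes at $t=0$ since $\psi(x,0)=0$ dominates $-1$). Hence $\overline\phi$, being either $2\phi_0-1$ or a pointwise limsup of $\phi_0$ depending on the range of $t$, is also a generalized Young function; continuity at $t=1$ follows from $\psi(x,1)=\phi(x,\phi^{-1}(x,1))=1$, which gives $\phi_0(x,1)=\max\{1,1\}=1$, whence both definitions of $\overline\phi$ match at $t=1$ to the common value $1$. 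This in turn yields $\overline\phi^{-1}(x,1)=1$, which is condition (i) of Definition~\ref{normalized}. Condition (iii) is immediate by construction, since on $[0,1)$ the function $\overline\phi$ is defined as a limsup and is therefore independent of $x$.

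The main technical step is condition (ii), i.e.\ the oscillation estimate \eqref{july4} on $[0,1/\abs{B}]$. For $t<1$ there is nothing to prove, as $\overline\phi^{-1}(\cdot,t)$ does not depend on $x$. For $t\in[1,1/\abs{B}]$ I would apply condition (A1) for $\phi$ on the ball $B$, invert both sides of $\beta\phi^{-1}(x,t)\leq\phi^{-1}(y,t)$, and undo the rescaling by $\phi^{-1}(\cdot,1)$; the latter factor is bounded above and below by condition (A0) and so can be absorbed into the constant, leaving the same $\beta$ as in (A1). The max with $2t-1$ and the doubling $2\phi_0-1$ are pointwise equivalent to $\phi_0$ in the sense of \eqref{oct2} on $t\geq 1$, so their inverses carry only multiplicative corrections that disappear into the additive constants of (A1). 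For the norm equivalence I would chain the comparisons $\phi(x,t)\simeq \psi(x,t)\simeq \phi_0(x,t)\simeq 2\phi_0(x,t)-1$ on $t\geq 1$ (from (A0) and the elementary algebraic manipulations above), while the identification of $\phi_0(x,t)$ with $\limsup_{|y|\to\infty}\phi_0(y,t)$ on $[0,1)$, up to equivalent norms, is exactly the content of the reformulation (A2') of (A2) recorded after Definition~\ref{def:axioms'}. I expect the main obstacle to be the bookkeeping of constants in the argument for condition (ii): verifying that the max with $2t-1$ and the doubling $2\phi_0-1$ do not force any enlargement of $\beta$ requires careful handling of inverses of maxima and of the uniform two-sided bound on $\phi^{-1}(x,1)$ supplied by (A0); the remaining items are essentially routine manipulations of Young functions.
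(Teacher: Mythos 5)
The paper does not actually prove this proposition: it imports it verbatim as \cite[Proposition~4.2]{HarHas17riesz}, so there is no internal argument to compare against; I will assess your sketch on its own terms.

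Your overall plan is sound and is probably close in spirit to what the cited reference does: verify (i) and (iii) directly from the construction, treat (ii) via condition (A1), and reduce the norm equivalence to (A2'). The verifications of (i) and (iii) are correct, and the key auxiliary bound $\phi_0(y,\tau)\le\tau$ for $\tau\le 1$, which you use implicitly, does follow from convexity and $\phi(y,\phi^{-1}(y,1))=1$. A small omission: you check continuity of $\overline\phi(x,\cdot)$ at the junction $t=1$, but a generalized Young function must also be convex there; this can be salvaged because $\overline\phi(x,\tau)\geq 2\tau-1$ on $[0,1)$ bounds the left-derivative at $1$ by $2$, while $\overline\phi(x,\tau)\geq 4\tau-3$ on $(1,\infty)$ forces the right-derivative to be at least $4$.

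There is a genuine gap in your treatment of condition (ii), precisely where you yourself flagged a worry. When you say the factor $\phi^{-1}(\cdot,1)$ "can be absorbed into the constant, leaving the same $\beta$," these two statements are inconsistent: absorbing a factor changes the constant. Concretely, $\phi_0^{-1}(x,s)=\min\bigl\{\phi^{-1}(x,s)/\phi^{-1}(x,1),\,(s+1)/2\bigr\}$, and to compare $\phi_0^{-1}(x,s)$ with $\phi_0^{-1}(y,s)$ you need (A1) at level $s$ to control the numerators \emph{and} at level $1$ to control the denominators; multiplying the two gives $\beta^2\phi_0^{-1}(x,s)\leq\phi_0^{-1}(y,s)$, not $\beta$. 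The $\max$ with $2t-1$ does not automatically repair this: in the case $\phi_0^{-1}(x,s)=(s+1)/2$ while $\phi_0^{-1}(y,s)=\phi^{-1}(y,s)/\phi^{-1}(y,1)<(s+1)/2$, one is again forced through the $\beta^2$ chain. So either the "same $\beta$" assertion needs a more careful argument than the one you propose (presumably supplied in \cite{HarHas17riesz}), or your proof establishes the proposition only with a constant depending on $\beta$ rather than equal to it. The remaining items — the chain of equivalences $\phi\simeq\psi\simeq\phi_0\simeq 2\phi_0-1$ on $t\geq 1$, and the reduction of the norm identification on $t<1$ to (A2') — are correctly identified, though the passage from (A2') (which is stated for $L^{\phi(\cdot)}\cap L^\infty$) to the full norm equivalence deserves a line of justification via the intermediate bounds \eqref{july12}--\eqref{july9} or (A2'').
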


Notice that $\overline \phi$ is a generalized Young function fulfilling condition (A0) even if $\phi$ does not satisfy conditions 
(A0), (A1), and (A2). These are needed for $\overline \phi$ to be normalized and for equation \eqref{july6} to hold.

As pointed out in \cite{HarHas17riesz},  despite property \eqref{july6} the functions $\phi$ and $\overline \phi$ need not be globally equivalent, in general. 
However, as shown in the same paper, one has that
 \begin{equation}\label{july12}
 \phi(x,\beta t) \leq   \overline \phi(x,t) \leq \phi\big(x, \tfrac 4\beta t\big)\qquad \text{if  $t\geq 1$,}
\end{equation}
and
\begin{equation}\label{july9}
  \phi_\infty \big(\beta t\big)\leq  \overline \phi  (x,t) \leq \phi_\infty (\tfrac {2t}\beta) \qquad \text{if   $0\leq t <1$,}
\end{equation}
for $x\in \RRn$,
where $\beta$ is the constant appearing in property (A0)  for the function $\phi$. 
\\ Consequently,  if  $\phi$ fulfills both condition (A0) and the $\Delta_2$-condition, then the 
 function $\phi^\circ : \RRn \times [0, \infty) \to [0, \infty]$ obeying
\begin{align}\label{july13}
       \phi^\circ (x,t)= \begin{cases} \phi (x,t) \quad &\text{if $t\geq 1$}
\\ \phi_\infty (t)\quad &\text{if  $0\leq t <1$,}
\end{cases}
\end{align}
for $x\in \RRn$, is such that
\begin{equation}\label{oct47}
\phi^\circ \approx \overline \phi.
\end{equation}

When dealing with Musielak-Orlicz spaces defined on sets of finite measure, assumption (A2) on the generating  function $\phi$ is immaterial. The last result of this section shows that, if $\phi$ just fulfills assumptions (A0) and (A1), then there exists a normalized generalized Young function $\widehat \phi$ such that $L^{\widehat {\phi}(\cdot)}(\Omega)$ and  $L^{\phix}(\Omega)$ agree, up to equivalent norms, for every measurable set  $\Omega \subset \RRn$ with $|\Omega|<\infty$.
The function $\widehat \phi$ is defined as 
\begin{equation}\label{july11}
      \widehat \phi(x,t)= \begin{cases} 2\phi_0(x,t)-1 \quad & \text{if $t \geq 1$}
\\ t \quad &\text{if $0\leq t<1$,}
\end{cases}
\end{equation}
for $x \in \RRn$,
where $\phi_0$ is given by equation \eqref{july8}.

\begin{proposition}\label{replacephiomega} Let $\phi$ be a generalized Young function satisfying properties 
(A0) and (A1).
Then the function $\widehat \phi$ given by \eqref{july11} is a normalized generalized Young function{\color{black}, which satisfies condition \eqref{july4} with the same $\beta$ as  in condition (A1) for $\phi$}. Moreover,
\begin{align}\label{july19}
L^{\widehat \phi (\cdot)}(\Omega) = L^{\phi (\cdot)}(\Omega),
\end{align} 
up to equivalent norms,  for every measurable set  $\Omega \subset \RRn$ with $|\Omega|<\infty$.
\end{proposition}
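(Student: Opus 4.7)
The strategy is first to verify the three items of Definition~\ref{normalized} for $\widehat\phi$, and then to establish \eqref{july19} by a modular splitting into the regions where the argument exceeds $1$ or not.

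Since $\widehat\phi(x,t) = t$ for $t \in [0,1]$ by the definition \eqref{july11}, item (i) of Definition~\ref{normalized} is immediate, and item (iii) holds with $\widehat\phi_\infty(t) = t$ there. Continuity of $\widehat\phi(x,\cdot)$ at $t=1$ follows from $\phi_0(x,1) = \max\{\phi(x,\phi^{-1}(x,1)),1\} = 1$, so that $2\phi_0(x,1) - 1 = 1$ matches the linear branch; convexity on $[1,\infty)$ comes from the fact that $\phi_0(x,\cdot)$ is the maximum of two convex functions of $t$, so $\widehat\phi(x,\cdot)$ is indeed a Young function, and measurability in $x$ is inherited from~$\phi$. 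For item (ii), i.e., \eqref{july4}, the range $[0,1]$ is trivial because $\widehat\phi^{-1}(\cdot,s) = s$ there; on $t \in [1, 1/|B|]$ (non-empty only when $|B| \leq 1$) a direct computation based on \eqref{july11} and on the definition of $\phi_0$ gives
\[
\widehat\phi^{-1}(x,s) \;=\; \min\Big\{\tfrac{\phi^{-1}(x, (s+1)/2)}{\phi^{-1}(x,1)},\; \tfrac{s+3}{4}\Big\}.
\]
The first entry is $x$-comparable by combining the (A1) estimate, applied to $\phi^{-1}(\cdot, (s+1)/2)$ for $(s+1)/2 \in [1, 1/|B|]$, with the bounds $\phi^{-1}(x,1) \in [\beta, 1/\beta]$ from (A0); the second entry is independent of $x$ and therefore preserved under the $\min$. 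Keeping track of the constants, one concludes that \eqref{july4} holds for $\widehat\phi$ with the same $\beta$ as in (A1) for $\phi$.

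For the norm equivalence \eqref{july19}, the key observation is that $\widehat\phi$ coincides with the function $\overline\phi$ of \eqref{july7} on $[1,\infty)$, and the two-sided bounds \eqref{july12} depend only on condition (A0), hence they transfer verbatim:
\[
\phi(x,\beta t) \;\leq\; \widehat\phi(x,t) \;\leq\; \phi\big(x, \tfrac{4t}{\beta}\big) \qquad \text{for } t \geq 1.
\]
Given $u$ with $\int_\Omega \phi(x, |u|/\lambda_0)\, dx \leq 1$, set $\lambda = 4\lambda_0/\beta$ and split $\Omega = \{|u|/\lambda \geq 1\} \cup \{|u|/\lambda < 1\}$. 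On the first set, the upper bound above yields $\widehat\phi(x, |u|/\lambda) \leq \phi(x, |u|/\lambda_0)$, contributing at most $1$ to the modular of $u$ for $\widehat\phi$. On the second set, $\widehat\phi(x, |u|/\lambda) = |u|/\lambda$, whose integral is bounded via the embedding \eqref{nov151} by a constant depending on $\beta$ and $|\Omega|$. A standard rescaling of $\lambda$ by a multiplicative constant, justified by convexity of $\widehat\phi(x,\cdot)$, then gives $\|u\|_{L^{\widehat\phi(\cdot)}(\Omega)} \leq C\|u\|_{L^{\phix}(\Omega)}$. The reverse inclusion is analogous: the lower bound $\phi(x,\beta t) \leq \widehat\phi(x,t)$ for $t\geq 1$ handles the large-value set, while on $\{|u| < \lambda_0\}$ condition (A0) gives $\phi(x, \beta|u|/\lambda_0) \leq \phi(x,\beta) \leq \phi(x,\phi^{-1}(x,1)) = 1$, so the integral over this set is majorized by $|\Omega|$, and the same rescaling argument concludes.

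The principal subtlety is the tight bookkeeping of constants in the local condition for $\widehat\phi^{-1}$, needed to preserve the exact $\beta$ from (A1) in \eqref{july4}; the remaining modular estimates are routine once \eqref{july12} is observed to be available under only (A0) and once the finiteness of $|\Omega|$ is used to absorb the harmless small-$t$ discrepancy between $\widehat\phi(x,t) = t$ and~$\phi(x,t)$.
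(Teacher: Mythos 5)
Your overall architecture matches the paper's: identify that $\widehat\phi$ agrees with $\overline\phi$ on $[1,\infty)$, handle $t<1$ by hand, and then prove the norm equivalence from the two-sided bounds \eqref{july12} plus a trivial estimate for the small-argument region. The modular splitting you use in place of the paper's appeal to \cite[Theorem~2.8.1]{DieHarHasRuz17} is a perfectly serviceable, more explicit route, and your reverse-inclusion computation using $\phi(x,\beta)\le 1$ from (A0) is correct.

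The gap is in your verification of \eqref{july4} ``with the same $\beta$.'' You compute
\[
\widehat\phi^{-1}(x,s)=\min\Big\{\tfrac{\phi^{-1}(x,(s+1)/2)}{\phi^{-1}(x,1)},\,\tfrac{s+3}{4}\Big\}
\]
and then propose to control the first entry by applying (A1) to $\phi^{-1}(\cdot,(s+1)/2)$ and (A0) to $\phi^{-1}(\cdot,1)$. But (A1) gives $\beta\phi^{-1}(x,u)\le\phi^{-1}(y,u)$ and (A1) at $u=1$ gives $\phi^{-1}(y,1)\le\phi^{-1}(x,1)/\beta$, so the best you extract for the ratio is
\[
\frac{\phi^{-1}(y,u)}{\phi^{-1}(y,1)}\ \ge\ \beta^2\,\frac{\phi^{-1}(x,u)}{\phi^{-1}(x,1)},
\]
i.e.\ you lose a factor of $\beta$ and get \eqref{july4} with constant $\beta^2$, not $\beta$; the $\min$ structure does not rescue the constant, since the binding case is when both sides achieve the minimum at the first entry. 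So ``keeping track of the constants'' as you describe does not yield the claim as stated. The paper avoids this entirely: it simply observes that $\widehat\phi^{-1}(x,s)=\overline\phi^{-1}(x,s)$ for $s\ge 1$ (identity \eqref{oct45}) and $\widehat\phi^{-1}(x,s)=s$ is $x$-independent for $s<1$, and then \emph{imports} the ``same $\beta$'' assertion wholesale from Proposition~\ref{replacephi}, which rests on \cite[Proposition~4.2]{HarHas17riesz}. If you want a self-contained argument, you should either settle for a worse constant (which still establishes normalization qualitatively) or reproduce the argument behind Proposition~\ref{replacephi}; your current bookkeeping does not.

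One more small remark: on the set $\{|u|/\lambda<1\}$ you do not need embedding \eqref{nov151} at all—the integrand $\widehat\phi(x,|u|/\lambda)=|u|/\lambda$ is $<1$ pointwise there, so the integral is bounded by $|\Omega|$ directly. This makes that step slightly cleaner and removes a dependence you invoked unnecessarily.
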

\begin{proof} Observe that
\begin{equation}\label{oct45}
\widehat \phi (x,t)= \overline \phi (x,t) \qquad \text{if $t \geq 1$}
\end{equation}
and $x \in \RRn$. Since $\overline \phi$ is a normalized generalized Young function satisfying condition \eqref{july4} with the same $\beta$ as  in condition (A1) for $\phi$,
the fact that $\widehat \phi$ enjoys the same property  can be verified as a consequence of identities \eqref{oct45} and 
$$\widehat \phi (x,t)= t \qquad \text{if $0\leq t <1$}$$
and   $x \in \RRn$. 
\\ As far as 
property \eqref{july19} is concerned, identity \eqref{oct45}
and  inequalities \eqref{july12} yield:
  \begin{equation}\label{oct37}
  \widehat \phi(x,t) \leq \phi\big(x, \tfrac 4\beta t\big)\qquad \text{if  $t\geq 1$,}
\end{equation}
and
  \begin{equation}\label{oct39}
 \phi(x, t) \leq  \widehat \phi(x,\tfrac t\beta) \qquad \text{if  $t\geq \beta$,}
\end{equation}
 for $x \in \RRn$. On the other hand,
  \begin{equation}\label{oct38}
 \widehat \phi (x,t) \leq  \widehat \phi (x,1) = 1\qquad \text{if  $0\leq t< 1$.}
\end{equation}
and
  \begin{equation}\label{oct40}
 \phi(x, t) \leq  \phi(x,\beta)\leq 1 \qquad \text{if  $0 \leq t <\beta$,}
\end{equation}
where the last inequality holds thanks to property (A0) of $\phi$. Combining inequalities \eqref{oct37} and \eqref{oct38} tells us that
  \begin{equation}\label{oct41}
  \widehat \phi(x,t) \leq \phi\big(x, \tfrac 4\beta t\big) + 1\qquad \text{for $x \in \RRn$ and  $t\geq 0$,}
\end{equation}
whereas inequalities \eqref{oct39} and \eqref{oct40} yield
  \begin{equation}\label{oct42}
 \phi(x,t) \leq  \widehat \phi(x,\tfrac t\beta) + 1 \qquad \text{for $x \in \RRn$ and  $t\geq 0$,}
\end{equation}
Equation \eqref{july19} follows from inequalities \eqref{oct41} and \eqref{oct42}, via  \cite[Theorem 2.8.1]{DieHarHasRuz17}.
\end{proof}

Under the assumption that $\phi$ fulfills both condition (A0) and the $\Delta_2$-condition, consider the function $\phi^\bullet : \RRn \times [0, \infty) \to [0, \infty]$ defined as
\begin{align}\label{oct48}
       \phi^\bullet (x,t)= \begin{cases} \phi (x,t) \quad &\text{if $t\geq 1$}
\\t \quad &\text{if  $0\leq t <1$,}
\end{cases}
\end{align}
for $x\in \RRn$. Although, like $\phi^\circ$, the function $\phi^\bullet$ may not even be continuous at $t=1$, one has that
\begin{equation}\label{oct49}
\phi^\bullet \approx \widehat \phi.
\end{equation}
This is a consequence of equations \eqref{july12} and \eqref{july9}, and of the fact that $\widehat \phi(x,t)= \overline \phi(x,t)$ if $t\geq 1$ and $x \in \RRn$.

\section{Main results}\label{sec:main}

 Assume that $\Omega$ is an open set in $\RRn$ and $\phi$ is a generalized Young function. The homogeneous Musielak-Orlicz-Sobolev space $V^{1,\phix}(\Omega)$ is defined as 
\begin{equation}\label{june1}
V^{1,\phix}(\Omega) = \{u\in W^{1,1}_{\rm loc}(\Omega): \, |\nabla u|\in  L^{\phix}(\Omega)\}.
\end{equation}
{\color{black} If $\Omega$ is connected and $G$ is a bounded open set such that  $\overline G \subset \Omega$, then the functional 
\begin{equation} \label{nov153}
\|u\|_{L^1(G)}+ \|\nabla u\|_{L^\phix(\Omega)}
\end{equation} 
defines a norm on $V^{1,\phix}(\Omega)$. One can show that different choices of the set $G$ result in equivalent norms.}
\\
 The subspace of functions which vanish on $\partial \Omega$   is suitably given by 
\begin{equation}\label{june1vanish}
V^{1,\phix}_0(\Omega) = \{u\in W^{1,1}_{\rm loc}(\Omega): \, \text{the extension to $\RRn$ of $u$ by $0$ outside $\Omega$ belongs to $V^{1,\phix}(\RRn)$} \},
\end{equation}
{\color{black}and can equipped with the norm
\begin{equation}\label{nov154}
\|\nabla u\|_{L^\phix(\Omega)}.
\end{equation}}
When $\Omega =\RRn$, we also define the space
\begin{equation}\label{june2}
V^{1,\phix}_d(\RRn) = \{u \in V^{1, \phix}(\RRn): \, |\{|u|>t\}|<\infty\,\, \text{for every $t>0$}\}.
\end{equation}
Heuristically speaking, 
$V^{1,\phix}_d(\RRn)$ is the subspace of $V^{1,\phix}(\RRn)$  of those functions decaying near infinity in a weakest possible sense for a homogeneous Sobolev inequality in $\RRn$ to hold. {\color{black} The functional \eqref{nov154}, with $\Omega=\RRn$, defines a norm in $V^{1,\phix}_d(\RRn)$.
\\ When $\phi (x,t)=t^p$ for some constant $p\in [1,\infty)$, we shall simply write $V^{1,p}$ instead of $ V^{1,\phix}$ in the above notations.}

\subsection{\texorpdfstring{The inequality in $\RRn$}{The inequality in Rn}}\label{Rn}
 Our 
 embedding theorem in $\RRn$ holds for any Musielak-Orlicz-Sobolev space associated with a generalized Young function $\phi$ satisfying conditions (A0), (A1), (A2),
and
\begin{equation}\label{conv0}
\int_0 \bigg(\frac t{\phi_\infty(t)}\bigg)^{\frac 1{n-1}}\, dt <\infty.
\end{equation}
As shown in Proposition \eqref{nec} below, assumption \eqref{conv0} is indispensable when considering Sobolev inequalities in $V^{1,\phix}_d(\RRn)$.
 \\ The Sobolev conjugate  of $\phi$ is the generalized 
 Young function  $\phi_n$ defined as 
\begin{equation}\label{sobconj}
\phi_n (x,t) = \overline  \phi (x,H_n^{-1}(x,t)) \qquad \text{for $x\in\RRn$ and $t \geq 0$,}
\end{equation}
where  $H_n: \RRn \times [0, \infty) \to [0, \infty)$ is the function given by
\begin{equation}\label{H}
H_n(x,t) = \bigg(\int_0^t \bigg(\frac \tau{\overline \phi(x,\tau)}\bigg)^{\frac 1{n-1}}\, d\tau\bigg)^{\frac {1}{n'}} \quad \text{for $x \in \RRn$ and $t \geq 0$.}
\end{equation}
Here, $\overline \phi$ denotes the normalized generalized Young function associated with $\phi$ as in \eqref{july7}
and $n'= \frac n{n-1}$, the H\"older conjugate of $n$. 
\\ Thanks to equation \eqref{july9},   assumption \eqref{conv0} ensures that the function $H_n$ is finite-valued for $x \in \RRn$ and $t \geq 0$. Here,  $H_n^{-1}(x,\cdot)$ denotes the generalized left-continuous inverse of  $H_n(x,\cdot)$ for $x\in \RRn$. Note that, if 
\begin{equation}\label{convinf}
\int^\infty \bigg(\frac \tau{ \phi(x,\tau)}\bigg)^{\frac 1{n-1}}\, d\tau<\infty
\end{equation}
for some $x\in \RRn$,
then, by equation \eqref{july12}, $\lim_{t \to \infty}H_n(x,t)<\infty$. Hence, 
$H_n^{-1}(x,t)=\infty$ for $t$ exceeding this limit. The right-hand side of equation \eqref{sobconj} has  then to be interpreted as $\infty$ for these values of $t$.
 
\begin{theorem}{\rm\bf{ [Sobolev inequality in $\RRn$]}}\label{thm:main} 
Assume that $\phi $ is a  generalized Young function satisfying 
conditions
(A0), (A1), (A2) and 
\eqref{conv0}. 
Then, $$V^{1,\phi(\cdot)}_{d}(\RRn)\embedding L^{\phi_n(\cdot)}(\RRn),$$ and there exists a constant  $c=c(n, \beta, h)$  such that
\begin{equation}\label{main1}
\|u\|_{L^{\phi_n(\cdot)}(\RRn)}\leq c \|\nabla u\|_{L^{\phi(\cdot)}(\RRn)}
\end{equation}
for every $u \in V^{1,\phi(\cdot)}_{d}(\RRn)$. Here,  $\beta$ denotes the constant from  conditions (A0) and (A1), and $h$ the function from condition (A2). In particular, if $\phi= \overline \phi$, then the constant $c$ in inequality \eqref{main1} only depends on $n$ and~$\beta$.
\end{theorem}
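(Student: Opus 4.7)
The plan is to follow the three-step strategy already sketched in the introduction: reduce to a normalized $\phi$, prove a weak-type Sobolev inequality via a weak-type bound for the Riesz potential $I_1$, and upgrade to the strong inequality \eqref{main1} by Maz'ya's truncation scheme. As a preliminary reduction, Proposition \ref{replacephi} lets me replace $\phi$ by its normalization $\overline\phi$, because $L^{\phi(\cdot)}(\RRn)=L^{\overline\phi(\cdot)}(\RRn)$ with equivalent norms depending on $\beta$ and $h$, and the target $\phi_n$ in \eqref{sobconj} is built from $\overline\phi$ to begin with. I may therefore assume $\phi=\overline\phi$ is a normalized generalized Young function; \eqref{conv0} is precisely what guarantees finiteness of $H_n$ and hence well-definedness of $\phi_n$.

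The central analytic ingredient is the weak-type Riesz-potential estimate advertised in the introduction, specialized to $\alpha=1$: there exists $c_0$ such that, for every $t>0$ and every $f$ with $\|f\|_{L^{\phi(\cdot)}(\RRn)}\leq 1$,
\begin{equation}\label{pl:weakriesz}
\int_{\{|I_1 f|>t\}}\phi_n(x,c_0 t)\,dx \,\leq\, \int_{\RRn}\phi(x,|f(x)|)\,dx \,\leq\, 1.
\end{equation}
Combined with the pointwise bound $|u(x)|\leq c_n\,I_1(|\nabla u|)(x)$ applied to $f=|\nabla u|$, this yields the weak-type Sobolev estimate
\begin{equation}\label{pl:weaksob}
\int_{\{|u|>t\}}\phi_n(x,c_1 t)\,dx \,\leq\, 1 \qquad \text{whenever } \|\nabla u\|_{L^{\phi(\cdot)}(\RRn)}\leq 1.
\end{equation}
The Riesz representation is classical for compactly supported $u$; covering $u\in V^{1,\phi(\cdot)}_d(\RRn)$, where decay is only the measure-finiteness of each superlevel set, requires the extension lemma promised in the introduction, which I would invoke as a black box here.

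To upgrade \eqref{pl:weaksob} to \eqref{main1} I would run Maz'ya's dyadic truncation. After normalizing so that $\|\nabla u\|_{L^{\phi(\cdot)}(\RRn)}\leq 1$, set, for each integer $k$,
\begin{equation*}
u_k(x)=\min\bigl\{(|u(x)|-2^k)_+,\,2^k\bigr\},
\end{equation*}
so that $|\nabla u_k|=|\nabla u|\,\chi_{E_k}$ with $E_k=\{2^k<|u|\leq 2^{k+1}\}$, and $\{u_k\geq 2^k\}\supset\{|u|\geq 2^{k+1}\}$. Applying \eqref{pl:weaksob} to a suitable rescaling of $u_k$ at threshold $t\sim 2^k$ gives
\begin{equation*}
\int_{\{|u|>2^{k+1}\}}\phi_n(x,c\,2^k)\,dx \,\leq\, \int_{E_k}\phi(x,c'|\nabla u(x)|)\,dx.
\end{equation*}
Summing over all integers $k$, using the quasi-doubling of $\phi_n$ on each dyadic annulus so that $\phi_n(x,c\,2^k)$ controls $\phi_n(x,c''|u(x)|)$ on $\{2^{k+1}<|u|\leq 2^{k+2}\}$, and telescoping the cover yields a modular inequality of the form
\begin{equation*}
\int_{\RRn}\phi_n\bigl(x,c_2 |u(x)|\bigr)\,dx \,\leq\, c_3\int_{\RRn}\phi(x,|\nabla u(x)|)\,dx,
\end{equation*}
from which \eqref{main1} follows by the usual scaling argument on $\lambda u$.

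The main obstacle is \eqref{pl:weakriesz} itself: once it is in hand, the rest is essentially bookkeeping, but its proof has to handle simultaneously the $x$-dependence of $\phi$, the failure of rearrangement invariance of $L^{\phi(\cdot)}(\RRn)$, and the precise interplay of (A0)--(A2); this is where the machinery of $\overline\phi$ and of $\widetilde\phi$ (through H\"older \eqref{holder} and the reverse estimate \eqref{revholder}) would enter most delicately. A secondary difficulty is showing that the dyadic sum in Maz'ya's step telescopes correctly even when $\phi_n$ need not satisfy $\Delta_2$ globally: the required doubling on the relevant local range of $t$ must be read off directly from the definition \eqref{sobconj} and the integrability condition \eqref{conv0}, rather than being inherited from $\phi$.
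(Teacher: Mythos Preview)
Your proposal is correct and follows essentially the same route as the paper: reduce to $\phi=\overline\phi$ via Proposition~\ref{replacephi}, combine the Riesz representation of Lemma~\ref{lemma30} with the weak-type estimate of Theorem~\ref{lemma12} (your \eqref{pl:weakriesz}), and then run Maz'ya's dyadic truncation exactly as you describe. Your closing worry about $\Delta_2$ for $\phi_n$ is unnecessary: on $U_j=\{2^j<|u|\leq 2^{j+1}\}$ one has $\phi_n(x,\tfrac{c}{4}|u|)\leq \phi_n(x,c\,2^{j-1})$ by bare monotonicity of $\phi_n(x,\cdot)$, and no rescaling of $u_k$ is needed since $|\nabla u_k|\leq|\nabla u|$ already gives $\|\nabla u_k\|_{L^{\phi(\cdot)}}\leq 1$, so the summed right-hand side is $\int_{\RRn}\phi(x,|\nabla u|)\,dx\leq 1$ with no extra constant.
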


\begin{remark}\label{rem-oct}{\rm
One can verify that replacing $\overline \phi$ with any equivalent (in the sense of the relation $\simeq$)  generalized Young function in formulas \eqref{sobconj} and \eqref{H} results in a  generalized Young function equivalent to $\phi_n$. 
For customary choices of $\phi$, such an equivalent function is immediately obtained from the expression of $\phi$. These replacements just affect the constant $c$ appearing in inequality \eqref{main1}. In fact, the same conclusion holds even if $\phi$ is replaced with an equivalent function,  which is not necessarily a generalized Young function. The norm $\|\cdot \|_{L^{\phi_n(\cdot)}(\RRn)}$ is in fact equivalent to the functional obtained after this replacement.
Thus, for instance, if $\phi$ satisfies the $\Delta_2$-condition, then, thanks to equation \eqref{oct47}, the norm $\|\cdot \|_{L^{\phi_n(\cdot)}(\RRn)}$ in inequality \eqref{main1} can be replaced with the functional generated by the use of the function $\phi^\circ$ given by \eqref{july13}, instead of $\overline \phi$, in \eqref{sobconj} and \eqref{H}.
}
\end{remark}

\begin{remark}\label{integralform} {\rm An inspection of the proof of Theorem \ref{thm:main}  will reveal that, under the same assumptions, the following  Sobolev inequality in modular form holds:
 \begin{equation}\label{integralform1}
\int_{\RRn} \phi_n(x, c |u|)\, dx  \leq   \int_{\RRn} \overline\phi(x, |\nabla u|)\, dx 
\end{equation}
for some constant  $c=c(n, \beta)$  and for every $u$ such that $ \int_{\RRn} \overline \phi(x, |\nabla u|)\, dx \leq 1$.   Inequality \eqref{integralform1} can be of use in applications to the analysis of solutions to partial differential equations. Parallel conclusions about possible replacements of the function $\overline\phi$ in the definition of the function $\phi_n$ as in Remark \ref{rem-oct} hold with regard to inequality  \eqref{integralform1}.}
\end{remark}

The following result demonstrates the necessity of condition \eqref{conv0}  for the embedding of Theorem \ref{thm:main}, and even for more general embeddings of the space $V^{1,\phi(\cdot)}_{d}(\RRn)$ into normed function lattices.

\begin{proposition}{\rm\bf{ [Necessity of condition \eqref{conv0}]}}\label{nec}
Let $\phi $ be as in Theorem \ref{thm:main}. 
Assume that there exists a normed function lattice $Y(\RRn)$ such that
\begin{equation}\label{nec1}
\|u\|_{Y(\RRn)}\leq c \|\nabla u\|_{L^{\phi(\cdot)}(\RRn)}
\end{equation}
for every $u \in V^{1,\phi(\cdot)}_{d}(\RRn)$. Then, condition \eqref{conv0} must be fulfilled.

\end{proposition}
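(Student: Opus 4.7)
The plan is to argue by contradiction: assume that \eqref{nec1} holds with some normed function lattice $Y(\RRn)$ but that \eqref{conv0} fails, so $\int_0 (\tau/\phi_\infty(\tau))^{1/(n-1)}\,d\tau = \infty$. The strategy is to transplant the classical obstruction for Orlicz-Sobolev inequalities into the Musielak-Orlicz setting by translating test functions far from the origin, where condition (A2) forces $\phi$ to behave like $\phi_\infty$.

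First I would carry out the classical Orlicz construction associated with the divergent integral. For each $k$, define the radial function $v_k(x) = f_k(|x|)$ supported in the unit ball, where $|f_k'(\rho)| = \min\{\phi_\infty^{-1}(c/\rho^{n-1}), k\}$ and $f_k(1)=0$, with a small constant $c>0$. A spherical-coordinate computation shows $\int_{\RRn}\phi_\infty(|\nabla v_k|)\,dx$ is uniformly bounded (the integrand equals $\min\{c/\rho^{n-1},\phi_\infty(k)\}\,\rho^{n-1}\leq c$), so after a harmless rescaling $\|\nabla v_k\|_{L^{\phi_\infty}(\RRn)}\leq 1$. Meanwhile $v_k(0)=\int_0^1|f_k'|\,d\rho$ tends to infinity as $k\to\infty$, since the assumed divergence of \eqref{conv0} translates, via the change of variables $s=c/\rho^{n-1}$, into the divergence of this very integral.

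Next, pick a sequence $x_k\in\RRn$ with $|x_k|\to\infty$ rapidly and with the translated supports $x_k+B(0,1)$ pairwise disjoint, and set $u_k(x)=v_k(x-x_k)$. The heart of the argument is to bound $\|\nabla u_k\|_{L^{\phi(\cdot)}(\RRn)}$ uniformly in $k$ using condition (A2), in the form (A2'') from Definition \ref{def:axioms'}. The support of $\nabla u_k$ is split into a moderate-gradient shell $\{\rho_c\leq|x-x_k|\leq 1\}$ on which $\phi_\infty(|\nabla v_k|)\leq 1$ and (A2'') gives $\phi(x,\beta|\nabla u_k|)\leq \phi_\infty(|\nabla v_k(\cdot-x_k)|)+h(x)$, whose $x$-integral is dominated by $\int\phi_\infty(|\nabla v_k|)\,dy+\int_{x_k+B(0,1)}h\,dx$ with the second summand tending to $0$ as $|x_k|\to\infty$ since $h\in L^1(\RRn)$, plus a thin core $B(x_k,\rho_*)$ of Lebesgue measure $\omega_n(c/\phi_\infty(k))^{n/(n-1)}$ on which a cruder bound on $\phi(\cdot,k)$ is absorbed by the smallness of the measure. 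This yields $\|\nabla u_k\|_{L^{\phi(\cdot)}(\RRn)}\leq C$ uniformly, and hence $\|u_k\|_{Y(\RRn)}\leq cC$ by \eqref{nec1}.

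To close with a contradiction, set $u=\sum_k\alpha_k u_k$ with $\{\alpha_k\}$ summable, and use the freedom in the truncation level to choose each $v_k$ so that $\alpha_k(v_k(0)/2)\|\chi_{B(x_k,r_k)}\|_{Y(\RRn)}\to\infty$, where $r_k>0$ is the radius on which $v_k\geq v_k(0)/2$. The disjoint-support structure and convexity of $\phi$ yield $\int\phi(x,|\nabla u|)\,dx\leq C\sum_k\alpha_k<\infty$, so $u\in V^{1,\phi(\cdot)}_{d}(\RRn)$ and \eqref{nec1} forces $\|u\|_{Y(\RRn)}<\infty$; on the other hand, $u\geq \alpha_k(v_k(0)/2)\chi_{B(x_k,r_k)}$ together with the lattice property \eqref{lattice} force $\|u\|_{Y(\RRn)}\geq \alpha_k(v_k(0)/2)\|\chi_{B(x_k,r_k)}\|_{Y(\RRn)}\to\infty$, a contradiction. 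I expect the main obstacle to be the uniform modular bound of the previous paragraph: conditions (A2), (A2'') and (A2''') compare $\phi$ to $\phi_\infty$ only over a range of the second variable dictated by the error function $h$, while $|\nabla v_k|$ is unbounded in $k$, so the measure-versus-value split on the core, together with the careful choice of the constant $c$ in the construction of $v_k$, is what makes the bound tractable.
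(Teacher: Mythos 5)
Your construction probes the wrong asymptotic regime of $\phi_\infty$, and this is a fatal gap. Condition \eqref{conv0} concerns $\phi_\infty(t)$ as $t\to 0^+$ (how the modular penalises \emph{small} gradients), whereas your spike functions $v_k$, with $|f_k'(\rho)|=\min\{\phi_\infty^{-1}(c/\rho^{n-1}),k\}$ blowing up as $\rho\to 0^+$, feed $\phi_\infty^{-1}$ \emph{large} arguments and therefore probe $\phi_\infty$ near $t=\infty$. The substitution $s=c/\rho^{n-1}$ turns $\lim_k v_k(0)=\int_0^1\phi_\infty^{-1}(c/\rho^{n-1})\,d\rho$ into a constant multiple of $\int_c^\infty\phi_\infty^{-1}(s)s^{-n'}\,ds$, whose divergence is an $s\to\infty$ condition that is \emph{not} equivalent to the divergence of $\int_0(\tau/\phi_\infty(\tau))^{1/(n-1)}\,d\tau$. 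Concretely, take $\phi(x,t)=t^n$, so $\phi_\infty(t)=t^n$: condition \eqref{conv0} fails ($\int_0\tau^{-1}\,d\tau=\infty$), yet $v_k(0)\to\int_0^1 c^{1/n}\rho^{-(n-1)/n}\,d\rho<\infty$ because $(n-1)/n<1$, so your construction produces no unboundedness at all. There is also a secondary structural difficulty even when $v_k(0)\to\infty$: the radius $r_k$ of the set where $v_k\geq v_k(0)/2$ shrinks, so $\|\chi_{B(x_k,r_k)}\|_Y$ may shrink as well, and the lattice axioms give no quantitative control on this decay with which to force a divergent product.

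The correct obstruction is a \emph{plateau}, not a spike: a function constant on a \emph{fixed} ball and decaying slowly over a very large annulus, so that the lattice axiom \eqref{lattice} gives $\|u\|_{Y(\RRn)}\geq u(0)\,\|\chi_{B(0,1)}\|_{Y(\RRn)}$ against a fixed characteristic function. This is precisely the paper's choice of trial functions $u(x)=\int_{\omega_n|x|^n}^\infty g(s)s^{-1/n'}\,ds$ with $g$ supported in $[\omega_n,\infty)$: then $|\nabla u(x)|=cg(\omega_n|x|^n)$ vanishes on $B(0,1)$, $u$ is constant on $B(0,1)$, and inequality \eqref{nec1} together with the pointwise bound $\phi(x,\cdot)\leq A(\cdot)=\sup_y\phi(y,\cdot)$ force $s^{-1/n'}\in L^{\widetilde A}(\omega_n,\infty)$, which is equivalent via \eqref{optweak21}--\eqref{optweak23} to $\int_0(t/A(t))^{1/(n-1)}\,dt<\infty$; since $\phi$ is normalised, $A=\phi_\infty$ on $[0,1]$ and \eqref{conv0} follows. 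Note that this route makes your translation-to-infinity step and the use of (A2'') entirely unnecessary: after normalisation, $\phi(x,\cdot)=\phi_\infty$ on $[0,1]$ uniformly in $x$, and the global dominance $A\geq\phi$ replaces the translation trick.
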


\subsection{Inequalities in  domains}\label{domains}

 Here, we offer 
  Sobolev-Poincar\'e type inequalities in  open sets $\Omega \subset \RRn$ with finite Lebesgue measure. Of course, they imply corresponding Sobolev embeddings. Thanks to the finiteness of the measure of $\Omega$, condition (A2) on the function $\phi$ defining the Musielak-Orlicz-Sobolev space is no longer required. For the same reason, assumption \eqref{conv0} can also be dropped.  Namely, the Sobolev-Poincar\'e inequality holds whenever $\phi$ satisfies conditions (A0) and (A1).
Indeed, owing to Proposition  \ref{replacephiomega}, if $\phi$ fulfills these conditions, then the space  $V^{1,\phix}(\Omega)$ remains unchanged, up to equivalent norms, after replacing $\phi$ with the normalized generalized Young function $\widehat \phi$ given by \eqref{july11}, which plainly satisfies assumption \eqref{conv0}.

The Sobolev conjugate  of $\phi$ on  domains with finite measure can thus be defined as the generalized 
 Young function  $\phi_{n, \diamond}$ obeying 
\begin{equation}\label{sobconj1}
\phi_{n, \diamond} (x,t) = \widehat  \phi (x,H_{n, \diamond}^{-1}(x,t)) \qquad \text{for $x\in\RRn$ and $t \geq 0$,}
\end{equation}
where  $H_{n, \diamond}: \RRn \times [0, \infty) \to [0, \infty)$ is the function given by
\begin{equation}\label{H1}
H_{n, \diamond}(x,t) = \bigg(\int_0^t \bigg(\frac \tau{\widehat \phi(x,\tau)}\bigg)^{\frac 1{n-1}}\, d\tau\bigg)^{\frac {1}{n'}} \quad \text{for $x \in \RRn$ and $t \geq 0$.}
\end{equation}

  We begin with a Sobolev-Poincar\'e inequality for functions in $V^{1,\phix}_0(\Omega)$. This only requires that $\Omega$ has a finite measure.

\begin{theorem}{\rm\bf{ [Poincar\'e-Sobolev inequality with zero  boundary values]}}  \label{thm:mainzero} Let $\Omega$ be an open set in $\RRn$ such that $|\Omega|<\infty$ and let $\phi $ be a  generalized Young function satisfying conditions (A0) and (A1).  
Then, $$V^{1,\phi(\cdot)}_{0}(\Omega)\embedding L^{\phi_n(\cdot)}(\Omega),$$ and there exists a constant $c=c(\beta, n, |\Omega|)$  such that
\begin{equation}\label{main1zero}
\|u\|_{L^{\phi_{n, \diamond}(\cdot)}(\Omega)}\leq c \|\nabla u\|_{L^{\phi(\cdot)}(\Omega)}
\end{equation}
for every   $u \in V^{1,\phi(\cdot)}_0(\Omega)$. Here,  $\beta$ denotes the constant appearing in conditions (A0) and (A1).
\end{theorem}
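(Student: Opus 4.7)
The plan is to reduce the statement to Theorem~\ref{thm:main} applied to the normalized function $\widehat{\phi}$ in place of $\phi$, after extending $u$ by zero outside $\Omega$. Given $u \in V^{1,\phi(\cdot)}_0(\Omega)$, definition \eqref{june1vanish} already tells us that its zero extension $\tilde u$ lies in $V^{1,\phi(\cdot)}(\RRn)$; since the level sets $\{|\tilde u|>t\}$ are contained in $\Omega$, which has finite measure, in fact $\tilde u \in V^{1,\phi(\cdot)}_d(\RRn)$.

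The decisive check is that $\widehat{\phi}$ satisfies all four hypotheses of Theorem~\ref{thm:main}. Properties (A0) and (A1) are granted by Proposition~\ref{replacephiomega}. The key observation is that, by construction, $\widehat{\phi}(x,t) = t$ for $t \in [0,1)$. This immediately forces $\widehat{\phi}_\infty(t)=t$ on $[0,1)$, so the integrand $(t/\widehat{\phi}_\infty(t))^{1/(n-1)}$ in \eqref{conv0} equals~$1$ near the origin and \eqref{conv0} holds trivially. The same linearity on $[0,1]$ makes condition (A2'') valid with $\beta = 1$ and $h\equiv 0$ (both defining inequalities collapse to $t=t$ on the relevant range $t\in [0,1]$), and the equivalence of (A2'') with (A2) recorded from \cite{HarHas19book} then yields (A2).

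Applying Theorem~\ref{thm:main} to $\widehat{\phi}$ and $\tilde u$ therefore gives
\begin{equation*}
\|\tilde u\|_{L^{\widehat{\phi}_n(\cdot)}(\RRn)} \leq c\,\|\nabla \tilde u\|_{L^{\widehat{\phi}(\cdot)}(\RRn)}.
\end{equation*}
Since $\widehat{\phi}$ is itself normalized, a quick inspection of the recipe \eqref{july8}--\eqref{july7} shows that $\overline{\widehat{\phi}} \approx \widehat{\phi}$ uniformly on $\RRn \times [0,\infty)$, so by Remark~\ref{rem-oct} the Sobolev conjugate $\widehat{\phi}_n$ produces an equivalent norm to the function $\phi_{n,\diamond}$ built directly from $\widehat{\phi}$ via \eqref{sobconj1}--\eqref{H1}. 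Using that $\tilde u$ and $\nabla \tilde u$ are supported in $\Omega$, and invoking the norm equivalence $L^{\widehat{\phi}(\cdot)}(\Omega) = L^{\phi(\cdot)}(\Omega)$ from Proposition~\ref{replacephiomega}, the displayed inequality transports to the desired \eqref{main1zero}, with a constant depending on $\beta$, $n$, and (through the equivalence in Proposition~\ref{replacephiomega}, which uses $|\Omega|$ via the norms of $\chi_\Omega$) on $|\Omega|$.

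I do not foresee a genuine obstacle: once Theorem~\ref{thm:main} is in hand, the whole content of this statement lies in observing that the linear-in-$t$ modification of $\phi$ near the origin in $\widehat{\phi}$ renders both (A2) and \eqref{conv0} automatic, while costing nothing on the Musielak-Orlicz norm of $\nabla u$ thanks to Proposition~\ref{replacephiomega}. The only minor care needed is to track that the normalization $\widehat{\phi}$ (not $\overline{\phi}$) is the correct one when defining the sharp target $\phi_{n,\diamond}$ on domains of finite measure.
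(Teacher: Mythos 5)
Your proposal is correct and matches the paper's intended route: extend $u$ by zero, replace $\phi$ by the normalized $\widehat\phi$ via Proposition~\ref{replacephiomega}, observe that the normalization forces \eqref{conv0} and (A2) to hold automatically, and then invoke the machinery of Theorem~\ref{thm:main}. The only small stylistic difference is that you verify (A2) for $\widehat\phi$ explicitly via (A2$''$) with $h\equiv 0$, whereas the paper bypasses the full hypothesis set of Theorem~\ref{thm:main} by instead appealing to Remark~\ref{rem-oct5}, which notes that the weak-type Riesz estimate of Theorem~\ref{lemma12} holds directly for any normalized generalized Young function satisfying \eqref{conv0a} without reinvoking (A0)--(A2); the two justifications are equivalent in substance.
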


Like any other mean-value Sobolev-Poincar\'e inequality, this kind of inequalities in Musielak-Orlicz spaces requires that the domain $\Omega$ be regular enough.  Any bounded John domain is admissible. Recall that
an open set $\Omega \subset \RRn$ is called a bounded
 John domain if there exist positive constants $c$ and $\ell$, and a
point $x_0 \in \Omega$ such that for every $x \in \Omega$ there exists a
rectifiable curve $\gamma : [0, \ell] \to \Omega$, parametrized by
arclenght, such that $\gamma (0)=x$, $\gamma (\ell) = x_0$, and
$${\rm dist}\, (\gamma (r) , \partial \Omega ) \geq c r \qquad
\hbox{for $r \in [0, \ell]$.}$$
Any connected bounded open set
satisfying the uniform
interior cone condition is a bounded John domain. In particular, connected bounded Lipschitz domains are bounded John domains. A bounded John domain may admit inward cusps, whereas outward cusps are forbidden. The class of bounded John domains also includes certain domains with highly irregular boundaries. This is the case of the interior of  Koch’s snowflake, for instance.

\begin{theorem}{\rm\bf{ [Mean-value Poincar\'e-Sobolev inequality]}}  \label{thm:main'} Let $\Omega$ be a bounded John domain in $\RRn$ and let $\phi $ be a  generalized Young function satisfying conditions (A0) and (A1). 
Then, $$V^{1,\phi(\cdot)}(\Omega)\embedding L^{\phi_n(\cdot)}(\Omega),$$  and there exists a constant $c=c(\beta, \Omega)$  such that
\begin{equation}\label{main1'}
\|u-u_\Omega\|_{L^{\phi_{n, \diamond}(\cdot)}(\Omega)}\leq c \|\nabla u\|_{L^{\phi(\cdot)}(\Omega)}
\end{equation}
for every   $u \in V^{1,\phi(\cdot)}(\Omega)$. Here,  $\beta$ denotes the constant appearing in conditions (A0) and (A1).
\end{theorem}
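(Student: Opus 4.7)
The plan is to reduce the mean-value inequality \eqref{main1'} to the weak-type bound for the Riesz potential $I_1$ in Musielak-Orlicz spaces developed earlier in the paper, by invoking a pointwise representation formula that exploits the John geometry of $\Omega$, and then to upgrade from weak-type to strong-type via Maz'ya's discretization--truncation technique. The embedding of Theorem \ref{thm:main} on $\mathbb{R}^n$ is not directly applicable because $u-u_\Omega$ does not vanish outside $\Omega$, so the crucial geometric input here is the John chaining.

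First, since $|\Omega|<\infty$ and $\phi$ satisfies (A0) and (A1), Proposition \ref{replacephiomega} allows me to replace $\phi$ by the normalized generalized Young function $\widehat\phi$ given by \eqref{july11}, up to constants depending only on $\beta$ and $|\Omega|$. I may therefore assume $\phi=\widehat\phi$; in particular $\phi(x,t)=t$ for $t\in[0,1]$, so the integrability analogue of \eqref{conv0} is automatically fulfilled, and $\phi_{n,\diamond}$ coincides with the $\phi_n$ built from $\widehat\phi$ through \eqref{sobconj}.

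Next, the John geometry enters through the classical chaining argument (Bojarski, Iwaniec--Nolder, Hurri-Syrj\"anen): for every $u\in V^{1,\phi(\cdot)}(\Omega)$, which lies in $W^{1,1}(\Omega)$ by \eqref{nov151}, one has
\[|u(x)-u_\Omega|\leq c_\Omega \int_\Omega \frac{|\nabla u(y)|}{|x-y|^{n-1}}\,dy = c_\Omega\, I_1\bigl(|\nabla u|\chi_\Omega\bigr)(x)\]
for a.e. $x\in\Omega$, with $c_\Omega$ depending only on the John parameters of $\Omega$. Applying the sharp weak-type inequality for $I_1$ in Musielak-Orlicz spaces (displayed in the introduction with $\alpha=1$) to $f=|\nabla u|\chi_\Omega$, normalized so that $\int_\Omega \phi(x,|\nabla u|)\,dx\leq 1$, yields the weak-type mean-value estimate
\[\int_{\{x\in\Omega:\,|u(x)-u_\Omega|>t\}} \phi_n(x, c_1 t)\,dx \leq \int_\Omega \phi(x,|\nabla u(x)|)\,dx\]
for every $t>0$.

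Finally, I upgrade this to the strong-type modular form via Maz'ya's truncation. Setting $w=u-u_\Omega$ and $w_k=\min\bigl\{(|w|-2^k)_+,\,2^k\bigr\}\operatorname{sgn}(w)$, one has $|\nabla w_k|=|\nabla u|\chi_{\{2^k<|w|\leq 2^{k+1}\}}$, and reapplying the John chaining to each $w_k$ followed by the weak-type inequality produces
\[\int_{\{|w|>2^{k+2}\}} \phi_n(x,c_2 2^{k})\,dx \leq \int_{\{2^k<|w|\leq 2^{k+1}\}}\phi(x,|\nabla u|)\,dx.\]
Summing over $k\in\mathbb{Z}$ and comparing $\int_\Omega \phi_n(x,c_3|w|)\,dx$ with the resulting dyadic sum, using only the convexity inequality \eqref{oct25} as a substitute for $\Delta_2$, gives $\int_\Omega \phi_n(x, c_4|u-u_\Omega|)\,dx\leq 1$, equivalent to \eqref{main1'}. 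The principal obstacle is the verification of the John-domain representation formula, which is not supplied by the body of the paper but must be invoked from the Bojarski-type literature; a secondary technicality is controlling the averages $(w_k)_\Omega$ of the truncated functions without a $\Delta_2$ assumption on $\phi_n$, which forces exploiting the explicit construction of $\phi_n$ via $H_n$ in \eqref{sobconj}--\eqref{H} to absorb the constant shifts into the dyadic summation.
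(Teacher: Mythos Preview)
Your proposal is correct and follows essentially the same route as the paper: normalize to $\widehat\phi$ via Proposition~\ref{replacephiomega}, invoke the John-domain Riesz representation (the paper cites \cite[Lemma~8.2.1]{DieHarHasRuz17} for $|u_j(x)-(u_j)_B|\le \kappa\,I_1(|\nabla u_j|)(x)$), apply the weak-type inequality of Theorem~\ref{lemma12}, and upgrade via Maz'ya truncation. The ``secondary technicality'' you flag---controlling the averages $(w_k)_\Omega$---is handled in the paper exactly as you anticipate: the mean $(u_j)_B$ is bounded uniformly in $j$ via the $L^1$ Poincar\'e inequality and H\"older, yielding $u_j(x)\le c'''\bigl(I_1(|\nabla u_j|)(x)+1\bigr)$, and the additive $+1$ produces a tail $\sum_{j\le j_0}\int_\Omega\phi_{n,\diamond}(x,\overline c\,2^{j-2})\,dx$ which converges because the explicit form of $\widehat\phi$ forces $\phi_{n,\diamond}(x,t)\le t^{n'}$ for $t\le 1$.
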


{\color{black}
We conclude our discussion about Musielal-Orlicz-Sobolev embeddings with a condition for compactness.

\begin{theorem}{\rm\bf{ [Compact embeddings]}}  \label{thm:maincompact}  Let $\Omega$ be an open set in $\RRn$ such that $|\Omega|<\infty$ and let $\phi $ be a  generalized Young function satisfying conditions (A0) and (A1). Let $\vartheta$ be a generalized Young function growing essentially more slowly than $\phi_{n, \diamond}$ near infinity and such that
  \begin{align}\label{nov170}
      \int_\Omega \vartheta(x,t)\,dx < \infty \qquad \text{for $t>0$}.
  \end{align}
 (i) The embedding 
\begin{equation}\label{nov160}
 V^{1,\phix}_0(\Omega) \embedding L^{\vartheta (\cdot)}(\Omega)
\end{equation}
is compact.
\\ (ii) Assume, in addition, that $\Omega$ is a bounded 
John domain. Then, the embedding 
\begin{equation}\label{nov161}
 V^{1,\phix}(\Omega) \embedding L^{\vartheta (\cdot)}(\Omega)
\end{equation}
is compact.
\end{theorem}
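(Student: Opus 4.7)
The idea is to combine the continuous Poincar\'e--Sobolev embeddings of Theorems~\ref{thm:mainzero} and \ref{thm:main'} with classical $L^1$-compactness, and then upgrade almost everywhere convergence to norm convergence in $L^{\vartheta(\cdot)}(\Omega)$ through a modular splitting tailored to the growth condition \eqref{moreslowly}.

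Fix a bounded sequence $\{u_k\}$ in the relevant space. From condition~(A0) and $|\Omega|<\infty$ we have the continuous inclusion \eqref{nov151}, and in particular $\|\nabla u_k\|_{L^1(\Omega)}$ is bounded. In case~(i) this makes $\{u_k\}$ bounded in $W^{1,1}_0(\Omega)$ (via the classical Poincar\'e inequality), and Rellich--Kondrachov yields a subsequence converging in $L^1(\Omega)$ and almost everywhere to some limit $u$. In case~(ii), the $W^{1,1}$-Poincar\'e inequality on bounded John domains applied to $u_k-(u_k)_\Omega$, together with an estimate of $|(u_k)_\Omega|$ extracted from $\|u_k\|_{L^1(G)}$ by the triangle inequality, shows that $\{u_k\}$ is bounded in $W^{1,1}(\Omega)$; the Rellich--Kondrachov theorem for John domains then provides the same a.e.\ convergent subsequence. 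Simultaneously, Theorems~\ref{thm:mainzero}/\ref{thm:main'} furnish $\Lambda>0$ such that $\int_\Omega \phi_{n,\diamond}(x,|u_k|/\Lambda)\,dx\le 1$ for every~$k$ (in case~(ii) the mean-value version is first applied and the bounded constant $(u_k)_\Omega$ is absorbed using that constants lie in $L^{\phi_{n,\diamond}(\cdot)}(\Omega)$ by (A0) and $|\Omega|<\infty$), and Fatou's lemma transfers the bound to~$u$.

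The core step is to show that $\int_\Omega \vartheta(x,|u_k-u|/\lambda)\,dx\to 0$ for every $\lambda>0$; by the Luxemburg formula \eqref{norm}, this is equivalent to $u_k\to u$ in norm in $L^{\vartheta(\cdot)}(\Omega)$. Fix $\lambda,\varepsilon>0$ and apply \eqref{moreslowly} with $c=2\Lambda/\lambda$ to obtain $T>0$ such that
$$\vartheta(x,s/\lambda)\le \varepsilon\,\phi_{n,\diamond}(x,s/(2\Lambda)) \quad \text{for a.e.\ $x\in\Omega$ and every $s\ge T$.}$$
Split the integral over $\{|u_k-u|>T\}$ and its complement. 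On the high-level set, convexity of $\phi_{n,\diamond}(x,\cdot)$ gives
$$\phi_{n,\diamond}(x,|u_k-u|/(2\Lambda))\le \tfrac{1}{2}\phi_{n,\diamond}(x,|u_k|/\Lambda)+\tfrac{1}{2}\phi_{n,\diamond}(x,|u|/\Lambda),$$
so this contribution is bounded by $\varepsilon$ uniformly in $k$. On the low-level set, $\vartheta(x,|u_k-u|/\lambda)\le \vartheta(x,T/\lambda)\in L^1(\Omega)$ by \eqref{nov170}, and since $u_k\to u$ a.e., dominated convergence drives this contribution to $0$. Hence $\limsup_k\int_\Omega \vartheta(x,|u_k-u|/\lambda)\,dx\le \varepsilon$, and letting $\varepsilon\to 0$ finishes the proof.

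The main obstacle is aligning the two tails of the modular splitting with their respective tools: the uniform-in-$x$ essential-supremum form of \eqref{moreslowly} is precisely what makes the high-level estimate independent of $k$ when tested against the uniform $\phi_{n,\diamond}$-modular bound on $u_k$, while \eqref{nov170} is exactly the hypothesis that upgrades $\vartheta(\cdot,T/\lambda)$ to an integrable dominator, enabling the dominated convergence step on the low-level set. Neither condition is superfluous: without \eqref{nov170} the dominator may fail to be integrable, and without \eqref{moreslowly} the high-level contribution cannot be absorbed by the modular bound.
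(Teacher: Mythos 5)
Your proof is correct and follows essentially the same strategy as the paper: extract an a.e.\ convergent subsequence via $W^{1,1}$-compactness, use the continuous Poincar\'e--Sobolev embedding to get a uniform $\phi_{n,\diamond}$-modular bound, and split the $\vartheta$-modular of $u_k-u$ at a threshold determined by \eqref{moreslowly}, controlling the high-level part by the modular bound and the low-level part by dominated convergence with dominator $\vartheta(\cdot,t_0)\in L^1(\Omega)$ from \eqref{nov170}. The only cosmetic difference is that you extract a factor $\varepsilon$ from \eqref{moreslowly} and then use convexity of $\phi_{n,\diamond}$, obtaining $\limsup_k\int_\Omega\vartheta(x,|u_k-u|/\lambda)\,dx\le\varepsilon$, whereas the paper stops at $\limsup\le 1$ for every $\gamma>0$ (which also suffices, by convexity applied once more); both are fine.
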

}

\begin{remark}\label{special} {\rm
Observe that, if the function $\phi$ fulfills condition \eqref{conv0}, and hence the function $\phi_n$ is well defined, then $\phi_n \simeq \phi_{n,\diamond}$ near infinity. Therefore, since $|\Omega|<\infty$, inequalities  \eqref{main1zero} and \eqref{main1'} can be equivalently formulated with the norm $\|\cdot\|_{L^{\phi_{n, \diamond}(\cdot)}(\Omega)}$ replaced with $\|\cdot\|_{L^{\phi_{n}(\cdot)}(\Omega)}$. An equivalent inequality is also obtained whenever $\phi_{n,\diamond}$ is replaced with the function defined as in \eqref{sobconj1}--\eqref{H1}, with $\widehat \phi$ replaced with any generalized Young function $\psi$ such that $\psi(x,t) \simeq \widehat \phi (x,t)$ for $x \in \Omega$ and $t \geq 0$.
Moreover, under the additional assumption that $\phi$ satisfies the $\Delta_2$-condition, 
owing to equation \eqref{oct49} the norm $\|\cdot \|_{L^{\phi_{n, \diamond}(\cdot)}(\RRn)}$ in inequalities  \eqref{main1zero} and   \eqref{main1'} can be replaced with the functional obtained via the function $\phi^\bullet$ defined by \eqref{oct48}, instead of $\widehat \phi$, in \eqref{sobconj1} and \eqref{H1}.

}
\end{remark}

The remaining part of this section is devoted to applications of Theorems \ref{thm:main}, \ref{thm:mainzero} and \ref{thm:main'} in the case of Musielak-Orlicz-Sobolev spaces built upon generalized Young functions of  the special forms presented in Section \ref{background}.

\begin{example} {\rm\bf{[Classical Orlicz-Sobolev conjugate]}} \label{orliczsob}{\rm Assume that $\phi$ is independent of $x$, namely 
$$\phi (x,t) = A(t)$$
for some classical Young function $A$. By replacing, if necessary, $A$ with an equivalent Young function, we may assume, without loss of generality, that $A(1)=1$. Consequently, $\phi$ is normalized and $\phi = \overline \phi$. The Sobolev conjugate $\phi_n$ is hence also independent of $x$. Thus, with  abuse of notation, we may drop the dependence on $x$, and formulas \eqref{sobconj} and \eqref{H} yield
\begin{equation}\label{oct50}
\phi_n (t) =  A(H_n^{-1}(t)) \qquad \text{for  $t \geq 0$,}
\end{equation}
where  $H_n:  [0, \infty) \to [0, \infty)$ is the function given by
\begin{equation}\label{oct51}
H_n(t) = \bigg(\int_0^t \bigg(\frac \tau{A(\tau)}\bigg)^{\frac 1{n-1}}\, d\tau\bigg)^{\frac {1}{n'}} \quad \text{for   $t \geq 0$.}
\end{equation}
The function $\phi_n$ reproduces the sharp Sobolev conjugate of $A$, in the form exhibited in \cite{cianchi2}.
}
\end{example}

\begin{example} {\rm\bf{[Variable exponent Sobolev conjugate]}}   \label{variablesob}{\rm
Let $\phi$ be the variable exponent function as in Example \ref{exvariable}. Namely,
$$\phi(x,t) = t^{p(x)} \quad \text{for  $x\in \RRn$ and $t \geq0$.}$$
Clearly, 
$$\phi_\infty (t)= t^{p_\infty} \quad \text{for $t \geq 0$.}$$
 Consider inequalities in $\RRn$ and suppose that 
\begin{equation}\label{sep169}
p_\infty <n,
\end{equation} 
an assumption indispensable in view of  Proposition \ref{nec}.
 Since $p\in L^\infty(\RRn)$, the function $\phi$ satisfies the  $\Delta_2$-condition. Thus, in view of Remark  \ref{rem-oct}, using the function $\phi^\circ$, given by \eqref{july13}, instead of $\overline \phi$ in the definition of $\phi_n$, results in an equivalent function. Let us still denote, with abuse of notation,  by $H_n$ and $\phi_n$ the functions obtained after this replacement.
One has that
$$ \phi^\circ (x,t) = \begin{cases} t^{p(x)} & \quad \text{if $t \geq 1$}
\\ t^{p_\infty}& \quad \text{if $0 \leq t <1$,}
\end{cases}$$
for $x \in \RRn$.
Thus, 
\begin{equation}\label{sep161}
H(x,t)=t_0\,t^{\frac{n-p_\infty}n} \qquad \text{if $0 \leq t <1$,}
\end{equation}
and 
\begin{equation}\label{oct10}
H^{-1}(x,t)=   t_0^{-\frac{n}{n-p_\infty}} t^\frac{n}{n-p_\infty}\qquad \text{if $t <t_0$,}
\end{equation}
where we have set
$t_0= \big(\tfrac{n-1}{n-p_\infty}\big)^{\frac{1}{n'}}$.
Consequently,
\begin{equation}\label{oct11}
\phi_n(x,t) = t_0^{-\frac{np_\infty}{n-p_\infty}} t^\frac{np_\infty}{n-p_\infty}\qquad \text{if $t <t_0$.}
%
\end{equation}
The behavior of the function $\phi_n(x,t)$ for $t \geq t_0$ takes a different form, depending on whether $1 \leq p(x)<n$, $p(x)=n$, or $p(x)>n$.
\\  Assume first that $1 \leq p(x)<n$. Then,
\begin{equation}\label{sep162}
H(x,t)= \Big(\frac{n-1}{n-p(x)}\Big)^{\frac{1}{n'}}\Big( \frac{p_\infty - p(x)}{n-p_\infty} +t^{\frac{n-p(x)}{n-1}}\Big) ^{\frac{1}{n'}}\qquad \text{if $t \geq 1$,}
\end{equation}
whence 
\begin{equation}\label{sep163}
H^{-1}(x,t)= \bigg(\Big(\frac{n-p(x)}{n-1}\Big)^{\frac{1}{n'}}t^{n'} -   \frac{p_\infty - p(x)}{n-p_\infty} \bigg) ^{\frac{n-1}{n-p(x)}}\qquad \text{if $t \geq t_0$.}
\end{equation}
Therefore,
\begin{equation}\label{oct12}
\phi_n(x,t) =  \bigg(\Big(\frac{n-p(x)}{n-1}\Big)^{\frac{1}{n'}}t^{n'} -   \frac{p_\infty - p(x)}{n-p_\infty} \bigg) ^{\frac{(n-1)p(x)}{n-p(x)}}
\quad   \text{if $t \geq t_0$.} 
\end{equation}
In particular, 
\begin{equation}\label{sep164}
\phi_n(x,t)   \approx (n-p(x))^{\frac 1{n-p(x)}}t^{\frac{np(x)}{n-p(x)}} \quad \text{if $t \geq t_1$,}
\end{equation}
for a suitable $t_1> t_0$ and with equivalence constants independent of $x$.
\\ Next, assume that $p(x)=n$. Since
\begin{equation}\label{sep165}
H(x,t)=  \Big( \frac{n-1}{n-p_\infty} +\log t\Big) ^{\frac{1}{n'}}\qquad \text{if $t \geq 1$,}
\end{equation}
we obtain that
\begin{equation}\label{sep165b}
\phi_n(x,t)  =e^{\frac{n(1-n)}{n-p_\infty}}e^{nt^{n'}}\qquad \text{if $t \geq t_0$.}
\end{equation}
\\ Finally, if $p(x)>n$, then
\begin{equation}\label{sep166}
H(x,t)= \Big(\frac{n-1}{p(x)-n}\Big)^{\frac{1}{n'}}\Big( \frac{p(x)-p_\infty}{n-p_\infty} -t^{\frac{n-p(x)}{n-1}}\Big) ^{\frac{1}{n'}}\qquad \text{if $t \geq 1$.}
\end{equation}
Thereby,
\begin{equation}\label{sep167}
H^{-1}(x,t)= \begin{cases} \infty &\quad \text{if $t \geq t_\infty(x)$}
\\
\displaystyle \bigg(\frac{p(x)-p_\infty}{n-p_\infty} -   \frac{p(x)-n}{n-1}t^{n'} \bigg) ^{\frac{n-1}{n-p(x)}}&\quad \text{if $ t_0 \leq t < t_\infty(x)$,}
\end{cases}
\end{equation}
where we have set $t_\infty(x) =\big( \frac{(p(x)-p_\infty)(n-1)}{(n-p_\infty)(p(x)-n)}\big)^{\frac{1}{n'}}$.
Hence,
\begin{equation}\label{sep168}
\phi_n(x,t) = \begin{cases} \infty &\quad \text{if $t \geq t_\infty (x)$}
\\
\displaystyle \bigg(\frac{p(x)-p_\infty}{n-p_\infty} -   \frac{p(x)-n}{n-1}t^{n'} \bigg) ^{\frac{(n-1)p(x)}{n-p(x)}}&\quad \text{if $ t_0\leq t < t_\infty (x)$.}
\end{cases}
\end{equation}
Altogether,
by Theorem \ref{thm:main}, inequality \eqref{main1} follows with $\phi_n$
given by \eqref{oct11}, \eqref{oct12},

  As far as inequalities on domains are concerned,  inequalities \eqref{main1zero} and \eqref{main1'} hold with $\phi_{n, \diamond}$ equivalent, for large values of $t$, to the function $\phi_n$ defined as above, even if assumption \eqref{sep169} is dropped. These conclusions recover results from \cite{Diening04} and \cite{HarHas08Sobolev1n}, and also extend them to the case when $p(x)>n$.
}

\end{example}

\begin{example}{\rm\bf{[Double-phase Sobolev conjugate]}}\label{doublesob}  {\rm
Consider  the double-phase function   
$$\phi(x,t)= t^p + a(x)t^q \quad \text{for  $x\in \RRn$ and $t \geq0$,}$$
from Example \ref{doubleex}.
In particular, $1 \leq p < q$ and the function $a$ belongs to $L^\infty (\RRn)$. Hence, the function $\phi$  satisfies the $\Delta_2$-condition \eqref{delta2}.
Since $0\leq a(x) \leq \|a\|_{L^\infty (\RRn)}$, one can verify that
$$\phi (x, \phi^{-1}(x,1)t) \approx \phi(x,t) \quad \text{for $x \in \RRn$ and $t \geq 0$.}$$
Moreover, 
inasmuch as 
\begin{equation}\label{sep150}
t^p \leq \phi(x,t) \leq (1+\|a\|_{L^\infty (\RRn)})\, t^p \qquad \text{if $0\leq t \leq 1$,}
\end{equation}
for $x \in \RRn$,
we   have that
$$\phi_0(x,t) \approx \phi(x,t)\quad \text{for $x \in \RRn$ and $t \geq 0$,}$$
where $\phi_0$ is the function defined as in \eqref{july8}, and also
$$2\phi_0(x,t)-1 \approx \phi(x,t)\quad \text{if $t \geq 1$,}$$
for $x \in \RRn$. Altogether, we infer 
 that
$$\overline \phi (x,t) \approx \phi(x,t) \quad \text{for $x \in \RRn$ and $t \geq 0$}.$$
Let us focus on inequalities in $\RRn$.
By equation \eqref{sep150},
$$\phi_\infty(t) \approx t^p \qquad \text{if $0\leq t \leq 1$.}
$$
Hence, condition \eqref{conv0} entails that
$p<n$.
In view of Remark \ref{rem-oct}, we can define the function $\phi_n$  via $\phi$ itself instead of $\overline \phi$. 
\\ If  $a(x)=0$, then $\phi(x,t)= t^p$, whence 
$$\phi_n(x,t) \approx t^{\frac {np}{n-p}} \quad \text{for $t \geq 0$.}$$
Let us consider the nontrivial case when $a(x)\neq 0$.
 Therefore,  
\begin{align}\label{sep152}
H_n(x,t) =
 \bigg(\int_0^t \bigg(\frac \tau{ \tau^p + a(x)\tau^q  }\bigg)^{\frac 1{n-1}}\, d\tau\bigg)^{\frac {1}{n'}} 
 = 
a(x)^{\frac{n-p}{(p-q)n}} \bigg(\int_0^{ta(x)^{\frac 1{q-p}}} \frac {ds}{ (s^{p-1} + s^{q-1})^{\frac 1{n-1}}  }\,\bigg)^{\frac {1}{n'}} 
\end{align}
for $x \in \RRn$ and $t \geq 0$. Define the function $F: [0, \infty) \to [0, \infty)$ as 
\begin{equation}\label{F} F(r) = \bigg(\int_0^{r} \frac {ds}{ (s^{p-1} + s^{q-1})^{\frac 1{n-1}}  }\,\bigg)^{\frac {1}{n'}} \quad \text{for $r \geq 0$.}
\end{equation}
Hence,
\begin{equation}\label{sep154}
H^{-1}_n(x,t) = a(x)^{\frac 1{p-q}} F^{-1}\big(t a(x)^{\frac{n-p}{n(q-p)}}\big)\qquad \text{for $x \in \RRn$ and $t \geq 0$.}
\end{equation}
Assume first that $q <n$. Then the function $F$
 obeys
\begin{equation}\label{sep153}
F^{-1}(t) \approx \begin{cases} t^\frac{n}{n-q} & \qquad \text{if $t \geq 1$}
\\ t^\frac n{n-p}  & \quad \text{if $0 \leq t <1$,}
\end{cases}
\end{equation}
with equivalence constants depending on $p$, $q$ and $n$.
From equations \eqref{sep154} and  \eqref{sep153}, one can deduce   that
\begin{equation}\label{sep155}
\phi_n(x,t) = \phi(x, H^{-1}_n(x,t)) \approx t^{\frac{np}{n-p}} + a(x)^{\frac n{n-q}} t^{\frac{nq}{n-q}} \quad \text{for $x \in \RRn$ and $t \geq 0$.}
\end{equation}
This recovers \cite[Proposition 3.4]{Ho23} in the case of constant exponents.
\\ Assume next that $q=n$. Let $F$ be defined as in \eqref{F}, with $q=n$. Then, 
\begin{equation}\label{sep156}
F^{-1}(t) \approx \begin{cases} e^{t^{n'}} & \quad \text{if $t \geq 1$}
\\ t^\frac n{n-p}  & \quad \text{if $0 \leq t <1$.}
\end{cases}
\end{equation}
Hence, 
\begin{equation}\label{sep157}
H^{-1}_n(x,t) = a(x)^{\frac 1{p-n}} F^{-1}\big(t a(x)^{\frac{1}{n}}\big)\approx 
\begin{cases}  a(x)^\frac 1{p-n} e^{a(x)^\frac 1{n-1} t^{n'}} & \quad \text{if $t a(x)^{\frac{1}{n}} \geq 1$}
		\\
 t^\frac n{n-p} & \quad \text{if $t a(x)^{\frac{1}{n}} <1$,}
\end{cases}
\end{equation}
and 
\begin{equation}\label{sep158}
\phi_n(x,t) = \phi(x, H^{-1}_n(x,t)) \approx 
\begin{cases}  a(x)^\frac p{p-n} e^{na(x)^\frac 1{n-1} t^{n'}} & \quad \text{if $t a(x)^{\frac{1}{n}} \geq 1$}
		\\
 t^\frac {np}{n-p} & \quad \text{if $t a(x)^{\frac{1}{n}} <1$.}
\end{cases}
\end{equation}
Therefore,
\begin{equation}\label{sep158b}
\phi_n(x,t)
 \simeq  t^\frac {np}{n-p} + a(x)^\frac p{p-n} \bigg(e^{a(x)^\frac 1{n-1} t^{n'}} - \sum_{k=0}^{\lceil\frac{p(n-1)}{n(n-p)}\rceil}\frac{(a(x)^\frac 1n t)^{n'k}}{k!}\bigg) \quad \text{for $x \in \RRn$ and $t \geq 0$,}
\end{equation}
where $\lceil\cdot\rceil$ stands for integer part.
\\
 Finally, suppose that $q>n$. Set 
$$t_\infty = \bigg(\int_0^{\infty} \frac {ds}{ (s^{p-1} + s^{q-1})^{\frac 1{n-1}}  }\,\bigg)^{\frac {1}{n'}} .$$
 One can verify that
\begin{equation}\label{sep170}
F^{-1}(t) \approx \begin{cases} \infty & \quad \text{if $t \geq t_\infty$}
\\  (t_\infty -t)^{-\frac {n-1}{q-n}} & \quad \text{if $t_\infty/2 \leq t  <t_\infty$}
\\ t^\frac n{n-p}  & \quad \text{if $0 \leq t <t_\infty/2$}
\end{cases}
\approx  \
 \begin{cases} \infty & \quad \text{if $t \geq t_\infty$}
\\  t^\frac n{n-p} (t_\infty -t)^{-\frac {n-1}{q-n}} & \quad \text{if $0\leq t  <t_\infty$.}
\end{cases}
\end{equation}
Hence, by equation \eqref{sep154},
\begin{align}\label{sep171}
H^{-1}(x,t) & \approx \begin{cases} \infty & \quad \text{if $t a(x)^{\frac{n-p}{n(q-p)}} \geq t_\infty$}
\\  a^{\frac 1{p-q}}\Big(t a(x)^{\frac{n-p}{n(q-p)}}\Big)^{\frac{n}{n-p}} (t_\infty -t a(x)^{\frac{n-p}{n(q-p)}})^{-\frac {n-1}{q-n}} & \quad \text{if $0 \leq t a(x)^{\frac{n-p}{n(q-p)}}   <t_\infty$}
\end{cases}
\\ \nonumber& 
\\ \nonumber &= \begin{cases} \infty & \quad\quad\quad\quad \quad\quad\quad\quad\quad \text{if $t a(x)^{\frac{n-p}{n(q-p)}} \geq t_\infty$}
\\  t^{\frac{n}{n-p}} (t_\infty -t a(x)^{\frac{n-p}{n(q-p)}})^{-\frac {n-1}{q-n}} &\quad\quad\quad\quad\quad\quad\quad\quad \quad \text{if $0 \leq t a(x)^{\frac{n-p}{n(q-p)}}   <t_\infty$.}
\end{cases}
\end{align}
Thereby, 
\begin{align}\label{sep172}
\phi_n(x,t) =\phi(x,H^{-1}(x,t)) & \approx   \begin{cases} \infty & \quad \text{if $t  \geq t_\infty a(x)^{\frac{n-p}{n(p-q)}}$}
\\ \\  \displaystyle
\frac{a^{\frac p{p-q}}\Big(t a(x)^{\frac{n-p}{n(q-p)}}\Big)^{\frac{np}{n-p}}}{ (t_\infty -t a(x)^{\frac{n-p}{n(q-p)}})^{\frac {(n-1)p}{q-n}} }
 + \frac{a^{\frac p{p-q}}\Big(t a(x)^{\frac{n-p}{n(q-p)}}\Big)^{\frac{nq}{n-p}}}{ (t_\infty -t a(x)^{\frac{n-p}{n(q-p)}})^{\frac {(n-1)q}{q-n}} } 
 & \quad \text{if $0 \leq t     <t_\infty a(x)^{\frac{n-p}{n(p-q)}}$.}
\end{cases}
\\ \nonumber &
\\ \nonumber &
 \approx 
  \begin{cases} \infty & \quad  \quad\quad\quad\quad \quad\quad\quad\quad \quad\quad\quad\quad\text{if $t   \geq t_\infty a(x)^{\frac{n-p}{n(p-q)}}$}
\\   
\displaystyle \frac{t ^{\frac{np}{n-p}}}{ (t_\infty -t a(x)^{\frac{n-p}{n(q-p)}})^{\frac {(n-1)q}{q-n}} }
 & \quad  \quad\quad\quad\quad \quad\quad\quad\quad \quad\quad\quad\quad \text{if $0 \leq t    <t_\infty a(x)^{\frac{n-p}{n(p-q)}}$.}
\end{cases}
\end{align}
}

\end{example}

\section{Weak-type estimates for Riesz potentials}\label{sec:riesz}

This section is devoted to a weak-type inequality for Riesz potentials in Musielak-Orlicz spaces, which plays a crucial role in the proof of our Sobolev inequalities.

Given $\alpha \in (0,n)$, we denote by $I_\alpha$ the Riesz potential operator, defined as 
\begin{equation}\label{riesz}
I_\alpha f (x) = \int_{\RRn}\frac{f(y)}{|x-y|^{n-\alpha}}\,dy \qquad \text{for $x\in \RRn$,}
\end{equation}
for $f \in L^0(\RRn)$.

%

The  definition of the generalized Young function  $\phi_{\frac n\alpha}$ that enters the weak-type inequality for $I_\alpha$ in the space $L^{\phix}(\RRn)$ extends that of $\phi_n$  to all  $\alpha \in (0,n)$. 
Assume that $\phi$ is a generalized Young function satisfying conditions (A0), (A1) and (A2), and such that
\begin{equation}\label{conv0a}
\int_0 \bigg(\frac t{\phi_\infty(t)}\bigg)^{\frac \alpha{n-\alpha}}\, dt <\infty,
\end{equation}
a condition which will be shown to be necessary for any weak-type inequality for  $I_\alpha$  to hold in  $L^{\phix}(\RRn)$.
\\ The Sobolev conjugate of order $\alpha$ of $\phi$ 
is defined 
as 
\begin{equation}\label{sobconja}
\phi_{\frac n\alpha}  (x,t) =  \overline \phi\big (x,H_{\frac n\alpha} ^{-1}(x,t)\big) \qquad \text{for $x\in\RRn$ and $t \geq 0$,}
\end{equation}
where $\overline \phi$ is the normalized version of $\phi$ given by \eqref{july7}, and the function $H_{\frac n\alpha} : \RRn \times [0, \infty) \to [0, \infty)$ obeys
\begin{equation}\label{Ha}
H_{\frac n\alpha} (x,t) = \bigg(\int_0^t \bigg(\frac \tau{ \overline \phi(x,\tau)}\bigg)^{\frac \alpha{n-\alpha}}\, d\tau\bigg)^{\frac {n-\alpha}n} \quad \text{for $x \in \RRn$ and $t \geq 0$.}
\end{equation}
Owing to  assumption \eqref{conv0a} and property \eqref{july5} of normalized generalized Young functions,   the function $H$ is finite-valued for $x \in \RRn$ and $t \geq 0$. The definition of the inverse  $H_{\frac n\alpha} ^{-1}(x,\cdot)$ is analogous to that of $H_{n} ^{-1}(x,\cdot)$,  explained in the previous section. Plainly, choosing $\alpha =1$ in \eqref{sobconja} and \eqref{Ha} reproduces the function $\phi_n$ given by \eqref{sobconj}.

\begin{theorem} {\rm\bf{[Weak-type inequality for Riesz potentials]}} \label{lemma12} Let $\alpha \in (0,n)$ and let $\phi $ be a  generalized Young function satisfying 
conditions
(A0), (A1), (A2), and 
\eqref{conv0a}. 
Then, there exists a positive constant  $c=c(n, \alpha, \beta, h)$  such that
\begin{equation}\label{176'}
\int_{\{|I_\alpha f|>t\}}\phi_{\frac n\alpha}(x,ct)\, dx \leq \int_{\RRn}\overline \phi(x, |f(x)|)\, dx \quad \text{for $t \geq 0$,}
\end{equation}
if $\|f\|_{L^{\phi(\cdot)}(\RRn)}\leq 1$. Here,  $\beta$ denotes the constant from  conditions (A0) and (A1), and $h$ the function from condition (A2). In particular, if $\phi= \overline \phi$, then the constant $c$ in inequality \eqref{176'} only depends on $n$, $\alpha$, and $\beta$.
\end{theorem}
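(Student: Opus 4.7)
The plan is to reduce first to the normalized setting. By Proposition~\ref{replacephi}, $L^{\phi(\cdot)}(\RRn)$ and $L^{\overline\phi(\cdot)}(\RRn)$ coincide up to equivalent norms, with constants depending only on $\beta$ and $h$, so after rescaling $f$ by such a constant one may assume that $\phi=\overline\phi$ is normalized and that the unit-ball property gives $\int_{\RRn}\phi(x,|f(x)|)\,dx \leq 1$. It then suffices to prove \eqref{176'} in this normalized setting with a constant $c$ depending only on $n$, $\alpha$, $\beta$.

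For fixed $x\in\RRn$ and $t>0$, I would decompose the potential at a radius $r=r(x,t)$ to be chosen:
$$
I_\alpha f(x) = \int_{B(x,r)}\frac{|f(y)|}{|x-y|^{n-\alpha}}\,dy + \int_{\RRn\setminus B(x,r)}\frac{|f(y)|}{|x-y|^{n-\alpha}}\,dy.
$$
For the far piece, Hölder's inequality \eqref{holder} together with the identity \eqref{6} applied to $\widetilde\phi$ gives a bound of the form $r^\alpha\,\phi^{-1}(x,|B(x,r)|^{-1})$ times a constant, after computing $\|\,|x-\cdot|^{-(n-\alpha)}\chi_{\RRn\setminus B(x,r)}\|_{L^{\tp(\cdot)}}$ by layer-cake, recognising in the resulting integral precisely the quantity $H_{\frac n\alpha}(x,\phi^{-1}(x,|B(x,r)|^{-1}))^{n/(n-\alpha)}$ from \eqref{Ha}. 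The radius $r$ is now chosen so that this bound equals $t/2$; by definition \eqref{sobconja}--\eqref{Ha} of $\phi_{\frac n\alpha}$, this prescription is equivalent to
$$
\phi_{\frac n\alpha}(x, c\,t) \;\simeq\; |B(x,r)|^{-1}
$$
for an absolute constant $c$. Condition \eqref{conv0a} is exactly what makes $H_{\frac n\alpha}(x,\cdot)$ finite-valued so this inversion is legitimate.

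If $|I_\alpha f(x)|>t$, then the near piece necessarily exceeds $t/2$. A classical modular weak-type estimate for fractional averages over $B(x,r)$, combined with the oscillation control \eqref{feb2} of Proposition~\ref{feb1} on $B(x,r)$ (applicable since $|B(x,r)|^{-1}$ has been matched to a value of $\phi^{-1}(x,\cdot)$ in the admissible range), yields the pointwise bound
$$
\phi_{\frac n\alpha}(x,c\,t) \;\lesssim\; \frac{1}{|B(x,r)|}\int_{B(x,r)}\phi(y,|f(y)|)\,dy
$$
on $\{|I_\alpha f|>t\}$. Integrating this over $x$ in the level set and swapping the order of integration via Fubini (the pair $(x,y)$ with $|x-y|<r(x,t)$ has finite-overlap projection onto $y$ by a standard covering argument), the factor $|B(x,r)|^{-1}$ is absorbed and one obtains the global modular bound $\int_{\RRn}\phi(y,|f(y)|)\,dy$ on the right of \eqref{176'}.

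The main obstacle is the interplay between the local and global regimes. On balls with $|B(x,r)|\leq 1$ the control of the $x$-dependence comes from \eqref{feb2}, but when $r$ is large—which happens for small $t$ or when $\phi_\infty$ is relevant—one must invoke instead condition (A2) in its equivalent form (A2''') from Definition~\ref{def:axioms'}, tolerating an additive $h(x)+h(y)$ inside $\phi^{-1}$. The term $h\in L^1\cap L^\infty$ contributes only a bounded additive correction that can be absorbed into the implicit constant by a further rescaling of $f$, at the cost of making $c$ depend on $h$ in addition to $n,\alpha,\beta$. Pinning down the radius $r(x,t)$ so that the two halves of the splitting balance exactly at the level dictated by $H_{\frac n\alpha}$, and verifying that the matching produces $\phi_{\frac n\alpha}$ and not merely a weaker function, is the delicate bookkeeping step.
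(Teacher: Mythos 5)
Your overall strategy mirrors the paper's quite closely: reduce to the normalized $\overline\phi$, split $I_\alpha f(x)$ at a radius, bound the far piece by H\"older against the $\widetilde\phi$-norm of the truncated kernel, recognize the balance level as $H_{\frac n\alpha}$ (which is why $\phi_{\frac n\alpha}$ appears), obtain a modular bound from the near piece, and finish with a covering/Fubini argument. The paper arranges these steps via its Lemmas~\ref{lemma2}, \ref{lemma4}, \ref{lemma7}, \ref{cistro}, \ref{lemma9}, \ref{pro:maxfunction}, \ref{lemma11}, plus the Besicovitch covering theorem. Two points in your sketch deviate from what actually carries the weight.

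First, your final paragraph is based on a misconception. After replacing $\phi$ by $\overline\phi$, property~\eqref{july4} from Definition~\ref{normalized} holds on \emph{every} ball, with no restriction to $|B|\leq 1$: this is precisely the content of normalization, achieved because $\overline\phi(x,t)=\phi_\infty(t)$ for $t\in[0,1]$, so the small-$t$/large-ball regime is $x$-independent by construction. Consequently, \eqref{feb2} applies uniformly and there is no need to invoke (A2''') again inside the argument or to absorb an extra additive $h(x)+h(y)$; (A2) has already been spent in Proposition~\ref{replacephi}. Your ``delicate bookkeeping'' in the large-$r$ regime is thus extra, avoidable work, and if you did try to push (A2''') through you would find it produces logarithmic-type losses that do not absorb cleanly into the constant.

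Second, the claimed pointwise modular bound
\[
\phi_{\frac n\alpha}(x,ct)\;\lesssim\;\dashint_{B(x,r(x,t))}\phi(y,|f(y)|)\,dy\qquad\text{on }\{|I_\alpha f|>t\}
\]
over the \emph{prescribed} ball $B(x,r(x,t))$ is not what the splitting gives you. The near-piece estimate (your ``classical modular weak-type estimate'', which is essentially Lemma~\ref{pro:maxfunction} here) only produces such a bound over a ball $B'$ where the Hardy--Littlewood maximal average $M_{B'}f$ is nearly $Mf(x)$, and this ball $B'$ has no reason to coincide with (or be comparable to) $B(x,r(x,t))$; passing from $B'$ to $B(x,r(x,t))$ can lose an unbounded factor. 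This is exactly why the paper introduces the maximal function explicitly, picks a good ball $B_z$ for each point of the level set (Lemma~\ref{lemma11}), and only \emph{then} applies Besicovitch to those good balls with bounded overlap. Your Fubini-with-``finite overlap projection'' gestures at this, but as written it integrates against the wrong, $t$-determined ball. Moreover, the step where you ``recognise'' the $\widetilde\phi$-norm of the truncated kernel as $H_{\frac n\alpha}$ is genuinely non-trivial -- it is the content of \cite[Lemma~2]{CianchiStroff}, rephrased as Lemma~\ref{cistro} in the paper -- and should not be presented as an immediate layer-cake identification.
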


\begin{remark}\label{rem-oct5}{\rm It will be clear from the proof that inequality \eqref{176'} also holds if $\overline \phi$ is replaced with any normalized generalized Young function satisfying condition \eqref{conv0a}, and $\phi_{\frac n\alpha}$ is  directly defined by the use of $\phi$, instead of $\overline \phi$, in \eqref{sobconja} and \eqref{Ha}.}
\end{remark}

The necessity of assumption \eqref{conv0a} in Theorem \ref{lemma12} is shown by the next proposition. 

\begin{proposition}{\rm\bf{ [Necessity of condition \eqref{conv0a}]}}\label{optweak}
Let $\alpha$ and  $\phi $   be as in Theorem \ref{lemma12}.
Assume that there exist a generalized Young function $\vartheta$ and a positive constant $c$ such that 
\begin{equation}\label{optweak1}
\int_{\{|I_\alpha f|>t\}}\vartheta (x,ct)\, dx \leq \int_{\RRn}\overline \phi(x, |f(x)|)\, dx \quad \text{for $t \geq 0$,}
\end{equation}
if $\|f\|_{L^{\phi(\cdot)}(\RRn)}\leq 1$. Then, condition \eqref{conv0a} must be fulfilled.
\end{proposition}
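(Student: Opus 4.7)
Plan. The argument proceeds by contrapositive: assume that \eqref{conv0a} fails, i.e.\ $\int_0 (s/\phi_\infty(s))^{\alpha/(n-\alpha)}\, ds = +\infty$, and derive a contradiction with \eqref{optweak1}, irrespective of the choice of the generalized Young function $\vartheta$. By Proposition \ref{replacephi}, I may assume $\phi = \overline\phi$ is normalized; in particular, $\phi(x,t) = \phi_\infty(t)$ for every $x\in\RRn$ and every $t \in [0,1]$, so that on functions with values in $[0,1]$ the Musielak-Orlicz modular reduces to the classical Orlicz modular associated with $\phi_\infty$, and the problem is effectively classical. The plan is to exhibit a single bad test function $f$ with $\|f\|_{L^{\phi(\cdot)}(\RRn)}\leq 1$ for which $I_\alpha f \equiv +\infty$ on $\RRn$, and then to show that this forces \eqref{optweak1} to fail for every generalized Young function $\vartheta$.

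The core step is the construction, inspired by the Orlicz-space treatment in \cite{cianchi_JLMS}, of a non-increasing radial profile $F:[0,\infty)\to[0,1]$, with $f(x):=F(|x|)$, satisfying
\begin{align*}
\int_{\RRn}\phi_\infty(f)\,dx = \omega_{n-1}\int_0^\infty \phi_\infty(F(r))\,r^{n-1}\,dr <\infty, \qquad \int^\infty F(r)\,r^{\alpha-1}\,dr = +\infty.
\end{align*}
After the substitution $\sigma = \omega_n r^n$, this reduces to finding a non-increasing $G:(0,\infty)\to [0,1]$ with $\int^\infty \phi_\infty(G(\sigma))\,d\sigma < \infty$ and $\int^\infty G(\sigma)\,\sigma^{\alpha/n-1}\,d\sigma = +\infty$; writing $G(\sigma) = \phi_\infty^{-1}(h(\sigma))$ for a judicious non-increasing $h \in L^1(0,\infty)$, the divergence $\int_0 (s/\phi_\infty(s))^{\alpha/(n-\alpha)}\,ds = \infty$ is precisely what permits the two competing requirements on $h$ to coexist (in the power case $\phi_\infty(t)\simeq t^p$ with $p \geq n/\alpha$ one may take $h(\sigma)\sim \sigma^{-1}(\log\sigma)^{-1-\epsilon}$ with an appropriately small $\epsilon>0$; logarithmic corrections handle the critical $p=n/\alpha$). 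Rescaling by a small multiplicative constant if necessary, I arrange $\|f\|_{L^{\phi(\cdot)}(\RRn)}\leq 1$ while preserving $f\leq 1$, radial, non-increasing, and (as shown below) the property that $I_\alpha f \equiv +\infty$.

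For such an $f$ and any $x \in \RRn$, the elementary inequality $|x-y|^{\alpha-n}\geq 2^{\alpha-n}|y|^{\alpha-n}$ for $|y|\geq 2|x|$ gives
\begin{align*}
I_\alpha f(x) \geq 2^{\alpha-n}\int_{\{|y|\geq 2|x|\}}F(|y|)\,|y|^{\alpha-n}\,dy = c_n \int_{2|x|}^\infty F(r)\,r^{\alpha-1}\,dr = +\infty,
\end{align*}
so $I_\alpha f \equiv +\infty$ on $\RRn$. Applying \eqref{optweak1}, for every $t>0$,
\begin{align*}
\int_{\RRn}\vartheta(x,ct)\,dx = \int_{\{|I_\alpha f|>t\}}\vartheta(x,ct)\,dx \leq \int_{\RRn}\overline\phi(x,|f(x)|)\,dx < \infty.
\end{align*}
Since $\vartheta(x,\cdot)$ is a Young function for a.e.~$x$, $\vartheta(x,ct)\nearrow +\infty$ as $t\to +\infty$ for a.e.~$x$. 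Monotone convergence then forces $\int_{\RRn}\vartheta(x,ct)\,dx \to +\infty$, contradicting the uniform finite upper bound and completing the argument.

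The principal obstacle is the construction of the profile $G$ from the mere divergence in \eqref{conv0a}. While elementary in the power case (with logarithmic corrections at $p=n/\alpha$), the general situation requires a Hardy-type duality to convert the divergence of $\int_0 (s/\phi_\infty(s))^{\alpha/(n-\alpha)}\,ds$ into the existence of $G$ satisfying both integrability conditions; this is the analytical heart of the argument and the place where I explicitly invoke (and adapt) the classical Orlicz-space technique from \cite{cianchi_JLMS}. Once $f$ is in hand, the remaining monotone-convergence step is routine.
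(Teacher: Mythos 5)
Your argument is correct in outline but takes a genuinely different route from the paper's. You argue by contrapositive: if \eqref{conv0a} fails, you build a single radial non-increasing $f$ with $f \leq 1$, $\|f\|_{L^{\phi(\cdot)}(\RRn)}\leq 1$ (using the normalization $\overline\phi(x,t)=\phi_\infty(t)$ for $t\leq 1$ to reduce to the classical Orlicz modular), yet $I_\alpha f\equiv+\infty$; then \eqref{optweak1} forces $\int_{\RRn}\vartheta(x,ct)\,dx\leq 1$ for every $t$, which is impossible by monotone convergence since $\vartheta(x,ct)\nearrow\infty$ a.e. The paper instead works forward: it defines $A(t)=\sup_x\overline\phi(x,t)$ (which coincides with $\phi_\infty$ for small $t$), plugs radial test functions $g(\omega_n|x|^n)$ into \eqref{optweak1}, upgrades the weak-type inequality to a modular (strong-type) one via the $2^j$-level discretization \eqref{optweak7}--\eqref{optweak16}, infers from \eqref{revholder} that $\|s^{-(n-\alpha)/n}\|_{L^{\widetilde A}(\omega_n,\infty)}<\infty$, and then translates this to \eqref{conv0a} using the equivalence in \cite[Lemma 2.3]{cianchi_ibero}.

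The two proofs ultimately rest on the same duality fact -- whether $s^{-(n-\alpha)/n}$ lies in $L^{\widetilde{\phi_\infty}}$ near infinity, which \cite[Lemma 2.3]{cianchi_ibero} relates to $\int_0(t/\phi_\infty(t))^{\alpha/(n-\alpha)}\,dt$. What you call the ``analytical heart'' (constructing the profile $G$ from the divergence of that integral) is precisely the contrapositive of this duality: if $s^{-(n-\alpha)/n}\notin L^{\widetilde{\phi_\infty}}$, then the Köthe dual characterization produces $g\in L^{\phi_\infty}$ with $\int g(s)s^{-(n-\alpha)/n}\,ds=\infty$, and truncating $g$ at $1$ and rescaling preserves the divergence (since $s^{-(n-\alpha)/n}$ is locally bounded near $s=1$ and $g$ is locally integrable) while making $g\leq 1$ and the modular $\leq 1$. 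Once this is spelled out, your argument is complete and, for some readers, conceptually leaner: it exhibits one explicit explosive test function rather than extracting quantitative information from the weak inequality. The paper's weak-to-strong truncation step, by contrast, is reused machinery (the same device appears in the proof of Theorem \ref{thm:main}), so it fits more naturally into the paper's architecture. You should, however, make the Köthe-duality step explicit rather than gesturing at ``Hardy-type duality'' and the power-case heuristic, since for a general Young function $\phi_\infty$ the explicit formula $h(\sigma)\sim\sigma^{-1}(\log\sigma)^{-1-\epsilon}$ does not apply; the clean route is exactly the abstract duality argument above.
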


For convenience, we split the main steps of the proof of Theorem \ref{lemma12} into separate lemmas.
Unless otherwise stated, in these lemmas and the in whole remaining part of this section, $\phi$ denotes a normalized generalized Young function fulfilling condition \eqref{conv0a},
and $\beta$  the constant from Definition \ref{normalized}.  The lemmas will then be applied with  $\phi = \overline \phi$  in accomplishing the proof of Theorem \ref{lemma12}.

We begin by introducing some notations and definitions.
Let $k\in \RR$ be such that
\begin{equation}\label{k1}
k\geq \frac 4\beta,
\end{equation}
and let $\sigma \in \RR$ be such that
\begin{align}\label{sigmanew}
\sigma \geq \omega_n \max\Big\{\frac{1}{2^n}, \frac n{n-\alpha}\Big\},
\end{align}
where $\omega_n$ denotes the Lebesgue measure of the unit ball in $\RRn$.
Define the generalized Young function $\psi$ as
\begin{equation}\label{30}
\psi (x,t) = \sigma \, t^{\frac n{n-\alpha}}\int_0^t\frac{\widetilde \phi (x, k\tau)}{\tau ^{1+\frac n{n-\alpha}}}\, d\tau \quad \text{for $x\in \RRn$ and $t \geq 0$,}
\end{equation}
 and the function $\lambda : \RRn \times [0, \infty) \to [0, \infty]$ as
\begin{equation}\label{31}
\lambda (x,\delta) = \frac 1{\delta ^{n-\alpha}\psi^{-1}(x, \delta^{-n})} \quad \text{if $\delta >0$,}
\end{equation}
and $\lambda (x,0)= \lim_{\delta \to 0^+} \lambda (x,\delta)$, for $x\in \RRn$. Notice that the latter limit exists and equals  $\Big(\sigma  \int_0^\infty\frac{\widetilde \phi (x, k\tau)}{\tau ^{1+\frac n{n-\alpha}}}\, d\tau\Big)^{\frac {n-\alpha}n}$.

In what follows, we shall   make use of the fact that, if $h: [0, \infty) \to [0, \infty)$ is a concave function such that $h(0)=0$, then
\begin{equation}\label{5}
\min\{1, c\}h(t) \leq h(ct) \leq \max\{1, c\}h(t) \quad\text{for  $c, t \geq 0$.}
\end{equation}

\begin{lemma}\label{lemma1} Let $x\in \RRn$ and let $\delta >0$. If 
\begin{equation}\label{33}
0\leq t \leq \frac 1{\lambda (x,\delta) \delta^{n-\alpha}},
\end{equation}
then
\begin{equation}\label{32}
 \widetilde \phi \Big(x+\frac{\nu}{(\lambda (x,\delta)t)^{\frac 1{n-\alpha}}}, t\Big)
 \leq \widetilde \phi\Big(x,\frac {2t}\beta\Big)
 \quad \text{for $\nu \in \mathbb S^{n-1}$.}
\end{equation}
\end{lemma}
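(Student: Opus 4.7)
My plan is to deduce \eqref{32} by a direct application of Proposition~\ref{feb1}(iii). Take $B$ to be the closed ball centered at $x$ of radius $(\lambda(x,\delta)\,t)^{-1/(n-\alpha)}$, so that both $x$ and the translate $x+\nu(\lambda(x,\delta)\,t)^{-1/(n-\alpha)}$ belong to $B$, while $|B|=\omega_n(\lambda(x,\delta)\,t)^{-n/(n-\alpha)}$. The proposition will yield \eqref{32} as soon as the admissibility condition
\[
\widetilde{\phi}\bigl(x,\tfrac{2t}{\beta}\bigr)\leq |B|^{-1}=\omega_n^{-1}(\lambda(x,\delta)\,t)^{n/(n-\alpha)}
\]
is established, so everything reduces to this pointwise estimate.

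To obtain it I would sandwich $\psi(x,t)$ between $(\lambda(x,\delta)\,t)^{n/(n-\alpha)}$ from above and a constant multiple of $\widetilde{\phi}(x,2t/\beta)$ from below. For the upper bound, the hypothesis $t\leq 1/(\lambda(x,\delta)\,\delta^{n-\alpha})$ rewrites, via \eqref{31}, as $t\leq \psi^{-1}(x,\delta^{-n})$, i.e. $\psi(x,t)\leq \delta^{-n}$. The ratio
\[
\frac{\psi(x,s)}{s^{n/(n-\alpha)}}=\sigma\int_0^s\frac{\widetilde{\phi}(x,k\tau)}{\tau^{1+n/(n-\alpha)}}\,d\tau
\]
is nondecreasing in $s$, and comparing it at $s=t$ and $s=\psi^{-1}(x,\delta^{-n})$ yields $\psi(x,t)\leq (\lambda(x,\delta)\,t)^{n/(n-\alpha)}$. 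For the lower bound, the change of variables $s=k\tau$ in \eqref{30} gives
\[
\psi(x,t)=\sigma(kt)^{n/(n-\alpha)}\int_0^{kt}\widetilde{\phi}(x,s)\,s^{-1-n/(n-\alpha)}\,ds,
\]
and $k\geq 4/\beta$ ensures $2t/\beta\leq kt/2$. Restricting the integral to $[2t/\beta,kt]$, invoking the monotonicity of $\widetilde{\phi}(x,\cdot)$, and using
\[
\int_{2t/\beta}^{kt}s^{-1-n/(n-\alpha)}\,ds=\tfrac{n-\alpha}{n}\bigl[(2t/\beta)^{-n/(n-\alpha)}-(kt)^{-n/(n-\alpha)}\bigr]\geq \tfrac{n-\alpha}{2n}(2t/\beta)^{-n/(n-\alpha)}
\]
together with $(k\beta/2)^{n/(n-\alpha)}\geq 2$, I obtain $\psi(x,t)\geq \tfrac{\sigma(n-\alpha)}{n}\,\widetilde{\phi}(x,2t/\beta)$.

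Combining the two bounds for $\psi(x,t)$ produces $\widetilde{\phi}(x,2t/\beta)\leq \tfrac{n}{\sigma(n-\alpha)}(\lambda(x,\delta)\,t)^{n/(n-\alpha)}$, and hypothesis \eqref{sigmanew} forces $\sigma(n-\alpha)/n\geq \omega_n$ (since $n/(n-\alpha)\geq 1>2^{-n}$), establishing the admissibility condition. Proposition~\ref{feb1}(iii) then delivers \eqref{32}. The only delicate point in the plan is the careful calibration of constants: the choice $k\geq 4/\beta$ in \eqref{k1} is precisely what is needed to generate the interval $[2t/\beta,kt/2]$ on which the lower bound for $\psi$ can be harvested with the correct shift $2/\beta$, while \eqref{sigmanew} guarantees the matching prefactor $\omega_n^{-1}$ dictated by the volume of $B$.
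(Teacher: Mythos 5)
Your argument is correct and follows essentially the same route as the paper: reduce \eqref{32} to an application of Proposition~\ref{feb1}(iii) for a suitable ball, then control $\widetilde\phi(x,2t/\beta)$ by sandwiching $\psi(x,t)$ between $(\lambda(x,\delta)t)^{n/(n-\alpha)}$ (using monotonicity of $s\mapsto\psi(x,s)/s^{n/(n-\alpha)}$ together with the hypothesis $t\le\psi^{-1}(x,\delta^{-n})$) and a fixed multiple of $\widetilde\phi(x,2t/\beta)$ (chopping the defining integral and invoking $k\ge4/\beta$), with \eqref{sigmanew} calibrating the constants. The only cosmetic difference is your choice of a ball centered at $x$ of radius $(\lambda t)^{-1/(n-\alpha)}$, whereas the paper implicitly uses a ball of half that radius centered at the midpoint (hence the factor $2^n/\omega_n$ in \eqref{36}); both lie within the slack provided by \eqref{sigmanew}, where the binding constraint is $\sigma\ge n\omega_n/(n-\alpha)$ in either case.
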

\begin{proof} Fix  $x\in \RRn$ and  $\delta >0$, and, for ease of notation, denote $\lambda (x,\delta)$ simply by $\lambda$ throughout this proof. Since
\begin{equation}\label{34}
\frac 1{\lambda \delta^{n-\alpha}} = \psi ^{-1}(x, \delta^{-n}),
\end{equation}
inequality  \eqref{33} implies that
\begin{equation}\label{35}
t\leq \psi^{-1}(x, \delta^{-n}).
\end{equation}
Thus, owing to property \eqref{25},  inequality in \eqref{32} will follow if we show that
\begin{align}\label{36}
t \leq \frac \beta 2\widetilde \phi^{-1} \Big(x, \frac{2^n (\lambda t)^{\frac{n}{n-\alpha}}}{\omega_n}\Big)
\end{align}
for $t$ as in \eqref{35}.  
We have that 
\begin{align}\label{38}
\psi(x,t) &\geq \sigma t^{\frac n{n-\alpha}}\int_{\frac t2}^t \frac{\widetilde \phi(x, k\tau)}{\tau^{1+\frac n{n-\alpha}}}\, d\tau  \geq \sigma \widetilde \phi (x, \tfrac k2 t) t^{\frac n{n-\alpha}}\int_{\frac t2}^t \frac{1}{\tau^{1+\frac n{n-\alpha}}}\, d\tau 
\\ \nonumber & = \sigma \tp (x, \tfrac k2 t) \frac{2^{\frac n{n-\alpha}}-1}{\frac n {n-\alpha}} \geq \sigma  \tp (x, \tfrac k2 t).
\end{align}
Hence, by inequalities \eqref{35} and \eqref{k1},
\begin{align}\label{39}
\tp (x, \tfrac 2\beta t)& \leq \frac 1\sigma \psi (x, \tfrac 4{k\beta}t) = \Big(\frac {4t}{k\beta} \Big)^{\frac n{n-\alpha}}\int_0^{\frac {4t}{k\beta}} \frac{\widetilde \phi(x, k\tau)}{\tau^{1+\frac n{n-\alpha}}}\, d\tau
\leq  \Big(\frac {4}{k\beta} \Big)^{\frac n{n-\alpha}} t^{\frac n{n-\alpha}} \int_0^{\frac {4}{k\beta}\psi^{-1}(x,\delta^{-n})} \frac{\widetilde \phi(x, k\tau)}{\tau^{1+\frac n{n-\alpha}}}\, d\tau
\\ \nonumber &  \leq    t^{\frac n{n-\alpha}} \int_0^{\psi^{-1}(x,\delta^{-n})} \frac{\widetilde \phi(x, k\tau)}{\tau^{1+\frac n{n-\alpha}}}\, d\tau
  = \frac{t^{\frac n{n-\alpha}}}{\sigma \delta^n \psi^{-1}(x, \delta^{-n})^{\frac n{n-\alpha}}}= \frac 1\sigma (\lambda t)^{\frac n{n-\alpha}}.
\end{align}
Thus,
\begin{align}\label{40}
t \leq \frac \beta 2 \tp^{-1}\Big(x,\frac{(\lambda t)^{\frac n{n-\alpha}}}\sigma\Big).
\end{align}
Coupling the latter inequality with \eqref{sigmanew} yields
\eqref{36}.
\end{proof}


In what follows, we denote by $B(x,r)$ the ball in $\RRn$ centered at $x\in \RRn$, with radius $r>0$.

\begin{lemma}\label{lemma2} 
Let   $x \in \RRn$ and $\delta \geq 0$. Then,
\begin{equation}\label{49}
\bigg\|\frac{\chi_{\RRn \setminus B(x,\delta)}(\cdot)}{|x-\cdot|^{n-\alpha}}\bigg\|_{L^{\tp (\cdot)}(\RR)}\leq \lambda (x, \delta).
\end{equation} 
\end{lemma}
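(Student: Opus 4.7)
By the definition of the Luxemburg norm \eqref{norm}, the claim reduces to showing that the modular
\begin{equation*}
\int_{\RRn\setminus B(x,\delta)}\widetilde\phi\bigg(y,\frac{1}{\lambda(x,\delta)\abs{x-y}^{n-\alpha}}\bigg)\,dy
\end{equation*}
is bounded by $1$. The plan is to convert this integral to polar coordinates centered at~$x$, applying Lemma~\ref{lemma1} pointwise in the angular variable to replace the $x+r\nu$ dependence in $\widetilde\phi$ by evaluation at the single point $x$ (at the price of the factor $2/\beta$), and then to recognize the radial integral as an expression involving~$\psi$.

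More concretely, using spherical coordinates $y=x+r\nu$ with $\nu\in\mathbb{S}^{n-1}$, I would set $t=t(r):=1/(\lambda(x,\delta)r^{n-\alpha})$ and observe that for $r\geq\delta$ one has $t\leq 1/(\lambda(x,\delta)\delta^{n-\alpha})$, so that the hypothesis of Lemma~\ref{lemma1} is satisfied. Since moreover $r=1/(\lambda(x,\delta)t)^{1/(n-\alpha)}$, Lemma~\ref{lemma1} yields
\begin{equation*}
\widetilde\phi\bigg(x+r\nu,\frac{1}{\lambda(x,\delta)r^{n-\alpha}}\bigg)\leq \widetilde\phi\bigg(x,\frac{2}{\beta\lambda(x,\delta)r^{n-\alpha}}\bigg).
\end{equation*}
After integrating over the sphere (picking up a factor $n\omega_n$) and changing variables from $r$ to $\tau=2/(\beta\lambda(x,\delta)r^{n-\alpha})$, a direct computation gives
\begin{equation*}
\int_{\RRn\setminus B(x,\delta)}\widetilde\phi\bigg(y,\frac{1}{\lambda(x,\delta)\abs{x-y}^{n-\alpha}}\bigg)\,dy \leq \frac{n\omega_n}{(n-\alpha)\sigma}\cdot \sigma\bigg(\frac{2}{\beta\lambda(x,\delta)}\bigg)^{\!\frac{n}{n-\alpha}}\int_0^{T}\frac{\widetilde\phi(x,\tau)}{\tau^{1+\frac{n}{n-\alpha}}}\,d\tau,
\end{equation*}
where $T:=2\psi^{-1}(x,\delta^{-n})/\beta$, using identity~\eqref{34}.

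Now comes the key step: rescaling $\tau\mapsto k\tau$ inside the integral and recalling the definition \eqref{30} of $\psi$ shows that the last factor equals $\psi(x,T/k)/T^{n/(n-\alpha)}$. Since $k\geq 4/\beta$, we have $T/k\leq\tfrac12\psi^{-1}(x,\delta^{-n})$, so convexity of $\psi(x,\cdot)$ with $\psi(x,0)=0$ gives
\begin{equation*}
\psi\bigl(x,T/k\bigr)\leq \tfrac{2}{k\beta}\psi\bigl(x,\psi^{-1}(x,\delta^{-n})\bigr)=\tfrac{2}{k\beta}\delta^{-n}.
\end{equation*}
Inserting this back, the powers of $\lambda(x,\delta)$ and $\delta$ cancel, leaving a bound of the form $\frac{2n\omega_n}{k\beta(n-\alpha)\sigma}$. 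The condition $\sigma\geq n\omega_n/(n-\alpha)$ from \eqref{sigmanew} and $k\beta\geq 4$ from \eqref{k1} then make this at most $1$.

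The main obstacle is purely bookkeeping: one has to track the interplay of the constants $k$, $\beta$, $\sigma$ and the exponent $\tfrac{n}{n-\alpha}$ through the change of variables so that everything fits neatly into the normalisation provided by \eqref{k1} and \eqref{sigmanew}. There is no additional analytic difficulty beyond the estimate already contained in Lemma~\ref{lemma1} and the convexity of~$\psi(x,\cdot)$.
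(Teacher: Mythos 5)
Your proposal is correct and follows essentially the same route as the paper: polar coordinates centered at $x$, Lemma~\ref{lemma1} to replace the $y$-dependence in $\widetilde\phi$ by evaluation at $x$, then identification of the radial integral with the function $\psi$ via the identity~\eqref{34}. The only divergence is at the final step. You carry the factor $\tfrac 2\beta$ through the change of variables and then bound $\psi(x,T/k)$ by convexity, obtaining the margin $\tfrac{2n\omega_n}{k\beta(n-\alpha)\sigma}\le \tfrac12$. The paper instead uses the monotonicity estimate $\widetilde\phi(x,\tfrac 2\beta t)\le\widetilde\phi(x,kt)$ (legitimate since $\tfrac2\beta\le k$ by~\eqref{k1}) so that the radial integral matches the definition of $\psi(x,\cdot)$ exactly and telescopes to $\tfrac{n\omega_n}{\sigma(n-\alpha)}\le 1$ without any convexity argument. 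Both lead to the same conclusion; yours wastes a factor of $\tfrac{2}{k\beta}$ but is correct. One small omission: the statement allows $\delta=0$, and the paper disposes of this case separately, noting that if $\lambda(x,0)<\infty$ the same computation applies with an infinite upper limit of integration, while if $\lambda(x,0)=\infty$ the inequality is trivial; you should add a sentence to that effect.
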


\begin{proof} Fix $x \in \RRn$. Recall that
\begin{align}\label{50} 
\bigg\|\frac{\chi_{\RRn \setminus B(x,\delta)}(\cdot)}{|x-\cdot|^{n-\alpha}}\bigg\|_{L^{\tp (\cdot)}(\RRn)} = \inf \bigg\{ \lambda>0: \int_ {\RRn \setminus B(x,\delta)} \tp \Big(y, \frac 1{\lambda |x-y|^{n-\alpha}}\Big)\, dy\leq 1\bigg\}.
\end{align}
Choose  $\lambda = \lambda (x,\delta)$ given by \eqref{31}. Assume first that $\delta>0$, and hence $\lambda (x,\delta)<\infty$.
 By inequality \eqref{32} and  assumptions \eqref{k1} and  \eqref{sigmanew},
\begin{align}\label{51}
 \int_ {\RRn \setminus B(x,\delta)}& \tp \Big(y, \frac 1{\lambda |x-y|^{n-\alpha}}\Big)\, dy = \int_\delta^\infty \int_{\mathbb S^{n-1}}\tp\Big(x+\nu r, \frac 1{\lambda r^{n-\alpha}}\Big) r^{n-1}\, d\mathcal H^{n-1}(\nu)\, dr
\\ \nonumber & = \frac 1{(n-\alpha)\lambda ^{\frac n{n-\alpha}}}
\int_0^{\frac 1{\lambda \delta^{n-\alpha}}}
\int_{\mathbb S^{n-1}}\tp\Big(x+\frac{\nu}{(\lambda t)^{\frac 1{n-\alpha}}}, t\Big)\, d\mathcal H^{n-1}(\nu)\, \frac{dt}{t^{1+\frac n{n-\alpha}}}
\\ \nonumber & \leq \frac 1{(n-\alpha)\lambda ^{\frac n{n-\alpha}}} \int_0^{\frac 1{\lambda \delta^{n-\alpha}}}
\int_{\mathbb S^{n-1}}\tp\Big(x, \tfrac 2 \beta t\Big)\, d\mathcal H^{n-1}(\nu)\, \frac{dt}{t^{1+\frac n{n-\alpha}}}
\\ \nonumber & =\frac {n\omega_n}{(n-\alpha)\lambda ^{\frac n{n-\alpha}}} \int_0^{\frac 1{\lambda \delta^{n-\alpha}}}
  \frac{\tp\big(x, \tfrac 2 \beta t\big)}{t^{1+\frac n{n-\alpha}}}\, dt 
\\ \nonumber & \leq \frac {n\omega_n}{(n-\alpha)\lambda ^{\frac n{n-\alpha}}} \int_0^{\frac 1{\lambda \delta^{n-\alpha}}}
  \frac{\tp\big(x, k t\big)}{t^{1+\frac n{n-\alpha}}}\, dt.
\end{align} 
Here, $\mathcal H^{n-1}$ denotes the $(n-1)$-dimensional Hausdorff measure.
Since 
\begin{equation}\label{oct20}
\frac {n\omega_n}{(n-\alpha)\lambda ^{\frac n{n-\alpha}}} \int_0^{\frac 1{\lambda \delta^{n-\alpha}}}
  \frac{\tp\big(x, k t\big)}{t^{1+\frac n{n-\alpha}}}\, dt  =\frac {n\omega_n}{\sigma (n-\alpha)} \delta ^n \psi \Big(x, \frac 1{\lambda \delta^{n-\alpha}}\big)= 
\frac {n\omega_n}{\sigma (n-\alpha)} \leq 1,
\end{equation}
inequality \eqref{49} follows from \eqref{51} and \eqref{oct20}. 
\\ If 
$\delta =0$ and $\lambda (x,0)<\infty$, then 
\begin{equation} \label{oct21} 
\frac {n\omega_n}{(n-\alpha)\lambda ^{\frac n{n-\alpha}}} \int_0^{\frac 1{\lambda \delta^{n-\alpha}}}
  \frac{\tp\big(x, k t\big)}{t^{1+\frac n{n-\alpha}}}\, dt  = 
\frac {n\omega_n}{\sigma (n-\alpha)} \leq 1,
\end{equation}
and inequality  \eqref{49} is a consequence of \eqref{51} and \eqref{oct21}. Finally, if  $\lambda (x,0)=\infty$, then inequality \eqref{49} holds trivially.
\end{proof}

\begin{lemma}\label{lemma3} Let $B$ be any ball in  $\RRn$ and let $x\in B$. Then,
\begin{equation}\label{80}
\|\chi_{B}\|_{L^{\phi (\cdot)}(\RRn)} \leq \frac {1}{\beta \phi^{-1}(x, |B|^{-1}) } 
\end{equation}
and
\begin{equation}\label{80'}
\|\chi_{B}\|_{L^{\tp (\cdot)}(\RRn)} \leq \frac 2{\beta \tp^{-1}(x, |B|^{-1}) }.
\end{equation}
\end{lemma}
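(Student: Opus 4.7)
The plan is to verify both bounds directly from the Luxemburg norm definition \eqref{norm} by exhibiting explicit admissible values of~$\lambda$. Since $\phi$ is assumed normalized in this section, the key structural inputs are condition~(ii) of Definition~\ref{normalized} for the first estimate and its consequence~\eqref{feb3} in Proposition~\ref{feb1} for the second.

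For the first inequality, I would set $\lambda_0 := 1/(\beta\,\phi^{-1}(x,|B|^{-1}))$ and show that $\int_B \phi(y,1/\lambda_0)\,dy \le 1$. Plugging $t=|B|^{-1}$ into \eqref{july4} (which applies since $|B|^{-1}\in [0,|B|^{-1}]$) yields
\[
\beta\,\phi^{-1}(x,|B|^{-1}) \le \phi^{-1}(y,|B|^{-1}) \quad \text{for a.e.\ }y\in B.
\]
Using monotonicity of $\phi(y,\cdot)$ and the identity $\phi(y,\phi^{-1}(y,s))\le s$, this gives $\phi(y,1/\lambda_0) \le |B|^{-1}$ for a.e.\ $y\in B$; integrating over $B$ produces modular value at most~$1$, and the definition \eqref{norm} of the norm yields $\|\chi_B\|_{L^{\phi(\cdot)}(\RRn)} \le \lambda_0$, which is exactly \eqref{80}.

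For \eqref{80'}, I would repeat the argument with $\widetilde\phi$ in place of $\phi$, choosing $\lambda_1 := 2/(\beta\,\widetilde\phi^{-1}(x,|B|^{-1}))$ and invoking \eqref{feb3} (again with $t=|B|^{-1}$, which lies in the admissible range) to get $\tfrac{\beta}{2}\,\widetilde\phi^{-1}(x,|B|^{-1}) \le \widetilde\phi^{-1}(y,|B|^{-1})$ for a.e.\ $y\in B$. Monotonicity of $\widetilde\phi(y,\cdot)$ then produces $\widetilde\phi(y,1/\lambda_1) \le |B|^{-1}$ on $B$, and integrating gives $\|\chi_B\|_{L^{\widetilde\phi(\cdot)}(\RRn)} \le \lambda_1$, which is \eqref{80'}.

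No real obstacle is anticipated: both statements are essentially an unpacking of the Luxemburg norm together with the pointwise comparison of inverses encoded in the normalization axiom (and its Young-conjugate counterpart from Proposition~\ref{feb1}). The only mild subtlety is confirming that $t=|B|^{-1}$ indeed lies in the admissible ranges $[0,|B|^{-1}]$ on which \eqref{july4} and \eqref{feb3} apply, which is immediate. The factor of~$2$ appearing in \eqref{80'} precisely reflects the factor of~$2$ loss in the bound \eqref{feb3} for $\widetilde\phi^{-1}$ compared with \eqref{july4} for $\phi^{-1}$.
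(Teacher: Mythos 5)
Your proof is correct and takes essentially the same approach the paper relies on: the paper simply cites \cite[Proposition~4.7]{HarHas17riesz} for \eqref{80} (noting it follows from \eqref{july4}) and says \eqref{80'} is analogous via \eqref{feb3}, and your argument is precisely the unpacking of that citation, using the Luxemburg norm definition with the explicit admissible $\lambda$ produced by plugging $t=|B|^{-1}$ into \eqref{july4} (resp.\ \eqref{feb3}), where the factor $2$ in \eqref{80'} comes from the $\tfrac\beta2$ in \eqref{feb3}.
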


Inequality \eqref{80} is proved in \cite[Proposition 4.7]{HarHas17riesz}, as a consequence of property \eqref{july4}. Inequality \eqref{80'} follows analogously, thanks to property \eqref{feb3}.
%

\begin{lemma}\label{lemma4}  There exists a constant $c=c(n, \alpha, \beta)$ such that
\begin{align}\label{85}
\delta^\alpha \leq 
c \frac{\lambda(x,\delta)}{\phi^{-1}(x, \delta^{-n})} \quad \text{for $x \in \RRn$ and $\delta >0$.}
\end{align}
\end{lemma}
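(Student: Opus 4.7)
\textbf{Proof plan for Lemma \ref{lemma4}.} The plan is to reformulate the claim as an inequality purely in the variable $s = \delta^{-n}$, and then to recognize it as a consequence of the fundamental Young-type estimate \eqref{6} once $\psi^{-1}$ is compared with $\widetilde{\phi}^{-1}$.

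Substituting $s = \delta^{-n}$, so that $\delta^{n-\alpha} = s^{-\frac{n-\alpha}{n}}$, the desired inequality
$$\delta^\alpha \leq c\,\frac{\lambda(x,\delta)}{\phi^{-1}(x,\delta^{-n})} = \frac{c}{\delta^{n-\alpha}\,\psi^{-1}(x,\delta^{-n})\,\phi^{-1}(x,\delta^{-n})}$$
is equivalent to
$$\psi^{-1}(x,s)\,\phi^{-1}(x,s) \leq c\,s \qquad \text{for $s>0$ and $x\in\RRn$.}$$
Since property \eqref{6} already yields $\widetilde{\phi}^{-1}(x,s)\,\phi^{-1}(x,s)\leq 2s$, it will suffice to show that
$$\psi^{-1}(x,s) \leq c'\,\widetilde{\phi}^{-1}(x,s), \qquad \text{with $c'=c'(n,\alpha,\beta)$.}$$

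First I would invoke the inequality $\psi(x,t)\geq \sigma\,\widetilde{\phi}(x,\tfrac{k}{2}t)$ that was derived in \eqref{38} during the proof of Lemma~\ref{lemma1}; this uses only the monotonicity of $\widetilde{\phi}(x,\cdot)$ together with the explicit value of the integral $\int_{t/2}^{t}\tau^{-1-\frac{n}{n-\alpha}}\,d\tau$. Applying it to $t=\psi^{-1}(x,s)$ and inverting gives
$$\tfrac{k}{2}\,\psi^{-1}(x,s) \leq \widetilde{\phi}^{-1}(x,s/\sigma)$$
(the Young conjugate $\widetilde{\phi}$ is again a Young function, so its left-continuous inverse exists and is non-decreasing).

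Next I would use that $\widetilde{\phi}^{-1}(x,\cdot)$ is concave with $\widetilde{\phi}^{-1}(x,0)=0$, together with property \eqref{5}, to obtain
$$\widetilde{\phi}^{-1}(x,s/\sigma) \leq \max\{1,\,\sigma^{-1}\}\,\widetilde{\phi}^{-1}(x,s).$$
Since $k$ and $\sigma$ are constants depending only on $n$, $\alpha$ and $\beta$ (by \eqref{k1} and \eqref{sigmanew}), combining the last two displays gives $\psi^{-1}(x,s)\leq c'\,\widetilde{\phi}^{-1}(x,s)$. Multiplying by $\phi^{-1}(x,s)$ and using \eqref{6} concludes the proof, yielding $c = 2c'$ depending only on $n$, $\alpha$, $\beta$.

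The argument is essentially routine; the main (mild) obstacle is the bookkeeping to verify that every estimate that was built into the constants $k$ and $\sigma$ in the definition of $\psi$ survives the inversion of $\psi$ and leaves a final constant depending only on $n$, $\alpha$, and $\beta$. All of this is purely algebraic once inequality \eqref{38} and property \eqref{6} are in hand.
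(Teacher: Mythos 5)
Your proposal is correct, and it takes a genuinely different route from the paper's own proof. The paper's argument is geometric: it expresses $\delta^\alpha$ as a constant multiple of the integral of the Riesz kernel $|x-y|^{\alpha-n}$ over the annulus $B(x,2\delta)\setminus B(x,\delta)$, then applies the H\"older inequality \eqref{holder}, Lemma~\ref{lemma2} (inequality \eqref{49}) to bound the $\widetilde\phi$-norm factor by $\lambda(x,\delta)$, and Lemma~\ref{lemma3} (inequality \eqref{80}) to bound $\|\chi_{B(x,2\delta)}\|_{L^{\phi(\cdot)}(\RRn)}$ by $1/(\beta\phi^{-1}(x,|B(x,2\delta)|^{-1}))$, closing with \eqref{5} to pass from $|B(x,2\delta)|^{-1}$ to $\delta^{-n}$. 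Your route is purely algebraic: it works directly from the one-line estimate $\psi(x,t)\geq\sigma\,\widetilde\phi(x,\tfrac k2 t)$ recorded in \eqref{38}, together with \eqref{5} and \eqref{6}, and never touches the Riesz kernel, the H\"older inequality, or Lemmas~\ref{lemma2} and~\ref{lemma3}. What you gain is a shorter, more self-contained proof resting on fewer preliminaries; what the paper's version offers instead is the geometric picture explaining why $\lambda(x,\delta)/\phi^{-1}(x,\delta^{-n})$ controls $\delta^\alpha$ (a duality pairing of the truncated Riesz kernel against a characteristic function), which is the heuristic that motivates the definition of $\lambda$ in the first place.

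One small point of rigor worth tightening: the phrase ``applying \eqref{38} at $t=\psi^{-1}(x,s)$ and inverting'' invokes the implication $\widetilde\phi(x,u)\leq v \Rightarrow u\leq\widetilde\phi^{-1}(x,v)$, which can fail for the left-continuous inverse when $\widetilde\phi(x,\cdot)$ has a flat piece. The safe way to obtain the same estimate is to substitute $t=\tfrac 2k\widetilde\phi^{-1}(x,s/\sigma)$ into \eqref{38}: using $\widetilde\phi\big(x,\widetilde\phi^{-1}(x,\tau)\big)=\tau$, one gets $\psi\big(x,\tfrac 2k\widetilde\phi^{-1}(x,s/\sigma)\big)\geq s$, hence $\psi^{-1}(x,s)\leq\tfrac 2k\widetilde\phi^{-1}(x,s/\sigma)$ directly from the infimum definition of $\psi^{-1}$. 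Also, your parenthetical that $k$ and $\sigma$ ``depend only on $n,\alpha,\beta$'' should be read more carefully, since \eqref{k1} and \eqref{sigmanew} are only lower bounds; but since $c'=\tfrac 2k\max\{1,\sigma^{-1}\}$ is decreasing in both $k$ and $\sigma$, it is bounded above by its value at the minimal admissible $k,\sigma$, which does depend only on $n,\alpha,\beta$, so the final constant $c=2c'$ is as claimed.
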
 
\begin{proof} Fix $x \in \RRn$. By an elementary computation using polar coordinates and inequality \eqref{holder},
\begin{align} \delta^\alpha 
 \leq c
\int_{\RRn \setminus B(x,\delta)}
\frac{\chi_{B(x,2\delta)}(y)}{|x-y|^{n-\alpha}}\,dy 
  \leq 2c \|\chi_{ B(x,2\delta)}(\cdot)\|_{L^{\phi (\cdot)}(\RRn)} \bigg\|\frac{\chi_{\RRn \setminus B(x,\delta)}(\cdot)}{|x-\cdot|^{n-\alpha}}\bigg\|_{L^{\tp (\cdot)}(\RRn)}
\end{align}
for some constant $c=c(n, \alpha)$.
Inequality \eqref{85} hence follows, via \eqref{49} and \eqref{80}. Here, one has also to make use of property \eqref{5}.
\end{proof}

 Given a ball $B \subset \RRn$ and a function $f \in L^0(\RRn)$, we set
$$M_B f =  \dashint_B|f(y)|\,dy,$$
where $\dashint$ denotes a mean-value integral.
 The centered Hardy-Littlewood maximal function $Mf\,:\, \RRn \to [0,\infty]$ is then defined as
\begin{align*}
  Mf(x) =\sup_{r>0}   M_{B(x,r)} f \qquad \text{for $x\in \RRn$.}
\end{align*}
Thanks to inequalities \eqref{holder} and \eqref{80'}, we have that $L^\phix (\RRn) \subset L^1_{\rm loc}(\RRn)$. Therefore, 
 $$Mf(x)<\infty \quad \text{ for a.e. $x\in \RRn$,}$$
 if  $f \in L^\phix (\RRn)$.
 
The following property of the averaging operator $M_B$ is a 
 consequence of \cite[Theorem~4.3.3]{HarHas19book}.

\begin{lemma}{\rm\bf{\cite{HarHas19book}}}     
  \label{pro:maxfunction}
There exists a positive constant  $\gamma=\gamma (n,\beta)$ such that 
    \begin{align}\label{average}
      \phi\big(x, \gamma M_B f  \big)
      &\leq   M_B\big( \phi(\cdot,f(\cdot))\big) 
    \end{align}
for every ball $B \subset \RRn$, every $x \in B$ and every $f \in L^\phix(\RRn)$ with $\norm{f}_{\phi(\cdot)} \leq 1$.
%
\end{lemma}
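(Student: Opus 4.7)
My plan is to establish the inequality pointwise for each fixed $x \in B$ by a split-threshold argument on $|f|$, using the averaged modular as the comparison level. Set
$$\eta := M_B\bigl(\phi(\cdot, f(\cdot))\bigr) = \dashint_B \phi\bigl(y, |f(y)|\bigr)\,dy.$$
Since $\|f\|_{L^\phix(\RRn)} \leq 1$, one has $\int_B \phi(y, |f(y)|)\,dy \leq 1$, whence $\eta \leq |B|^{-1}$. This places us exactly in the regime in which the normalization condition \eqref{july4} is available, and swapping the roles of $x$ and $y$ in \eqref{july4} gives, for a.e.\ $x, y \in B$,
$$\phi^{-1}(y, \eta) \leq \tfrac{1}{\beta}\,\phi^{-1}(x, \eta).$$

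With $x \in B$ fixed, set $T := \phi^{-1}(x, \eta)$ and decompose
$$M_B f \;=\; \dashint_B |f(y)|\,\chi_{\{|f| \leq T/\beta\}}(y)\,dy \;+\; \dashint_B |f(y)|\,\chi_{\{|f| > T/\beta\}}(y)\,dy \;=:\; \mathrm{I} + \mathrm{II}.$$
Trivially $\mathrm{I} \leq T/\beta$. On the support of the second integrand, the previous display yields $|f(y)| > T/\beta \geq \phi^{-1}(y, \eta)$, so the superlinear growth \eqref{oct25} applied with $c = |f(y)|/\phi^{-1}(y,\eta) \geq 1$ gives
$$|f(y)| \;\leq\; \frac{\phi^{-1}(y, \eta)}{\eta}\,\phi\bigl(y, |f(y)|\bigr).$$
Plugging this bound in, using the comparison of inverses once more to replace $\phi^{-1}(y, \eta)$ by $T/\beta$, and recalling the definition of $\eta$, produces
$$\mathrm{II} \;\leq\; \frac{T}{\beta \eta}\,\dashint_B \phi\bigl(y, |f(y)|\bigr)\,dy \;=\; \frac{T}{\beta}.$$
Combining, $M_B f \leq 2T/\beta$, or equivalently $(\beta/2)\,M_B f \leq \phi^{-1}(x, \eta)$, and applying $\phi(x, \cdot)$ to both sides yields the desired estimate with constant $\gamma = \beta/2$.

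Degenerate cases are handled separately but are straightforward: if $\eta = 0$ then $f \equiv 0$ on $B$ and both sides vanish; if $\phi^{-1}(x, \eta) = +\infty$ (i.e., $\phi(x, t) \leq \eta$ for all $t \geq 0$) then the conclusion holds for any $\gamma > 0$. The only delicate point is ensuring that the single $x$-dependent threshold $T/\beta$ dominates $\phi^{-1}(y, \eta)$ uniformly in $y \in B$, which is precisely the content of \eqref{july4}; without normalization one would be forced into a $y$-dependent threshold and lose control of $\phi(x, \gamma M_B f)$. No other ingredient is required beyond convexity through \eqref{oct25} and the hypothesis $\eta \leq |B|^{-1}$, so the main work is just organizing the cases cleanly.
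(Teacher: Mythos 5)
Your argument is correct and is essentially the standard proof of this key lemma; the paper itself does not reproduce a proof but cites \cite[Theorem~4.3.3]{HarHas19book}, and your decomposition of $M_Bf$ at the threshold $T/\beta$ combined with the normalization inequality \eqref{july4} and the superlinearity \eqref{oct25} is the expected route. One small inaccuracy: when $\eta=0$ you assert $f\equiv 0$ on $B$, which need not hold, since the normalized $\phi$ may satisfy $\phi_\infty(t)=0$ on an interval $[0,a]$ with $0<a<1$; all one can conclude is $|f|\leq a$ a.e.\ on $B$. Fortunately this still yields $M_Bf\leq a<1$ and hence $\phi(x,\gamma M_Bf)=\phi_\infty(\gamma M_Bf)=0$ for your choice $\gamma=\beta/2\leq 1/2$, so the conclusion stands; you should simply replace the claim $f\equiv 0$ with this slightly more careful observation.
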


Define the function
$\omega : \RRn \times [0, \infty) \to (0, \infty]$ as
\begin{equation}\label{93}
\omega (x,t) = \lambda \Big(x, \frac 1{\phi (x,t)^{1/n}}\Big) \quad \text{for $x \in \RRn$ and $t \geq 0$,}
\end{equation}
where $\lambda$ is  given by \eqref{31}.


\begin{lemma}\label{lemma7} There exists a constant $c=c(n, \alpha, \beta)$ such that
\begin{equation}\label{100}
|I_\alpha f(x)| \leq c \|f\|_{L^{\phi (\cdot)}(\RRn)}\, \omega \bigg(x, \frac{Mf(x)}{ \|f\|_{L^{\phi (\cdot)}(\RRn)}}\bigg) \quad \text{for a.e. $x\in \RRn$,}
\end{equation}
for $f \in L^{\phi (\cdot)}(\RRn)$.
\end{lemma}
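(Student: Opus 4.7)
My plan is to implement a Hedberg-type splitting of the Riesz potential and combine it with the two ingredients already prepared, namely the Musielak-Orlicz Hölder estimate of Lemma \ref{lemma2} (for the tail) and the rescaling inequality of Lemma \ref{lemma4} (to convert powers of the cut-off radius into $\lambda$ through the factor $\phi^{-1}(x,\cdot)$). I would first normalize so that $\|f\|_{L^\phix(\RRn)}=1$, reducing \eqref{100} to $|I_\alpha f(x)|\leq c\,\omega(x,Mf(x))$ for a.e.~$x$; the general case then follows by homogeneity of both sides in $f$. It is enough to argue at a Lebesgue point $x$ with $0<Mf(x)<\infty$, which is the generic situation.

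For every $\delta>0$, I would split $I_\alpha f(x)$ into integrals over $B(x,\delta)$ and its complement. The inner integral is handled by a standard dyadic decomposition into annuli $B(x,2^{-j}\delta)\setminus B(x,2^{-j-1}\delta)$, using the trivial bound $M_{B(x,r)}f\leq Mf(x)$, to obtain
$$\int_{B(x,\delta)}\frac{|f(y)|}{|x-y|^{n-\alpha}}\,dy\leq c_n(\alpha)\,\delta^\alpha Mf(x).$$
The tail integral, by the Hölder inequality \eqref{holder} combined with Lemma \ref{lemma2}, is bounded by $2\lambda(x,\delta)$. Hence
$$|I_\alpha f(x)|\leq c_1\,\delta^\alpha Mf(x)+2\,\lambda(x,\delta)\qquad\text{for every }\delta>0.$$

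The crux is the choice of $\delta$: I would pick $\delta$ so that $\delta^{-n}=2\phi(x,Mf(x))$, which arranges $\omega(x,Mf(x))=\lambda(x,2^{1/n}\delta)$. The reason for the factor $2$ is the elementary inequality $\phi^{-1}(x,2\phi(x,s))\geq s$, valid for every $s\geq 0$ and following directly from the definition of the left-continuous inverse because every $\tau<s$ satisfies $\phi(x,\tau)\leq \phi(x,s)<2\phi(x,s)$. This yields $\phi^{-1}(x,\delta^{-n})\geq Mf(x)$, so Lemma \ref{lemma4} gives
$$\delta^\alpha Mf(x)\leq \frac{c\,\lambda(x,\delta)\,Mf(x)}{\phi^{-1}(x,\delta^{-n})}\leq c\,\lambda(x,\delta),$$
and summing the two pieces yields $|I_\alpha f(x)|\leq c'\,\lambda(x,\delta)$. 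It remains to compare $\lambda(x,\delta)$ with $\omega(x,Mf(x))=\lambda(x,2^{1/n}\delta)$; from the definition \eqref{31},
$$\frac{\lambda(x,\delta)}{\lambda(x,2^{1/n}\delta)}=2^{(n-\alpha)/n}\,\frac{\psi^{-1}(x,\delta^{-n}/2)}{\psi^{-1}(x,\delta^{-n})}\leq 2^{(n-\alpha)/n},$$
by monotonicity of $\psi^{-1}(x,\cdot)$. This produces \eqref{100} with $c=c(n,\alpha,\beta)$.

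The genuine obstacle, and the reason for the factor $2$ shift, is that $\phi(x,\cdot)$ may have flat pieces, so the identity $\phi^{-1}(x,\phi(x,s))=s$ fails in general; the naive choice $\delta^{-n}=\phi(x,Mf(x))$ would then produce the wrong sign in the denominator of Lemma \ref{lemma4}. The degenerate cases (for instance $\phi(x,Mf(x))\in\{0,\infty\}$, or $Mf(x)=0$) will be treated by a limiting argument $\delta\to 0^+$ or $\delta\to\infty$ exploiting only the tail bound; in the limits the right-hand side of \eqref{100} either is $\infty$, making the inequality trivial, or equals $2\lambda(x,0)$, which is still controlled by Lemma \ref{lemma2}.
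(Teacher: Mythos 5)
Your argument is correct and follows the same Hedberg-type route as the paper: split $I_\alpha f$ at radius $\delta$, bound the inner piece by $\delta^\alpha Mf(x)$, bound the outer piece via H\"older and Lemma~\ref{lemma2} by $\lambda(x,\delta)$, and then choose $\delta$ in terms of $\phi(x,Mf(x))$ so that Lemma~\ref{lemma4} absorbs the inner term into $\lambda(x,\delta)\approx\omega(x,Mf(x))$. The only deviation is the factor~$2$ in your choice $\delta^{-n}=2\phi(x,Mf(x))$, together with the subsequent comparison of $\lambda(x,\delta)$ with $\lambda(x,2^{1/n}\delta)$; the paper takes $\delta^{-n}=\phi\big(x,Mf(x)/\|f\|\big)$ directly. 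Your motivation for the shift — that $\phi(x,\cdot)$ might have a flat piece, so $\phi^{-1}(x,\phi(x,t))<t$ — is more cautious than necessary under the standing hypotheses of Section~\ref{sec:riesz}: for a convex Young function a flat piece can only occur where the function vanishes, and since $\phi$ is here assumed normalized and to satisfy \eqref{conv0a}, one has $\phi(x,t)=\phi_\infty(t)>0$ for all small $t>0$ and hence $\phi(x,t)>0$ for all $t>0$. On $\{t:\phi(x,t)\in(0,\infty)\}$ the function $\phi(x,\cdot)$ is therefore strictly increasing and $\phi^{-1}(x,\phi(x,t))=t$, so the paper's substitution $s=1/\phi(x,t)$ in Lemma~\ref{lemma4} yields \eqref{103} without the shift (the two remaining cases $\phi(x,t)\in\{0,\infty\}$ are handled by the same trivial limiting considerations you already mention). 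In short: same proof, with an extra safety margin that the hypotheses actually render superfluous.
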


\begin{proof} Fix $\delta>0$ and any $x \in \RRn$ such that $Mf(x)<\infty$. 
The following estimate is well-known:
\begin{equation}\label{94}
\int_{B(x,\delta)}\frac{|f(y)|}{|x-y|^{n-\alpha}}\, dy \leq \frac n\alpha \delta^\alpha Mf(x),
\end{equation}
see e.g. \cite[Inequality (3.1.1)]{adamshedberg}. Hence, 
by  inequalities \eqref{holder} and \eqref{49},   
\begin{align}\label{101}
\int_{\RRn}\frac{|f(y)|}{|x-y|^{n-\alpha}}\,dy & = \int_{B(x,\delta)}\frac{|f(y)|}{|x-y|^{n-\alpha}}\, dy + \int_{\RRn \setminus B(x,\delta)}\frac{|f(y)|}{|x-y|^{n-\alpha}}\, dy 
\\ \nonumber & \leq \frac n\alpha \delta^\alpha Mf(x) + 2 \|f\|_{L^{\phi (\cdot)}(\RRn)} \bigg\|\frac{\chi_{\RRn \setminus B(x,\delta)}(\cdot)}{|x-\cdot|^{n-\alpha}}\bigg\|_{L^{\tp (\cdot)}(\RRn)}
\\ \nonumber & \leq \frac n\alpha \delta^\alpha Mf(x) + 2 \|f\|_{L^{\phi (\cdot)}(\RRn)}\lambda (x,\delta).
\end{align}
The choice 
$$\delta =  \phi\bigg(x, \frac{Mf(x)}{ \|f\|_{L^{\phi (\cdot)}(\RRn)}}\bigg)^{-\frac 1n}$$
in equation \eqref{101} yields:
\begin{equation}\label{102}
|I_\alpha f(x)| \leq \frac n\alpha \frac{Mf(x)}{ \phi\Big(x, \frac{Mf(x)}{ \|f\|_{L^{\phi (\cdot)}(\RRn)} }\Big)^{\frac \alpha n}} + 2 \|f\|_{L^{\phi (\cdot)}(\RRn)} \omega \bigg(x, \frac{Mf(x)}{ \|f\|_{L^{\phi (\cdot)}(\RRn)}}\bigg).
\end{equation}
Now, observe that
\begin{align}\label{103}
 \frac t{\phi(x,t)^{\frac \alpha n}} \leq c \,\omega (x, t) \quad \text{for $t>0$,}
\end{align}
where $c$ is the constant from inequality \eqref{85}. Indeed, owing to inequality  \eqref{85},


\begin{equation*}
\phi^{-1}(x, s^{-1}) s^{\frac \alpha n}\leq c \,\lambda \big(x, s^{\frac 1n}\big) \quad \text{for $s>0$,}
%
\end{equation*}
whence inequality \eqref{103} follows, on setting $s=\frac 1{\phi(x,t)}$.
\\ Exploiting inequality \eqref{103}, with
$$t= \frac{Mf(x)}{\|f\|_{L^{\phi (\cdot)}(\RRn)}},$$
to estimate the first addend on the right-hand side of inequality \eqref{102} tells us that
\begin{equation}\label{104}
|I_\alpha f(x)| \leq c  \|f\|_{L^{\phi (\cdot)}(\RRn)} \omega \bigg(x, \frac{Mf(x)}{ \|f\|_{L^{\phi (\cdot)}(\RRn)}}\bigg)+ 2 \|f\|_{L^{\phi (\cdot)}(\RRn)} \omega \bigg(x, \frac{Mf(x)}{ \|f\|_{L^{\phi (\cdot)}(\RRn)}}\bigg)
\end{equation}
for some constant $c=c(n, \alpha, \beta)$, namely \eqref{100}.
\end{proof}

\begin{lemma}\label{cistro}
There exist positive constants $c_1=c_1(n, \alpha, \sigma)$ and $c_2=c_2(n, \alpha, \sigma)$  such that
\begin{equation}\label{114}
c_1 k\, H_{\frac n\alpha}(x,t) \leq \omega ( x,  t) \leq c_2 k\, H_{\frac n\alpha}(x,t) \quad \quad \text{for $x \in \RRn$ and $t \geq 0$.}
\end{equation}
Here, $\sigma$ and $k$ denote the constants appearing in equation \eqref{30}.
\end{lemma}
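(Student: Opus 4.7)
The strategy is to unravel the definitions of $\omega$ and $\psi$ by extracting the factor $k$ and then matching the remainder with $H_{\frac n\alpha}$ through Young-duality substitutions. First, substituting $u = k\tau$ in the integral defining $\psi$ yields
$$\omega(x,t) \;=\; k\bigl(\sigma I(r)\bigr)^{(n-\alpha)/n}, \qquad I(r) \;:=\; \int_0^r \frac{\widetilde\phi(x,u)}{u^{1+n/(n-\alpha)}}\,du,$$
where $r$ is characterised implicitly by the relation $\phi(x,t) = \sigma r^{n/(n-\alpha)} I(r)$. Setting $G(x,t) := H_{\frac n\alpha}(x,t)^{n/(n-\alpha)} = \int_0^t (\tau/\phi(x,\tau))^{\alpha/(n-\alpha)}\,d\tau$, the lemma reduces to the two-sided estimate $\sigma I(r) \approx G(x,t)$ with constants depending only on $n$, $\alpha$, and $\sigma$.

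For the upper bound $\sigma I(r) \leq C\,G(x,t)$, I would substitute $v = \widetilde\phi(x,u)$ inside $I(r)$ and use the convexity-based estimate $(\widetilde\phi^{-1})'(x,v) \leq \widetilde\phi^{-1}(x,v)/v$ together with the Young-duality bound $1/\widetilde\phi^{-1}(x,v) \leq 2\phi^{-1}(x,v)/v$ from \eqref{6}, obtaining after a further substitution $v = \phi(x,\tau)$ that
$$I(r) \;\leq\; 2^{n/(n-\alpha)}\int_0^{\phi^{-1}(x,\widetilde\phi(x,r))} \Bigl(\frac{\tau}{\phi(x,\tau)}\Bigr)^{n/(n-\alpha)} \phi'(x,\tau)\,d\tau.$$
A single integration by parts, exploiting $\frac{d}{d\tau}\bigl(\phi(x,\tau)^{-\alpha/(n-\alpha)}\bigr) = -\frac{\alpha}{n-\alpha}\phi(x,\tau)^{-n/(n-\alpha)}\phi'(x,\tau)$, collapses the right-hand side (up to a non-negative boundary term) to $(n/\alpha)\,G\bigl(x,\phi^{-1}(x,\widetilde\phi(x,r))\bigr)$. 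The endpoint inequality $\phi^{-1}(x,\widetilde\phi(x,r)) \leq t$ follows by combining the implicit identity for $r$ with the elementary lower bound on $I(r)$ coming from monotonicity, yielding $\widetilde\phi(x,r) \leq \phi(x,t)/(c\sigma)$; the concavity of $\phi^{-1}(x,\cdot)$, the Young-duality bound \eqref{6}, and the hypothesis $\sigma \geq 1$ from \eqref{sigmanew} then give the required inequality. Thus $\sigma I(r) \leq 2^{n/(n-\alpha)}(n/\alpha)\sigma\,G(x,t)$.

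The reverse inequality $\sigma I(r) \geq c\,G(x,t)$ is proved symmetrically: one starts from the identity $\sigma I(r) = \phi(x,t)/r^{n/(n-\alpha)}$, uses the monotonicity of $\phi(x,\tau)/\tau$ to get the pointwise bound $G(x,t) \geq t^{n/(n-\alpha)}/\phi(x,t)^{\alpha/(n-\alpha)}$, and combines these with the estimate $r \leq 2\phi(x,t)/t$ obtained from \eqref{6} exactly as in the upper-bound argument. The main obstacle is keeping every Young-duality constant depending only on $n$, $\alpha$, and $\sigma$ throughout the whole chain of substitutions; the integration-by-parts step that supplies the clean numerical factor $n/\alpha$ is the key technical device, and the size hypothesis \eqref{sigmanew} on $\sigma$ is precisely what allows the monotonicity-based estimates to absorb the remaining mismatches without imposing any $\Delta_2$-type assumption on $\phi$ or $\widetilde\phi$.
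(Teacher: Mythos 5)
Your proposal takes a genuinely different route from the paper. The paper factors the $k$-dependence out by a scaling argument — it introduces $\phi_k(x,t) = \phi(x,t/k)$, checks the identities $\widetilde{\phi_k}(x,t)=\widetilde\phi(x,kt)$ and $H_{\frac n\alpha,k}(x,t)=kH_{\frac n\alpha}(x,t/k)$, and then invokes \cite[Lemma~2]{CianchiStroff}, which establishes the comparison $\omega\approx H_{\frac n\alpha}$ for a fixed Young function, with no reference to the $x$-dependence or to $k$. What you are attempting is a direct proof of the core comparison $\sigma I(r)\approx G(x,t)$ — i.e., you are essentially re-proving the external lemma by hand. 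That is a legitimate alternative, and your chain of Young-duality substitutions and the integration by parts producing the factor $n/\alpha$ does capture some of the right ideas. However, as written there are two concrete gaps.

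First, in the upper bound, you claim that elementary monotonicity of $I(r)$ yields $\widetilde\phi(x,r)\le\phi(x,t)/(c\sigma)$. What monotonicity actually gives (integrating over $[r/2,r]$ as the paper does in \eqref{38}) is $\widetilde\phi(x,r/2)\le\phi(x,t)/\sigma$. Since $\widetilde\phi$ is an arbitrary generalized Young function — no $\Delta_2$ condition is assumed, and none can be imposed without restricting the scope of Theorem~\ref{lemma12} — the quantity $\widetilde\phi(x,r)$ cannot be controlled by $\widetilde\phi(x,r/2)$. Consequently the endpoint $T=\phi^{-1}\bigl(x,\widetilde\phi(x,r)\bigr)$ is not known to lie below $t$, and the chain does not close. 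One would have to split the integral $I(r)=I(r/2)+\int_{r/2}^r$ and treat the tail separately, which needs a further idea beyond what you sketch.

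Second, and more seriously, the lower bound is logically misdirected. From the identity $\sigma I(r)=\phi(x,t)/r^{n/(n-\alpha)}$ and the estimate $r\le 2\phi(x,t)/t$ you obtain
\begin{equation*}
\sigma I(r)\ \ge\ 2^{-\frac n{n-\alpha}}\,\frac{t^{\frac n{n-\alpha}}}{\phi(x,t)^{\frac\alpha{n-\alpha}}},
\end{equation*}
and separately you record $G(x,t)\ge t^{n/(n-\alpha)}/\phi(x,t)^{\alpha/(n-\alpha)}$. Two quantities both bounded below by the same expression are not thereby comparable. To close this step you would need the opposite bound $G(x,t)\le C\,t^{n/(n-\alpha)}/\phi(x,t)^{\alpha/(n-\alpha)}$, and that bound is simply false in general: for $\phi(x,\tau)=\tau^p$ with $p<n/\alpha$ one computes $G(x,t)=\frac{n-\alpha}{n-p\alpha}\,t^{(n-p\alpha)/(n-\alpha)}$, so the ratio $G(x,t)\,\phi(x,t)^{\alpha/(n-\alpha)}/t^{n/(n-\alpha)}=\frac{n-\alpha}{n-p\alpha}$ is unbounded as $p\uparrow n/\alpha$. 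The lemma remains true in that regime precisely because $\sigma I(r)$ blows up at the same rate, but your pointwise reduction discards exactly the divergent factor that makes the two sides match. A correct lower bound must run the full substitution-plus-integration-by-parts chain in reverse, starting from $G(x,t)$ rather than from its pointwise minorant, so that the matching constants are preserved on both sides.
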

\begin{proof}
Define the generalized Young function $\phi_k : \RRn \times [0, \infty) \to [0, \infty]$ as
\begin{equation}\label{111}
\phi_k(x,t) = \phi\Big(x, \frac tk\Big) \quad \text{for $x \in \RRn$ and $t \geq 0$.}
\end{equation}
Moreover, let $H_{\frac n\alpha ,k} : \RRn \times [0, \infty) \to [0, \infty)$ be the function defined as in \eqref{Ha}, with $\phi$ replaced by $\phi_k$. Namely,
\begin{equation}\label{112}
H_{\frac n\alpha ,k}(x,t) = \bigg(\int_0^t \bigg(\frac \tau{\phi_k (x, \tau)}\bigg)^{\frac \alpha {n-\alpha}}\, d \tau\bigg)^{\frac{n-\alpha}n} \quad \quad \text{for $x \in \RRn$ and $t \geq 0$.}
\end{equation}
Observe that
\begin{equation}\label{112'}
\widetilde {\phi_k} (x,t) = \tp (x, tk)\quad  \quad \text{for $x \in \RRn$ and $t \geq 0$,}
\end{equation}
where 
 $\widetilde {\phi_k}$ denotes the Young conjugate of the function $\phi_k$. Also,
\begin{equation}\label{113}
H_{\frac n\alpha, k}(x,t) = k H_{\frac n\alpha} \Big(x, \frac tk\Big) \quad \text{for $x \in \RRn$ and $t \geq 0$.}
\end{equation}
By \cite[Lemma 2]{CianchiStroff}, applied to the function $\phi_k(x, \cdot)$ for each fixed $x\in \RRn$, and equation \eqref{5}, there exist positive constants $c_1=c_1(n, \alpha, \sigma)$ and $c_2=c_2(n, \alpha, \sigma)$  such that
$$
c_1 H_{\frac n\alpha, k}(x,t) \leq \omega \Big( x, \frac tk\Big) \leq c_2 H_{\frac n\alpha, k}(x,t) \quad  \quad \text{for $x \in \RRn$ and $t \geq 0$.}
$$
Hence, equation \eqref{114} follows, via inequalities \eqref{113}.
\end{proof}

%

\begin{lemma}\label{lemma9}
Let  $\sigma$ and $k$ be the constants from equation \eqref{30}, and let $b>0$. There exists a constant $c=c(n, \alpha, \beta, \sigma)$ such that, if
\begin{equation}\label{120'}
\|f\|_{L^{\phi (\cdot)}(\RRn)}\leq b,
\end{equation}
then 
\begin{align}\label{127}
|I_\alpha f(x)|    \leq   ck\max\{b,1\} H_{\frac n\alpha}\big(x,  Mf(x)\big)   \quad \text{for a.e. $x\in \RRn$.}
\end{align}
\end{lemma}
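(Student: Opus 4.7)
The plan is to chain together the two preparatory estimates, namely Lemma \ref{lemma7} and Lemma \ref{cistro}, and then absorb the awkward normalization by $\|f\|_{L^{\phi(\cdot)}(\RRn)}$ inside $H_{\frac n\alpha}$ by exploiting the concavity of $H_{\frac n\alpha}(x,\cdot)$. First, Lemma \ref{lemma7} yields a constant $c_1=c_1(n,\alpha,\beta)$ such that, for a.e. $x\in\RRn$,
$$|I_\alpha f(x)| \leq c_1\,\|f\|_{L^{\phi(\cdot)}(\RRn)}\,\omega\!\left(x,\frac{Mf(x)}{\|f\|_{L^{\phi(\cdot)}(\RRn)}}\right),$$
and then the upper bound for $\omega$ in Lemma \ref{cistro} produces a constant $c_2=c_2(n,\alpha,\sigma)$ with
$$|I_\alpha f(x)| \leq c_1 c_2\, k\, \|f\|_{L^{\phi(\cdot)}(\RRn)}\, H_{\frac n\alpha}\!\left(x,\frac{Mf(x)}{\|f\|_{L^{\phi(\cdot)}(\RRn)}}\right).$$

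The main task is thus to remove the factor $\|f\|_{L^{\phi(\cdot)}(\RRn)}$ from the denominator in the argument of $H_{\frac n\alpha}$. For this I would first verify that $H_{\frac n\alpha}(x,\cdot)$ is concave with value $0$ at the origin. Indeed, since $\phi(x,\cdot)$ is a Young function, the ratio $\phi(x,\tau)/\tau$ is nondecreasing in $\tau$, so the integrand $\bigl(\tau/\phi(x,\tau)\bigr)^{\alpha/(n-\alpha)}$ in \eqref{Ha} is nonincreasing, which makes the inner integral a concave function of $t$; composing with the concave, nondecreasing map $s\mapsto s^{(n-\alpha)/n}$ preserves concavity, and the vanishing at $0$ is evident from \eqref{Ha}.

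Writing $\|f\|=\|f\|_{L^{\phi(\cdot)}(\RRn)}\leq b$ for brevity, I would distinguish two cases. If $\|f\|\geq 1$, then $Mf(x)/\|f\|\leq Mf(x)$, and monotonicity of $H_{\frac n\alpha}(x,\cdot)$ gives
$$\|f\|\,H_{\frac n\alpha}\!\left(x,\tfrac{Mf(x)}{\|f\|}\right)\leq \|f\|\,H_{\frac n\alpha}(x,Mf(x))\leq b\,H_{\frac n\alpha}(x,Mf(x)).$$
If instead $\|f\|<1$, I would apply the concavity inequality \eqref{5} to the concave function $h=H_{\frac n\alpha}(x,\cdot)$ with scaling $c=1/\|f\|\geq 1$, obtaining $H_{\frac n\alpha}(x,Mf(x)/\|f\|)\leq (1/\|f\|)\,H_{\frac n\alpha}(x,Mf(x))$, so that the prefactor $\|f\|$ cancels and the bound $H_{\frac n\alpha}(x,Mf(x))\leq\max\{b,1\}\,H_{\frac n\alpha}(x,Mf(x))$ is attained. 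In either case the conclusion \eqref{127} follows with $c=c_1 c_2$, where $c=c(n,\alpha,\beta,\sigma)$.

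I do not expect a serious obstacle in this lemma: the heavy lifting has already been done in Lemmas \ref{lemma7} and \ref{cistro}, and the present argument is just a cosmetic step that trades the normalization by $\|f\|_{L^{\phi(\cdot)}(\RRn)}$ for the harmless multiplicative factor $\max\{b,1\}$, via the elementary concavity property \eqref{5} of $H_{\frac n\alpha}(x,\cdot)$.
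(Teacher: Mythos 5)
Your proposal is correct and follows essentially the same route as the paper: combine Lemma \ref{lemma7} with the upper bound of Lemma \ref{cistro}, then use concavity of $H_{\frac n\alpha}(x,\cdot)$ together with $H_{\frac n\alpha}(x,0)=0$ (equivalently, the monotonicity of $t\mapsto H_{\frac n\alpha}(x,t)/t$ and property \eqref{5}) to absorb the normalization by $\|f\|_{L^{\phi(\cdot)}(\RRn)}$ into the factor $\max\{b,1\}$. The paper passes through the intermediate value $b$ in one chain rather than splitting into the cases $\|f\|\ge 1$ and $\|f\|<1$, but this is only a cosmetic difference.
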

\begin{proof}  
Inequalities \eqref{100} and \eqref{114} ensure that there exists a constant $c=c(n, \alpha, \beta,  \sigma)$ such that
\begin{align}\label{126}
|I_\alpha f(x)| \leq   c k\, \|f\|_{L^{\phi (\cdot)}(\RRn)}  H_{\frac n\alpha}\bigg(x, \frac{Mf(x)}{ \|f\|_{L^{\phi (\cdot)}(\RRn)}}\bigg)\quad \text{for a.e. $x\in \RRn$. }
\end{align}
Since, for each fixed $x\in \RRn$, the function 
\begin{equation}\label{Hconc}
\text{$t \mapsto H_{\frac n\alpha}(x, t)$ is concave and $H_{\frac n\alpha}(x,0)=0$,}
\end{equation}
 the function 
\begin{equation}\label{monotone}
t\mapsto \frac 1t\,H_{\frac n\alpha} (x,t) \qquad \text{is non-increasing.}
\end{equation}
By  assumption 
\eqref{120'} and  properties \eqref{monotone} and \eqref{5}, inequality \eqref{126} entails that
\begin{align*}
|I_\alpha f(x)| \leq ckb H_{\frac n\alpha}\Big(x,  \frac{ Mf(x)}b\Big)  \leq  ckb\max\Big\{1, \frac 1b\Big\} H_{\frac n\alpha}\big(x,  Mf(x)\big)   \quad \text{for a.e. $x\in \RRn$.}
\end{align*}
Hence, inequality \eqref{127} follows.
\end{proof}

%

\begin{lemma}\label{lemma11}
 Let  $z\in \RRn$ and $t\geq 0$. Assume that  $f \in L^{\phi(\cdot)}(\RRn)$.  Let $B_z$ be a ball such that $z\in B_z$ and
\begin{equation}\label{142}
H _{\frac n\alpha}\big(z, M_{B_z}f\big)>t.
\end{equation}
Assume that
\begin{equation}\label{oct61}
0 <\eta \leq  \min\{\tfrac \beta 4, \tfrac \gamma \beta\},
\end{equation}
where $\gamma$ is the constant from Lemma \ref{pro:maxfunction}. If 
 $x\in B_z$ and 
\begin{equation}\label{120''}
\| f\|_{L^{\phi(\cdot)}(\RRn)}\leq \eta,
\end{equation}
then
\begin{equation}\label{145}
\phi\big(x, H^{-1}_{\frac n\alpha}(x, \beta t)\big) \leq M_{B_z}\big(\phi (\cdot, \tfrac \beta \gamma f(\cdot)).
\end{equation}
\end{lemma}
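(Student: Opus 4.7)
Set $r:=M_{B_z}f$. The hypothesis \eqref{142} reads $H_{\frac n\alpha}(z,r)>t$, which by the (continuous, strictly increasing) character of $H_{\frac n\alpha}(z,\cdot)$ is equivalent to $H^{-1}_{\frac n\alpha}(z,t)<r$. My target \eqref{145} will be obtained from the chain
\begin{equation*}
\phi\bigl(x,H^{-1}_{\frac n\alpha}(x,\beta t)\bigr)\ \le\ \phi(x,\beta r)\ \le\ M_{B_z}\bigl(\phi(\cdot,\tfrac\beta\gamma f(\cdot))\bigr),
\end{equation*}
so the work decomposes into (a) passing from the maximal average on the right to $\phi(x,\beta r)$ and (b) showing $H^{-1}_{\frac n\alpha}(x,\beta t)\le\beta r$, i.e.\ $\beta t\le H_{\frac n\alpha}(x,\beta r)$.

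For (a), apply Lemma~\ref{pro:maxfunction} to $g:=\frac\beta\gamma f$. Since $\|g\|_{L^\phix}=\frac\beta\gamma\|f\|_{L^\phix}\le\frac\beta\gamma\eta\le 1$ by the hypothesis $\eta\le\gamma/\beta$, and since $\gamma M_{B_z}g=\beta r$, the lemma delivers $\phi(x,\beta r)\le M_{B_z}(\phi(\cdot,\tfrac\beta\gamma f(\cdot)))$ for $x\in B_z$. This is the right-hand estimate above.

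For (b), the heart of the matter is a one-step comparison of $H_{\frac n\alpha}$ evaluated at $x$ and at $z$. Using \eqref{feb2} in the form $\phi(x,\beta\tau)\le\phi(z,\tau)$ valid for $\tau\le\phi^{-1}(z,|B_z|^{-1})$, together with the change of variable $s=\beta\tau$, I obtain
\begin{equation*}
H_{\frac n\alpha}(x,\beta r)^{\frac n{n-\alpha}}=\beta^{\frac n{n-\alpha}}\!\int_0^{r}\!\!\biggl(\frac{\tau}{\phi(x,\beta\tau)}\biggr)^{\!\frac{\alpha}{n-\alpha}}d\tau\ \ge\ \beta^{\frac n{n-\alpha}}\!\int_0^{r}\!\!\biggl(\frac{\tau}{\phi(z,\tau)}\biggr)^{\!\frac{\alpha}{n-\alpha}}d\tau\ =\ \bigl(\beta\,H_{\frac n\alpha}(z,r)\bigr)^{\frac n{n-\alpha}},
\end{equation*}
provided the whole integration range $(0,r)$ lies below $\phi^{-1}(z,|B_z|^{-1})$. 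Combined with the hypothesis $H_{\frac n\alpha}(z,r)>t$, this yields $H_{\frac n\alpha}(x,\beta r)\ge\beta H_{\frac n\alpha}(z,r)>\beta t$, which is (b).

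The remaining task, and the main technical obstacle, is to verify $r\le\phi^{-1}(z,|B_z|^{-1})$ so that the preceding comparison applies throughout $(0,r)$. This is where the second half of \eqref{oct61}, namely $\eta\le\beta/4$, is used: apply Lemma~\ref{pro:maxfunction} once more to $g':=\frac{4}{\beta}f$, whose norm is $\frac{4}{\beta}\|f\|_{L^\phix}\le\frac{4\eta}{\beta}\le 1$. Evaluating at $z\in B_z$ and using $\int_{B_z}\phi(\cdot,g')\le 1$ gives $\phi(z,\tfrac{4\gamma}\beta r)\le|B_z|^{-1}$, hence $r\le\frac{\beta}{4\gamma}\phi^{-1}(z,|B_z|^{-1})\le\phi^{-1}(z,|B_z|^{-1})$ as soon as the constant $\gamma$ from Lemma~\ref{pro:maxfunction} satisfies $4\gamma\ge\beta$; this is precisely the balance for which the two smallness constraints on $\eta$ in \eqref{oct61} were chosen. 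Assembling the three steps delivers \eqref{145}.
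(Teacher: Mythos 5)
Your outline reproduces the paper's plan faithfully in parts (a) and (b): step (a) is exactly the application of Lemma~\ref{pro:maxfunction} carried out in the paper's display~\eqref{155}, and your direct computation in step (b) amounts to a reparametrisation of the paper's chain~\eqref{146}--\eqref{147}, correctly yielding $H_{\frac n\alpha}(x,\beta r)\ge\beta H_{\frac n\alpha}(z,r)>\beta t$, hence $H^{-1}_{\frac n\alpha}(x,\beta t)\le\beta r$, provided the whole range $[0,r]$ sits inside $[0,\phi^{-1}(z,|B_z|^{-1})]$.

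The gap is in step (c), where you verify this last constraint. Instead of the paper's direct estimate of $M_{B_z}f$, you invoke Lemma~\ref{pro:maxfunction} a second time, with $g'=\frac{4}{\beta}f$, and reach $\phi\big(z,\tfrac{4\gamma}{\beta}r\big)\le |B_z|^{-1}$, from which you want $r\le\tfrac{\beta}{4\gamma}\phi^{-1}(z,|B_z|^{-1})\le\phi^{-1}(z,|B_z|^{-1})$. The second inequality requires $4\gamma\ge\beta$, which you assert is "precisely the balance for which the two smallness constraints on $\eta$" were chosen. That is not the case: the hypotheses $\eta\le\beta/4$ and $\eta\le\gamma/\beta$ constrain $\eta$, not the mutual size of $\gamma$ and $\beta$, and Lemma~\ref{pro:maxfunction} only guarantees some positive constant $\gamma=\gamma(n,\beta)$, with no lower bound of the form $\gamma\ge\beta/4$. (There is also a minor, secondary issue: passing from $\phi(z,v)\le s$ to $v\le\phi^{-1}(z,s)$ is not valid for the left-continuous inverse when $\phi(z,\cdot)$ is flat, though this would be easy to patch.)

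The paper avoids both problems by bounding $M_{B_z}f$ directly via the H\"older inequality~\eqref{holder}, the dual norm estimate~\eqref{80'}, and inequality~\eqref{6}:
\begin{align*}
M_{B_z}f \;\le\; \frac{2}{|B_z|}\,\|f\|_{L^{\phi(\cdot)}(\RRn)}\|\chi_{B_z}\|_{L^{\widetilde\phi(\cdot)}(\RRn)}
\;\le\; \frac{4\eta}{\beta\,|B_z|\,\widetilde\phi^{-1}(z,|B_z|^{-1})}
\;\le\; \frac{4\eta}{\beta}\,\phi^{-1}\big(z,|B_z|^{-1}\big),
\end{align*}
so that $\eta\le\beta/4$ alone gives $r=M_{B_z}f\le\phi^{-1}(z,|B_z|^{-1})$, with no reference to $\gamma$. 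Replacing your step (c) by this Hölder-based bound repairs the argument; the constant $\gamma$ should only appear in step (a), which is indeed why \eqref{oct61} has the two separate constraints $\eta\le\beta/4$ and $\eta\le\gamma/\beta$.
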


\begin{proof}
Assume that the function $f$ fulfills condition \eqref{120''}, with $\eta$ to be chosen later. Fix $x \in B_z$.
If 
$$0 \leq s \leq \beta  \phi^{-1}(z, |B_z|^{-1}),$$
then, by inequality \eqref{feb2},
\begin{align}\label{146}
H_{\frac n\alpha}(x,s) & =   \bigg(\int_0^s \bigg(\frac t{\phi (x,t)}\bigg)^{\frac \alpha {n-\alpha}}\, dt\bigg)^{\frac{n-\alpha}n} \geq  \bigg(\int_0^s \bigg(\frac t{\phi (z,t/\beta)}\bigg)^{\frac \alpha {n-\alpha}}\, dt\bigg)^{\frac{n-\alpha}n}
\\ \nonumber & = \beta \bigg(\int_0^{\frac s\beta} \bigg(\frac t{\phi (z,t)}\bigg)^{\frac \alpha {n-\alpha}}\, dt\bigg)^{\frac{n-\alpha}n} = \beta H_{\frac n\alpha}\Big(z, \frac s\beta\Big).
\end{align}
Hence,
\begin{equation}\label{147}
H^{-1}_{\frac n\alpha}(x, \beta t) \leq \beta H^{-1}_{\frac n\alpha}(z,t) 
\end{equation}
if 
\begin{equation}\label{147'}
0 \leq t \leq H_{\frac n\alpha}\big(z,  \phi^{-1}(z, |B_z|^{-1})\big).
\end{equation}
Thanks to assumption \eqref{120''}, inequalities \eqref{holder}, \eqref{80'} and  \eqref{6} imply that
\begin{align}\label{150}
M_{B_z}f & = \frac 1{|B_z|}\int_{B_z}|f(y)|\, dy \leq  \frac 2{|B_z|}\|f\|_{L^{\phi(\cdot)}(\RRn)}\|\chi_{B_z}\|_{L^{\tp(\cdot)}(\RRn)} 
\\ \nonumber &\leq \frac{4  \eta }{\beta |B_z|\tp^{-1}(z, |B_z|^{-1})} \leq \frac{4\eta}\beta \phi^{-1}(z, |B_z|^{-1}).
\end{align}
By inequalities \eqref{142} and \eqref{150},
\begin{equation}\label{151}
 t \leq H_{\frac n\alpha}(z, M_{B_z}f) \leq H_{\frac n\alpha}\Big(z,  \frac{4\eta}\beta \phi^{-1}(z, |B_z|^{-1})\Big).
\end{equation}
Thus, condition \eqref{147'}, and hence \eqref{147}, hold provided that
\begin{equation}\label{152}
H_{\frac n\alpha}\Big(z,  \frac{4\eta}\beta \phi^{-1}(z, |B_z|^{-1})\Big) \leq H_{\frac n\alpha}\big(z, \phi^{-1}(z, |B_z|^{-1})\big).
\end{equation}
The latter inequality is fulfilled if
\begin{equation}\label{153}
\eta \leq  \frac \beta 4.
\end{equation}
Inequalities \eqref{147} and  \eqref{142}  tell us that
\begin{align}\label{154}
H^{-1}_{\frac n\alpha}(x, \beta t) \leq \beta H^{-1}_{\frac n\alpha}(z, t) \leq \beta M_{B_z}f.
\end{align}
Hence, owing to
 Proposition \ref{pro:maxfunction},  
\begin{align}\label{155}
\phi\big(x, H^{-1}_{\frac n\alpha}(x, \beta t) \big) \leq \phi\big(x, \beta M_{B_z}f\big) =  \phi\big(x, \gamma  M_{B_z}\big(\tfrac \beta \gamma f\big)\big)
  \leq M_{B_z}\big(\phi (\cdot, \tfrac \beta \gamma f(\cdot)\big),
\end{align}
if $\eta$ satisfies both inequality \eqref{153} and 
\begin{equation}\label{155'} 
\eta \leq   \frac \gamma \beta.
\end{equation}
Hence, inequality \eqref{145} follows. 
\end{proof}

\begin{proof}[Proof of Theorem \ref{lemma12}] Thanks to Proposition \ref{replacephi}, it suffices to prove inequality  \eqref{main1} with $\phi$ replaved with the normalized function $\overline \phi$ given by \eqref{july7}. For ease of notation,  we denote $\overline \phi$ simply by $\phi$. 
\\
Fix $t \geq 0$. Let $\eta$ be as in \eqref{oct61}, let $k$ be the constant in definition \eqref{30} of the function $\psi$, and let $c$ be the constant from
inequality \eqref{127}.
Assume that 
\begin{equation}\label{oct60}
\| f\|_{L^{\phi(\cdot)}(\RRn)}\leq \eta.
\end{equation}
By Lemma \ref{lemma9}, applied with $b=\eta$,
\begin{equation}\label{141}
\{|I_\alpha (x)| > c't\} \subset \{H_{\frac n\alpha}(x, Mf(x))>t\},
\end{equation}
where $c'= ck\max\{\eta,1\}$.  
\\ For each $z\in \{H_{\frac n\alpha}(x, Mf(x))>t\}$, let $B_z$ be a ball such that $z\in B_z$ and inequality \eqref{142} holds. By Besicovitch's covering theorem, there exists a countable sub-covering $\{B_i\}$ of the covering $\{B_z\}$ enjoying the bounded overlap property.
Thus, by Lemma \ref{lemma11},
 inequality \eqref{145} and this  property of  the covering, there exists a constant $c''=c''(n)$ such that
\begin{align}\label{173}
\int_{\{|I_\alpha (x)| > c' t\}} \phi (x, H^{-1}_{\frac n\alpha}(x,  \beta  t))\, dx & \leq \int_{\{H_{\frac n\alpha}(x, Mf(x))>t\}} \phi (x, H^{-1}_{\frac n\alpha}(x,   \beta t))\, dx 
\\ \nonumber & \leq \sum_{i=1}^\infty \int_{B_i} \phi (x, H^{-1}_{\frac n\alpha}(x,   \beta  t))\, dx \leq 
\sum_{i=1}^\infty
\int_{B_i} \dashint _{B_i}\phi(y, \tfrac \beta{ \gamma }|f(y)|)\, dy \,dx 
\\   \nonumber & =
\sum_{i=1}^\infty
\int_{B_i} \phi(y, \tfrac \beta{ \gamma }|f(y)|)\, dy \leq c'' \int_{\RRn}\phi(y, \tfrac \beta{ \gamma }|f(y)|)\, dy
\\   \nonumber &  \leq \int_{\RRn}\phi(y, \tfrac \beta{ \gamma } \max\{c'',1\}|f(y)|)\, dy \leq  \int_{\RRn}\phi(y, c'''|f(y)|)\, dy,
\end{align}
provided that the constant $c'''$ fulfills the inequality
$$c'''\geq \frac {\beta \max\{c'',1\}}{ \gamma }.$$
An application of inequality \eqref{173}, with $f$ replaced by $\frac f{c'''}$, yields
%
%
\begin{equation}\label{180}
\int_{\{|I_\alpha f (x)| >c' c'' t\}} \phi (x, H^{-1}_{\frac n\alpha}(x,  \beta  t))\, dx \leq   \int_{\RRn}\phi(y,  |f(y)|)\, dy,
\end{equation}
if
\begin{equation}\label{181}
\|f\|_{L^{\phi(\cdot)}(\RRn)} \leq \eta\, c'''.
\end{equation}
Now, choose  
$$c'''= \max \Big\{\frac {\beta \max\{c'',1\}}{ \gamma }, \frac 1\eta \Big\}.$$
Thereby, we deduce from inequality \eqref{180} that
\begin{equation}\label{182}
\int_{\{|I_\alpha f(x)| > c' c''' t\}} \phi (x, H^{-1}_{\frac n\alpha}(x,  \beta  t))\, dx \leq   \int_{\RRn}\phi(y,  |f(y)|)\, dy,
\end{equation}
if 
\begin{equation*}
\|f\|_{L^{\phi(\cdot)}(\RRn)} \leq 1.
\end{equation*}
Inequality \eqref{176'} is thus established.
\end{proof}

\begin{proof}[Proof of Proposition \ref{optweak}] As in the proof of Theorem \ref{lemma12}, we may assume that  $\varphi = \overline \varphi$. 
Consider the Young function $A : [0, \infty) \to [0, \infty]$ defined as
\begin{equation}\label{theta}
A(t) = \sup_{x\in \RRn}  \phi (x, t) \qquad \text{for $t\geq 0$.}
\end{equation}
Plainly,
\begin{equation}\label{theta2}
A (t) \geq    \phi (x, t) \qquad \text{for $x\in \RRn$ and $t\geq 0$,}
\end{equation}
and, since $\phi$ is normalized,
\begin{equation}\label{theta1}
A(t) =   \phi (x,t) = \phi_\infty (t) \qquad \text{if $0 \leq t \leq 1$,}
\end{equation}
for $x\in \RRn$.
As a consequence of the latter identity,  there exists $t_0>0$ such that
\begin{equation}\label{theta3}
\widetilde A (t) = \widetilde  \phi (t) \qquad \text{if  $0 \leq t \leq t_0$.}
\end{equation}
Choose trial functions $f$ in inequality \eqref{optweak1} of the form $f(x)=g(\omega_n |x|^n)$,
where $g: [0, \infty) \to [0, \infty)$ is a bounded measurable function with bounded support. 
Then
\begin{align}\label{optweak4}
I_\alpha f(x) \geq 2^{\alpha -n} \int_{\{|y|\geq |x|\}}\frac{f(y)}{|y|^{n-\alpha}}\, dy = \omega_n^{\frac{n-\alpha}n}2^{\alpha -n} \int_{\omega _n |x|^n}^\infty g(s) s^{-\frac{n-\alpha}{n}}\, ds \quad \text{for $x \in \RRn$.}
\end{align}
Define the function  $Tg: [0, \infty) \to [0, \infty]$ as
$$Tg (r) =  \int_{\omega _n r^n}^\infty g(s) s^{-\frac{n-\alpha}{n}}\, ds \quad \text{for $r\in[0, \infty)$.}$$
Thus, inequality \eqref{optweak1} tells us that
\begin{equation}\label{optweak5}
\int_{\{x: Tg(|x|)>t\}}\vartheta (x,ct)\, dx \leq \int_{\RRn}\phi(x, g(\omega_n |x|^n))\, dx
\end{equation}
provided that 
\begin{equation}\label{optweak15} 
\|g(\omega_n |\cdot |^n)\|_{L^{\phi(\cdot)}(\RRn)}\leq 1.
\end{equation}
 By the monotone convergence theorem for Lebesgue integrals and the Fatou property of Musielak-Orlicz norms, replacing, for $t>0$, any measurable function $g: [0, \infty) \to [0, \infty)$ with the function $g_t :  [0, \infty) \to [0, \infty)$ given by 
\begin{align}\label{gt}
g_t(s)= \min\{t, g(s)\}\chi_{(0,t)}(s) \qquad \text{ for $s\geq 0$,}
\end{align}
 and passing to the limit as $t\to\infty$, tell us that inequality \eqref{optweak5} continues to hold for every nonnegative measurable function $g$ fulfilling inequality \eqref{optweak15}.
\\ Assume that $Tg(0)=\infty$, the case when $Tg(0)<\infty$ requiring just minor variants.
 Let $\{r_j\}$ be a non-increasing sequence in $[0, \infty)$ such that
$$Tg(r_j)= 2^j \qquad \text{for $j \in \mathbb Z$.}$$
Define the sequence of functions $\{g_j\}$ by
\begin{align}\label{optweak7}
g_j = g \chi_{(\omega_n r_j^n, \omega_n r_{j-1}^n)}
\end{align}
for $j \in \mathbb Z$. Inasmuch as
\begin{align}\label{optweak8}
Tg(r) \leq Tg(r_j) \leq 2^j \qquad \text{for $r \in (r_j, r_{j-1})$,}
\end{align}
one has that
\begin{align}\label{optweak9}
\int_{\RRn}\vartheta \Big(x,\frac{cTg(|x|)}4\Big)\, dx  = \sum_{j\in \mathbb Z}\int_{\{x: r_j < |x|<r_{j-1}\}}\vartheta \Big(x,\frac{cTg(|x|)}4\Big)\, dx& \leq  \sum_{j\in \mathbb Z}\int_{\{x: r_j < |x|<r_{j-1}\}}\vartheta \Big(x,\frac{c2^j}4\Big)\, dx 
\\ \nonumber & = \sum_{j\in \mathbb Z}\int_{\{x: r_j < |x|<r_{j-1}\}}\vartheta (x,c 2^{j-2})\, dx. 
\end{align}
Assume that $r_j <|x|<r_{j-1}$ for some $j \in \mathbb Z$. Thus,
\begin{align}\label{optweak10}
Tg_{j-1}(|x|)&\geq \int_{\omega _n r_{j-1}^n}^\infty g_{j-1}(s) s^{-\frac{n-\alpha}{n}}\, ds= 
 \int_{\omega _n r_{j-1}^n}^\infty g(s) s^{-\frac{n-\alpha}{n}}\chi_{(\omega_n r_{j-1}^n, \omega_n r_{j-2}^n)}(s)\, ds
\\ \nonumber & =  \int_{\omega _n r_{j-1}^n}^{\omega _n r_{j-2}^n} g(s) s^{-\frac{n-\alpha}{n}}\, ds = 
Tg(r_{j-1})- Tg(r_{j-2})= 2^{j-2}.
\end{align}
Hence,
\begin{align}\label{optweak11}
\{x: r_j <|x|<r_{j-1}\} \subset \{x: Tg_{j-1}(|x|)>2^{j-2}\}.
\end{align}
Coupling the latter equation with inequality  \eqref{optweak5}, applied with $g$ replaced by $g_{j-1}$, tells us that
\begin{align}\label{optweak12}
\int_{\{x: r_j <|x|<r_{j-1}\} }\vartheta (x,c2^{j-2})\, dx \leq  \int_{\{x: Tg_{j-1}(|x|)>2^{j-2}\}}\vartheta (x,c2^{j-2})\, dx\leq  \int_{\RRn}\phi(x, g_{j-1}(\omega_n |x|^n))\, dx.
\end{align}
Observe that the application of inequality  \eqref{optweak5} is legitimate, since $g_{j-1} \leq g$, whence $\|g_{j-1}(\omega_n |\cdot|^n)\|_{L^{\phi(\cdot)}(\RRn)}\leq 1$ if $g$ satisfies condition \eqref{optweak15}. From inequalities \eqref{optweak9} and \eqref{optweak12} we deduce that 
\begin{align}\label{optweak16}
\int_{\RRn}\vartheta \Big(x,\frac{cTg(|x|)}4\Big)\, dx  \leq  \sum_{j\in \mathbb Z} \int_{\RRn}\phi(x, g_{j-1}(\omega_n |x|^n))\, dx &=  \sum_{j\in \mathbb Z}  \int_{\{x: r_j < |x|<r_{j-1}\}}\phi(x, g(\omega_n |x|^n))\, dx \\ \nonumber &= \int_{\RRn}\phi(x, g(\omega_n |x|^n))\, dx
\end{align}
for every $g$ fulfilling condition \eqref{optweak15}. Applying the latter inequality with $g(\omega_n |x|^n)$ replaced by $\frac{g(\omega_n |x|^n)}{\|g(\omega_n |\cdot|^n)\|_{L^{\phi(\cdot)}(\RRn)}}$ tells us that
\begin{align}\label{optweak17}
\bigg\| \int_{\omega _n |x|^n}^\infty g(s) s^{-\frac{n-\alpha}{n}}\, ds\bigg\|_{L^{\vartheta (\cdot)}(\RRn)} \leq\frac 4c \|g(\omega_n |x|^n)\|_{L^{\phi(\cdot)}(\RRn)}
\end{align}
for every $g$ such that $g(\omega_n |x|^n) \in L^{\phi(\cdot)}(\RRn)$. Owing to inequality \eqref{theta2},
\begin{align}\label{optweak18}
 \|g(\omega_n |x|^n)\|_{L^{\phix}(\RRn)} \leq  \|g(\omega_n |x|^n)\|_{L^{A}(\RRn)} = \|g\|_{L^{A}(0,\infty)}.
\end{align}
On the other hand, if  $g$ vanishes in $(0, \omega_n)$, then
\begin{align}\label{optweak19}
\bigg\| \int_{\omega _n |x|^n}^\infty g(s) s^{-\frac{n-\alpha}{n}}\, ds\bigg\|_{L^{\vartheta (\cdot)}(\RRn)} & \geq 
\bigg\| \chi_{B(0,1)}(x)\int_{\omega_n |x|^n}^\infty g(s)s^{-\frac{n-\alpha}{n}}\, ds  \bigg\|_{L^{\vartheta (\cdot)}(\RRn)}\\ \nonumber & = \| \chi_{B(0,1)}\|_{L^{\vartheta (\cdot)}(\RRn)}\int_{\omega_n}^\infty g(s) s^{-\frac{n-\alpha}{n}}\, ds.
\end{align}
Combining  inequalities \eqref{optweak17} -- \eqref{optweak19} implies that
\begin{align}\label{optweak20}
\sup _g \frac{ \int_{\omega_n}^\infty g(s) s^{-\frac{n-\alpha}{n}}\, ds }{\|g (s)\|_{L^{A}(\omega_n, \infty)}} \leq \frac{c}{\| \chi_{B(0,1)}\|_{L^{\vartheta (\cdot)}(\RRn)}}
\end{align}
for some constant $c$ and for every  nonnegative function $g\in L^{A}(\omega_n, \infty)$.
Thanks to  inequalities \eqref{optweak20} and  \eqref{revholder},  applied wtih $\phi$ replaced with $A$,
\begin{align}\label{optweak21}
\big\| s^{-\frac{n-\alpha}{n}}\big\|_{L^{\widetilde A}(\omega_n, \infty)} < \infty.
\end{align}
By a chain analogous to (and, in fact, simpler than) \eqref{51},  inequality \eqref{optweak21} is equivalent to
\begin{equation}\label{optweak22}
\int_0\frac{\widetilde A(t)}{t^{1+\frac n{n-\alpha}}}\, dt <\infty,
\end{equation}
which, by \cite[Lemma 2.3]{cianchi_ibero},  is in turn equivalent to
\begin{equation}\label{optweak23}
\int_0\bigg(\frac{t}{A(t)}\bigg)^{\frac \alpha{n-\alpha}}\, dt <\infty.
\end{equation}
Inequality \eqref{conv0a} hence follows, owing to equation  \eqref{theta1}.

\end{proof}

\section{Proof of the Sobolev inequalities}\label{sec:proof}

The link between the weak-type inequality for Riesz potentials from the previous section and the Sobolev inequalities of Theorems \ref{thm:main} and \ref{thm:main'} is provided by the representation formula which is the subject of the next lemma. The formula is standard for compactly supported functions. The punctum of this lemma is the broader class of admissible functions. 

\begin{lemma}\label{lemma30} Assume that $u \in W^{1,1}_{\rm loc}(\RRn)$ is such that
\begin{equation} \label{june3}
 |\{|u|>t\}|<\infty \qquad \text{for $t>0$,}
\end{equation}
and 
\begin{equation}\label{july100}
\int_{\RRn \setminus B(0,1)}\frac{|\nabla u(x)|}{|x|^{n-1}}\, dx <\infty.
\end{equation}
Then,
\begin{align}\label{170bis}
u(x) = \frac 1{n \omega_n } \int_{\RRn } \frac{\nabla u(y)\cdot (x-y)}{|x-y|^n}\, dy
\quad \text{for a.e. $x\in \RRn$.} 
\end{align}
Here, the dot $\lq\lq \cdot"$ stands for scalar product in $\RRn$.
\end{lemma}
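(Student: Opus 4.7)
The plan is to reduce the representation to a one-dimensional integration along rays emanating from $x$. Recall that for a smooth compactly supported function $\phi$ the identity $\phi(x) = -\int_0^\infty \partial_r \phi(x+r\nu)\,dr$ holds for every $\nu \in \mathbb S^{n-1}$, and averaging over the sphere yields exactly formula \eqref{170bis}. The entire issue for a general admissible $u$ is thus to verify that this radial identity persists along almost every ray, which amounts to (i) absolute convergence of the radial integral and (ii) the decay $u(x+r\nu)\to 0$ as $r\to\infty$.

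The first step would combine $|\nabla u|\in L^1_{\mathrm{loc}}(\RRn)$ with hypothesis \eqref{july100} and Fubini's theorem to show that, for a.e.\ $x \in \RRn$,
\begin{equation*}
\int_{\RRn}\frac{|\nabla u(y)|}{|x-y|^{n-1}}\,dy<\infty.
\end{equation*}
Passing to polar coordinates centered at $x$ this rewrites as $\int_{\mathbb S^{n-1}}\int_0^\infty |\nabla u(x+r\nu)|\,dr\,d\mathcal H^{n-1}(\nu)<\infty$, so for a.e.\ $\nu \in \mathbb S^{n-1}$ the gradient is absolutely integrable along the ray. Exploiting the absolute continuity on a.e.\ line of the precise representative of $u \in W^{1,1}_{\mathrm{loc}}$, I conclude that $r \mapsto u(x+r\nu)$ is absolutely continuous on $[0,\infty)$ for a.e.\ $(x,\nu)$ and that the limit $L(x,\nu):=\lim_{r\to\infty}u(x+r\nu)$ exists and equals $u(x)+\int_0^\infty \nabla u(x+r\nu)\cdot\nu\,dr$.

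The second step would use \eqref{june3} to force $L(x,\nu)=0$. Writing $|\{|u|>t\}|<\infty$ in polar coordinates around $x$ gives $\int_0^\infty \chi_{\{|u(x+r\nu)|>t\}}\,r^{n-1}\,dr<\infty$ for a.e.\ $\nu$ and every $t>0$. Taking a countable sequence $t_k\downarrow 0$ and unioning the exceptional null sets, I infer that for a.e.\ $(x,\nu)$ the set $\{r>1 : |u(x+r\nu)|>t_k\}$ has finite one-dimensional Lebesgue measure for every $k$; otherwise the corresponding weighted integral would diverge. Hence $\liminf_{r\to\infty}|u(x+r\nu)|\leq t_k$ for every $k$, so $\liminf=0$, and combining with the existence of the limit from step one yields $\lim_{r\to\infty}u(x+r\nu)=0$. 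The fundamental theorem of calculus then gives
\begin{equation*}
u(x) = -\int_0^\infty \nabla u(x+r\nu)\cdot\nu\,dr \qquad \text{for a.e.\ }(x,\nu).
\end{equation*}

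Finally, averaging this identity over $\nu \in \mathbb S^{n-1}$ and returning to Cartesian coordinates via $dy = r^{n-1}\,dr\,d\mathcal H^{n-1}(\nu)$ with $\nu=(y-x)/|y-x|$ produces formula \eqref{170bis}. The delicate point is the interplay of the two hypotheses: \eqref{july100} alone provides existence of the radial limit but not its value, while \eqref{june3} alone only yields $\liminf=0$; neither suffices in isolation, and it is precisely their combination that forces $\lim=0$ along almost every ray.
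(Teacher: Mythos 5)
Your proof is correct and follows essentially the same strategy as the paper's: reduce to a one-dimensional FTC identity along rays from $x$, establish integrability of $\nabla u\cdot\nu$ along a.e.\ ray (near field from local absolute continuity, far field from \eqref{july100}), show that assumption \eqref{june3} forces $\liminf_{r\to\infty}|u(x+r\nu)|=0$ for a.e.\ $\nu$, and average over the sphere. The only cosmetic differences are that you obtain $\liminf = 0$ by a direct $t_k\downarrow 0$ argument with a countable union of exceptional null sets, whereas the paper argues by contradiction with the exhaustion $E=\bigcup_j E_j$; and you package the gradient integrability as a single Fubini statement $\int_{\RRn}|\nabla u(y)|\,|x-y|^{1-n}\,dy<\infty$ for a.e.\ $x$, whereas the paper splits the radial integral at $r=1$. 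Neither change alters the substance of the argument.
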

\begin{proof} To begin with,  
 recall that, since $u \in W^{1,1}_{\rm loc}(\RRn)$,  for a.e. $x \in \RRn$ the function 
\begin{equation}\label{june17}
[0, \infty) \ni t \mapsto u (x+ \nu r)
\end{equation}
is   absolutely continuous, for $\mathcal H^{n-1}$-a.e. $\nu \in \mathbb S^{n-1}$, on every interval $[0, \ell]$ with $\ell >0$. Furthermore,   
\begin{equation}\label{june13}
\frac{d}{dr} u (x+ \nu r) = \nabla u (x+ \nu r) \cdot \nu \qquad \text{for a.e. $r>0$,}
\end{equation}
see e.g, \cite[??]{Ziemer}. 
 Fix any $x \in \RRn$ for which this property holds. 
We claim that 
\begin{equation} \label{june4}
\liminf _{r \to \infty} |u(x+\nu r)|= 0 \quad \text{for $\mathcal H^{n-1}$-a.e. $\nu \in \mathbb S^{n-1}$.}
\end{equation}
To verify this claim, set
\begin{equation} \label{june5}
E= \{\nu \in \mathbb S^{n-1}: \liminf _{r \to \infty} |u(x+\nu r)|> 0\},
\end{equation}
and 
$$E_j=  \{\nu \in \mathbb S^{n-1}: \liminf _{r \to \infty} |u(x+\nu r)|> 1/j\}$$
for $j \in \mathbb N$.
Assume, by contradiction, that
$
\mathcal H^{n-1}(E)>0
$.
Since
$
E= \bigcup_{j\in \mathbb N} E_j
$,
one hence infers that
\begin{equation}\label{june8}
0<\mathcal H^{n-1}(E) = \lim_{j\to \infty} \mathcal H^{n-1}(E_j).
\end{equation}
Therefore, there exists $j_0 \in \mathbb N$  such that
 \begin{equation}\label{june9}
 \mathcal H^{n-1}\big( E_{j_0}\big)>0.
\end{equation}
In particular, this implies that
\begin{equation}\label{june10}
\int _0^\infty \chi_{ \{|u(x+\nu r)|> 1/{j_0}\}}(\nu, r) r^{n-1}\, dr = \infty
\end{equation}
for every $\nu \in E_{j_0}$. Thus,
\begin{align}\label{june11}
 |\{|u|>1/{j_0}\}| & = \int_{\{|u|>1/{j_0}\}}dy = \int_{\mathbb S^{n-1}} \int_0^\infty \chi_{ \{|u(x+\nu r)|> 1/{j_0}\}}(\nu, r) r^{n-1}\, dr\, d\mathcal H^{n-1}(\nu) \\ \nonumber & \geq  \int_{ E_{j_0}} \int_0^\infty \chi_{ \{|u(x+\nu r)|> 1/{j_0}\}}(\nu, r) r^{n-1}\, dr\, d\mathcal H^{n-1}(\nu)=\infty,
\end{align}
and this contradicts assumption \eqref{june3}. Property \eqref{june4} is therefore established.
\\ Next, observe that
\begin{equation}\label{june12}
\int_0^\infty |\nabla u (x+ \nu r) \cdot \nu|\, dr <\infty \qquad   \text{for $\mathcal H^{n-1}$-a.e. $\nu \in \mathbb S^{n-1}$.}
\end{equation}
In order to prove inequality \eqref{june12}, notice the following chain:
\begin{align}\label{june14}
\int_{\mathbb S^{n-1}}\int_1^\infty  |\nabla u (x+ \nu r) \cdot \nu|\, dr \,  d\mathcal H^{n-1}(\nu)& = 
\int_{\mathbb S^{n-1}}\int_1^\infty \frac{ |\nabla u (x+ \nu r) \cdot \nu|}{r^{n-1}}\, r^{n-1} \,dr \,  d\mathcal H^{n-1}(\nu)
\\ \nonumber & = \int_{\RRn \setminus B_1(x)} \frac{|\nabla u(y)\cdot (y-x)|}{|y-x|^n}\, dy \leq \int_{\RRn \setminus B_1(x)} \frac{|\nabla u(y)|}{|y-x|^{n-1}}\, dy.
\end{align}
The last integral converges, thanks to assumption \eqref{july100}. Thereby, 
\begin{equation}\label{june15}
\int_1^\infty  |\nabla u (x+ \nu r) \cdot \nu|\, dr < \infty \quad \text{for $\mathcal H^{n-1}$-a.e. $\nu \in \mathbb S^{n-1}$.}
\end{equation}
On the other hand,
\begin{equation}\label{june16}
\int_0^1  |\nabla u (x+ \nu r) \cdot \nu|\, dr < \infty \quad \text{for $\mathcal H^{n-1}$-a.e. $\nu \in \mathbb S^{n-1}$,}
\end{equation}
since the function in \eqref{june17} is   absolutely continuous on $[0, 1]$ and property \eqref{june13} holds. Coupling inequality \eqref{june15} with \eqref{june16} yields \eqref{june12}.
\\
Now, fix any $\nu \in \mathbb S^{n-1}$ such that equations \eqref{june4} and  \eqref{june12} hold. Therefore, there exists a sequence $\{r_j\}$ such that $\lim _{j \to \infty} r_j =\infty$ and $\lim_{j\to \infty}  u(x+\nu r_j)= 0$.
Inasmuch as 
\begin{equation}\label{june20}
u(x+\nu r_j) - u(x) = \int_0^{r_j} \nabla u (x+ \nu r) \cdot \nu\, dr,
\end{equation}
passing to the limit as $j \to \infty$ in the latter equation yields
\begin{equation}\label{june21}
u(x) = - \int_0^{\infty} \nabla u (x+ \nu r) \cdot \nu\, dr.
\end{equation}
Observe that the passage to the limit in the integral is legitimate, thanks to the dominated convergence theorem and to  equation \eqref{june12}. Integrating identity \eqref{june21}, which holds for $\mathcal H^{n-1}$-a.e. $\nu \in \mathbb S^{n-1}$, with respect to $\nu$ over  $ \mathbb S^{n-1}$ yields
\begin{equation}\label{june22}
n \omega_n u(x) = - \int_{\mathbb S^{n-1}} \int_0^{\infty} \nabla u (x+ \nu r) \cdot \nu\, dr\, d\mathcal H^{n-1}(\nu) = - \int_{\RRn } \frac{\nabla u(y)\cdot (y-x)}{|y-x|^n}\, dy.
\end{equation}
Equation \eqref{170bis} is thus established.
\end{proof}

We are now in a position to prove our Sobolev inequalities. We begin with the inequality in $\RRn$.

\begin{proof}[Proof of Theorem \ref{thm:main}]  Proposition \ref{replacephi} enables us to assume, without loss of generality,  that $\phi= \overline \phi$.
\\
Our assumptions on $\phi$ and $u$ ensure that the latter fulfills the hypotheses of Lemma \ref{lemma30}. In particular, condition \eqref{july100} is satisfied owing to the H\"older inequality \eqref{holder},  inequality \eqref{49} and the fact that $|\nabla u| \in L^{\phix}(\RRn)$. 
From equation
\eqref{170bis} one thus infers that
\begin{align}\label{170}
|u(x)| < \kappa I_1(|\nabla u|) (x) \quad \text{for a.e. $x\in \RRn$,}
\end{align}
for some constant $\kappa = \kappa (n)$.
Thereby, 
if, in addition,
\begin{equation}\label{170'}
\|\nabla u\|_{L^{\phi(\cdot)}(\RRn)} \leq 1,
\end{equation}
then Lemma \ref{lemma12}  yields:
\begin{align}\label{176''}
\int_{\{|u|\geq t\}}\phi_n(x, c't)\, dx = \int_{\{|u|\geq t\}} \phi(x, H^{-1}_n(x, c't))\, dx & \leq  \int_{\{ I_1(|\nabla u|)\geq t/\kappa \}} \phi(x, H^{-1}_n(x, c't))\, dx\\ \nonumber & \leq \int_{\RRn}\phi(x, |\nabla u|)\, dx,
\end{align}
where $c'= \frac c\kappa$, and $c$ is the constant from inequality \eqref{176'}.
Define the function $u_j : \RRn \to  [0, \infty)$ as
$$u_j = \max\{\min \{|u|-2^j, 2^j\},0\},$$
and the set
$$U_j =\{x\in \RRn: 2^j<|u(x)|\leq 2^{j+1}\}$$
for $j \in \mathbb Z$. Observe that
$$U _j \subset  \{x\in \RRn: u_{j-1}(x) = 2^{j-1}\}$$
for $j \in \mathbb Z$.  An application of inequality \eqref{176''}, with $u$ replaced with $u_{j-1}$, enables one to deduce that
\begin{align}\label{202}
\int_{\RRn} \phi_n(x, \tfrac {c'}4 |u|)\, dx & = \sum_{j\in \mathbb Z} \int_{U_j} \phi_n(x,  \tfrac {c'}4|u|)\, dx \leq  \sum_{j\in \mathbb Z}\int_{U_j} \phi_n(x, c'2^{j-1})\, dx 
\\ \nonumber &   \leq\sum_{j\in \mathbb Z} \int_{\{ u_{j-1}  \geq 2^{j-1}\}} \phi_n(x, c' 2^{j-1})\, dx \\ \nonumber &  \leq \sum_{j\in \mathbb Z}\int_{\RRn}\phi(x, |\nabla u_j|)\, dx   = \int_{\RRn}\phi(x, |\nabla u|)\, dx \leq 1,
\end{align}
where the last inequality holds thanks to assumption \eqref{170'}.
Inequality \eqref{main1} follows via  inequality \eqref{202}, applied with $u$ replaced with $\frac u{\|\nabla u\|_{L^{\phi(\cdot)}(\RRn)}}$.
\end{proof}

\begin{proof}[Proof of Proposition \ref{nec}] We may asssume, as in the proof of Theorem \ref{thm:main}, that $\phi=\overline \phi$. Let $A$ be the Young function defined as in \eqref{theta}.
Consider trial functions $u$ in inequality \eqref{nec1} of the form
$$u(x) = \int_{\omega_n |x|^n}^\infty g(s)s^{-\frac 1{n'}}\, ds \quad\quad \text{for $x\in \RRn$,}$$
where  $g: [0, \infty) \to [0, \infty)$ is a measurable bounded function, with bounded support contained in $[\omega_n, \infty)$. Thus $u$ is weakly differentiable, and 
$$|\nabla u(x)| = c g(\omega_n |x|^n) \quad \text{for a.e. $x\in \RRn$,}$$
for some constant $c=c(n)$.
\\
Inequalities \eqref{nec1}  and \eqref{theta2} yield:
\begin{equation}\label{351}
\bigg\| \int_{\omega_n |x|^n}^\infty g(s)s^{-\frac 1{n'}}\, ds  \bigg\|_{Y(\RRn)}\leq c \|g(\omega_n |x|^n)\|_{L^{A}(\RRn)}= c \|g (s)\|_{L^{A}(\omega_n, \infty)}
\end{equation}
for some constant $c$. Thanks to property \eqref{lattice},
\begin{align}\label{352}
\bigg\| \int_{\omega_n |x|^n}^\infty g(s)s^{-\frac 1{n'}}\, ds  \bigg\|_{Y(\RRn)}& \geq\bigg\| \chi_{B(0,1)}(x)\int_{\omega_n |x|^n}^\infty g(s)s^{-\frac 1{n'}}\, ds  \bigg\|_{Y(\RRn)}\\ \nonumber & = \| \chi_{B(0,1)}\|_{Y(\RRn)}\int_{\omega_n}^\infty g(s) s^{-\frac 1{n'}}\, ds.
\end{align}
Coupling inequality \eqref{351} with \eqref{352} implies that
\begin{align}\label{352'}
\sup _g \frac{ \int_{\omega_n}^\infty g(s) s^{-\frac 1{n'}}\, ds }{\|g (s)\|_{L^{A}(\omega_n, \infty)}} \leq \frac{c}{\| \chi_{B(0,1)}\|_{Y(\RRn)}}.
\end{align}
By the Fatou property of Luxemburg norms, replacing, for $t>0$, any measurable function $g: [0, \infty) \to [0, \infty)$ with the function $g_t$ defined as in \eqref{gt}, and passing to the limit as $t\to\infty$, tell us that inequality \eqref{352'} continues to hold for every $g \in L^{A}(\omega_n, \infty)$. The left-hand side of inequality \eqref{352'} agrees with the left-hand side of inequality \eqref{optweak20}, with $\alpha =1$. Inequality \eqref{conv0} hence follows, since, as shown in the proof of Theorem \ref{lemma12}, inequality \eqref{optweak20} implies \eqref{conv0a} for every $\alpha \in (0, n)$.
\end{proof}

We conclude with our Sobolev inequalities on domains. Apart from the use of the simplified function $\phi_{n,\diamond}$ instead of  $\phi_{n}$, Theorem \ref{thm:mainzero} is just a consequence of Theorem \ref{thm:main}.  The justification of the replacement of  $\phi_{n,\diamond}$ by $\phi_{n}$
is the same as at the beginning of the proof of Theorem \ref{thm:main'} below.

\begin{proof}[Proof of Theorem \ref{thm:main'}] Owing to equation \eqref{july19} of Proposition \ref{replacephiomega}, 
 we may assume that $\phi=\widehat \phi$, the function associated with $\phi$ as in formula \eqref{july11}.
By the same proposition, the function $\phi$, which we are assuming to agree with $\widehat \phi$, is a normalized generalized Young function satisfying assumption \eqref{conv0}. Hence, by Theorem \ref{lemma12}  and Remark \ref{rem-oct5}, 
there exists a positive constant  $c=c(n, \alpha, \beta)$  such that
\begin{equation}\label{176'bis}
\int_{\{|I_1 f|>t\}}\phi_{n, \diamond}(x,ct)\, dx \leq \int_{\RRn}  \phi(x, |f(x)|)\, dx \quad \text{for $t \geq 0$,}
\end{equation}
if $\|f\|_{L^{\phi(\cdot)}(\RRn)}\leq 1$.
\\ {\color{black} As a preliminary, observe that, owing to property \eqref{nov151}, 
\begin{align}
    \label{nov158}
    V^{1,\phix}(\Omega) \embedding V^{1,1}(\Omega).
\end{align}
 Moreover, since $\Omega$ is a John domain, one classically has that
\begin{align}
    \label{nov172}
\|u\|_{L^1(\Omega)} \leq c \big(\|u\|_{L^1(G)} + \|\nabla u\|_{L^1(\Omega)}\big)\end{align}
for any open set $G$ such that $\overline G \subset \Omega$ and some constant $c=c(\Omega, G)$.
Hence,  
\begin{align}\label{nov171}
    V^{1,\phix}(\Omega) \embedding L^1(\Omega),
\end{align}
and
the mean value $u_\Omega$ is well defined for every $u \in V^{1,\phix}(\Omega)$.}
\\ In order to prove inequality \eqref{main1'}, it suffices to show that there exist positive constants $c_1=c_1(\beta, \Omega)$ and $c_2=c_(\beta, \Omega)$,  such that
\begin{equation}\label{303}
\int_\Omega \phi_{n, \diamond}(x, c_1|u-u_\Omega|)\, dx \leq c_2,
\end{equation}
if 
\begin{equation}\label{304}
\|\nabla u\|_{L^{\phi (\cdot)}(\Omega)} \leq \frac 1{1+2\|1\|_{L^{\widetilde \phi (\cdot)}(\Omega)}}.
\end{equation}
Let $u_j : \Omega \to [0, \infty)$ be the function defined by
$$u_j = \max\{\min \{|u-u_\Omega|-2^j, 2^j\},0\},$$
and let
$$\Omega_j =\{x\in \Omega: 2^j<|u(x)-u_\Omega|\leq 2^{j+1}\}$$
for $j \in \mathbb Z$.
We have  that
\begin{align}\label{305}
\Omega_j \subset  \{x\in \Omega: u_{j-1}(x)=2^{j-1}\}
\end{align}
for $j \in \mathbb Z$. 
 By  \cite[Lemma 8.2.1]{DieHarHasRuz17}, there exist a ball  $B\subset \Omega$ and a constant $\kappa = \kappa (n)$ such that
\begin{align}\label{306}
|u_j(x) - (u_j)_B| < \kappa  I_1(|\nabla u_j|) (x) \quad \text{for a.e. $x\in \Omega$,}
\end{align}
for $j \in \mathbb Z$. 
Therefore, 
\begin{align}\label{307}
u_j(x)&  \leq |u_j(x) - (u_j)_B|  + (u_j)_B< \kappa  I_1(|\nabla u_j|) (x) +  \dashint_{B}|u-u_\Omega|\, dy
\\ \nonumber &  \leq \kappa  I_1(|\nabla u_j|) (x) + c' \int_\Omega |u-u_\Omega|\, dy \leq  \kappa  I_1(|\nabla u_j|) (x) +c'' \int_\Omega|\nabla  u|\, dy 
\\ \nonumber &  \leq \kappa  I_1(|\nabla u_j|) (x) + 2c'' \|1\|_{L^{\widetilde \phi (\cdot)}(\Omega)}\|\nabla u\|_{L^{\phi (\cdot)}(\Omega)} \leq c'''\big( I_1(|\nabla u_j|) (x) +1\big) \quad \text{for a.e. $x \in \Omega$,}
\end{align}
for some constants $c', c'', c'''$ depending on $\Omega$, and for $j \in \mathbb Z$. Notice that here we made use of the Poincar\'e inequality in $W^{1,1}(\Omega)$.
\\ Denote by $j_0$ the largest index $j$ such that $2^{j-2} \leq c'''$. Let $c$ be the constant from inequality \eqref{176'bis}, and set 
\begin{equation}\label{c3}
\overline c= \frac 1{c'''} \max\{1,c\}.
\end{equation}
%
Owing to equation \eqref{305} and inequality \eqref{176'bis} applied with $f= |\nabla u|$, the following chain holds:
\begin{align}\label{308}
\int_\Omega \phi_{n, \diamond}(x, \tfrac {\overline c} 8 |u-u_\Omega|)\, dx & = \sum_{j\in \mathbb Z}\int_{\Omega_j} \phi_{n, \diamond}(x, \tfrac {\overline c}8 |u-u_\Omega|)\, dx \leq  \sum_{j\in \mathbb Z}\int_{\Omega_j}\phi_{n, \diamond}(x, \overline c 2^{j-2})\, dx
\\ \nonumber & \leq \sum_{j\in \mathbb Z}\int_{\{u_{j-1} \geq 2^{j-1}\}} \phi_{n, \diamond}(x, \overline c 2^{j-2})\, dx \leq 
\sum_{j\in \mathbb Z}\int_{\{c'''(I_1(|\nabla u_{j-1}|) +1) \geq 2^{j-1}\}}\phi_{n, \diamond}(x, \overline c 2^{j-2})\, dx 
\\ \nonumber & \leq 
\sum_{j\in \mathbb Z}\int_{\{c'''  I_1(|\nabla u_{j-1}|) \geq 2^{j-2}\}} \phi_{n, \diamond}(x, \overline c 2^{j-2})\, dx 
+ \sum_{j \leq j_0}\int_\Omega \phi_{n, \diamond}(x, \overline c 2^{j-2})\, dx
\\ \nonumber & \leq 
\sum_{j\in \mathbb Z}\int_{\Omega} \phi (x,  |\nabla u_{j-1}|)\, dx 
+  \sum_{ j \leq j_0}\int_{\Omega}\phi_{n, \diamond}(x, \overline c 2^{j-2})\, dx.
\end{align}
Observe that the choice \eqref{c3} plays a role in the application of inequality \eqref{176'bis}.
Plainly,
\begin{equation}\label{310}
\sum_{j\in \mathbb Z}\int_{\Omega} \phi (x,  |\nabla u_{j-1}|)\, dx = \int_\Omega  \phi (x,  |\nabla u|)\, dx \leq 1
\end{equation}
On the other hand, since  $\phi(x,\cdot)$ is a Young function such that $\phi (x, 1) =1$,
$$\phi (x, t) \leq t \quad \text{if $0 \leq t \leq 1$,}$$
for  $x\in \RRn$.
Thus, $H_{n, \diamond}^{-1}(x,t) \leq t^{n'} \leq 1$ if $0 \leq t \leq 1$. As a consequence,
$$\phi_{n, \diamond} (x,t) =  \phi (x, H_{n, \diamond}^{-1}(x,t)) \leq \phi(x, t^{n'}) \leq t^{n'} \qquad \text{if $0 \leq t \leq 1$, }$$
for  $x\in \RRn$.
Hence, owing to equation \eqref{c3}
\begin{align}\label{309}
 \sum_{  c'''  \,  \geq 2^{j-2}}\int_{\Omega} \phi_{n, \diamond}(x, \overline c 2^{j-2})\, dx
\leq  \overline c^{n'} |\Omega| \sum_{j\leq j_0} 2^{n'(j-2)} <\infty.
\end{align}
Combining inequalities \eqref{308} -- \eqref{309} yields \eqref{303}.
\\{\color{black} Owing to embedding \eqref{nov158}, the embedding 
$$V^{1,\phix}(\Omega) \embedding L^{\phi_{n,\diamond}}(\Omega)$$
is a consequence of inequality \eqref{main1'}
and of  embedding \eqref{nov171}.
}
\end{proof}

\begin{proof}[Proof of Theorem~\ref{thm:maincompact}]
    It suffices to show that, if  $\{u_k\}$ is a sequence in $V^{1,\phi(\cdot)}(\Omega)$ such that $\norm{u_k}_{V^{1,\phi(\cdot)}(\Omega)} \le 1$, then it admits a Cauchy subsequence  in $L^{\vartheta(\cdot)}(\Omega)$.
    Owing to embeddings \eqref{nov158} and \eqref{nov171}, the sequence $\{u_k\}$ is also bounded in $W^{1,1}(\Omega)$. Since the embedding $W^{1,1}(\Omega) \embedding L^1(\Omega)$ is compact, there exists a  subsequence, still denoted by  $\{u_k\}$ which converges in $L^1(\Omega)$ and a.e. in $\Omega$ to some function $u \in  L^1(\Omega)$. We claim that $u_k \to u$ also in $L^{\vartheta(\cdot)}(\Omega)$. This claim will follow if we show that, for every $\gamma >0$,
    \begin{align}
\label{eq:maincompact1}
        \limsup_{k\to \infty} \int_\Omega \vartheta\bigg(x,\frac{\abs{u_k-u}}{\gamma}\bigg)\,dx \le 1.
    \end{align}
    By Theorem~\ref{thm:main'}, there exists a constant $K$ such that $\sup_k \norm{u_k}_{L^{\phi_{n,\diamond}(\cdot)}(\Omega)} \leq K$.  Since  $u_k \to u$ a.e. in $\Omega$, \cite[Theorem~2.3.17]{DieHarHasRuz17} ensures that   $\norm{u}_{L^{\phi_{n,\diamond}(\cdot)}(\Omega)} \le K$ as well.
    The assumption that
  $\vartheta$ grows essentially more slowly than $\phi_{n,\diamond}$ near infinity entails that there exists $t_0>0$ such that
    \begin{align*}
        \vartheta\bigg(x,\frac{t}{\gamma}\bigg) \le \phi_{n,\diamond}\bigg(x,\frac{t}{2K}\bigg) \qquad \text{for  $t \ge t_0$ and a.e. $x\in \Omega$.}
    \end{align*}
    Hence,
\begin{align}\label{nov163}
      \int_\Omega\vartheta\bigg(x,\frac{\abs{u_k-u}}{\gamma}\bigg)\,dx
        &\le \int_{\set{\abs{u_k-u}\le t_0}} \vartheta\bigg(x,\frac{\abs{u_k-u}}{\gamma} \bigg) \,dx
        + \int_{\set{\abs{u_k-u}> t_0}} \vartheta\bigg(x,\frac{\abs{u_k-u}}{\gamma}\bigg) \,dx
        \\  \nonumber
        &\le  \int_{\set{\abs{u_k-u}\le t_0}} \vartheta\bigg(x,\frac{\abs{u_k-u}}{\gamma} \bigg) \,dx
        + \int_{\set{\abs{u_k-u}> t_0}} \phi_{n,\diamond}\bigg(x,\frac{\abs{u_k-u}}{2K}\bigg) \,dx
    \end{align} 
    for $k \in \mathbb N$.
    Inasmuch $\norm{u_k-u}_{\phi_{n,\diamond}(\dot)(\Omega)} \le 2K$, we have that
\begin{align}\label{nov164}
\int_{\set{\abs{u_k-u}> t_0}} \phi_{n,\diamond}\bigg(x,\frac{\abs{u_k-u}}{2K}\bigg) \,dx \leq 1
    \end{align}
    for $k \in \mathbb N$. On the other hand, $\abs{u_k-u} \to 0$ a.e. in $\Omega$ and 
    $$ \vartheta\bigg(x,\frac{\abs{u_k-u}}{\gamma} \bigg) \leq \vartheta(x,t_0) \quad \text{for a.e. $x\in \Omega$,}$$
    for $k \in \mathbb N$.
    Moreover, owing to assumption \eqref{nov170}, $\vartheta(\cdot,t_0) \in L^1(\Omega)$. Therefore, by the dominated convergence theorem, 
      \begin{align}\label{nov166}
  \lim_{k \to \infty} \int_{\set{\abs{u_k-u}\le t_0}} \vartheta\bigg(x,\frac{\abs{u_k-u}}{\gamma} \bigg) \,dx=0.
    \end{align}
    Equation \eqref{eq:maincompact1} follows from \eqref{nov163}--\eqref{nov166}.
\end{proof}

\bigskip
 \section*{Compliance with Ethical Standards}\label{conflicts}

\smallskip
\par\noindent 
{\bf Funding}. This research was partly funded by:   
\\ (i) Research Project 201758MTR2  of the Italian Ministry of Education, University and
Research (MIUR) Prin 2017 ``Direct and inverse problems for partial differential equations: theoretical aspects and applications'';   
\\ (ii) GNAMPA   of the Italian INdAM - National Institute of High Mathematics (grant number not available).
\\ (iii) Deutsche Forschungsge- meinschaft (DFG, German Research Foundation) – SFB 1283/2 2021 – 317210226.

\bigskip
\par\noindent
{\bf Conflict of Interest}. The authors declare that they have no conflict of interest.

\printbibliography

\end{document}